\def\twocell[#1]{\arrow[#1, dash, phantom, "\Rightarrow"{scale=1.125, yshift=-.4pt, description, allow upside down, sloped, inner sep=0pt}]}
\newtheorem{theorem}{Theorem}[section]
\newtheorem{corollary}[theorem]{Corollary}
\newtheorem{lemma}[theorem]{Lemma}
\newtheorem{proposition}[theorem]{Proposition}
\newtheorem*{introthm}{Main Theorem}
\theoremstyle{definition}
\newtheorem{construction}[theorem]{Construction}
\newtheorem{definition}[theorem]{Definition}
\newtheorem{example}[theorem]{Example}
\newtheorem{remark}[theorem]{Remark}
\newtheorem*{claim*}{Claim}
\newtheorem{warn}[theorem]{Warning}
\newcommand{\qednow}{\pushQED{\qed}\qedhere\popQED}
\newcommand{\notehelper}[3]{\textcolor{#3}{$\blacksquare$}\marginpar{\ifodd\thepage\raggedright\else\raggedleft\fi\color{#3}\tiny \textbf{#2:} #1}}
\DeclareMathOperator{\Bb}{\mathcal{B}}
\DeclareMathOperator{\Cc}{\mathcal{C}}
\DeclareMathOperator{\Dd}{\mathcal{D}}
\DeclareMathOperator{\Ee}{\mathcal{E}}
\newcommand{\Ff}{\mathcal{F}}
\DeclareMathOperator{\Mm}{\mathcal{M}}
\DeclareMathOperator{\Nn}{\mathcal{N}}
\DeclareMathOperator{\Ii}{\mathcal{I}}
\newcommand{\EF}{E\Ff}
\renewcommand{\phi}{\varphi}
\renewcommand{\epsilon}{\varepsilon}
\renewcommand{\S}{{{\mathscr S}}}
\DeclareMathOperator{\Sp}{Sp}
\DeclareMathOperator{\Spc}{Spc}
\DeclareMathOperator{\Fin}{Fin}
\DeclareMathOperator{\Cat}{Cat}
\newcommand{\PrL}{\textup{Pr}^{\textup{L}}}
\DeclareMathOperator{\Hom}{Hom}
\DeclareMathOperator{\iHom}{\ul{\Hom}}
\DeclareMathOperator{\Fun}{Fun}
\DeclareMathOperator{\PSh}{PSh}
\DeclareMathOperator{\CMon}{CMon}
\DeclareMathOperator{\RelCat}{RelCat}
\DeclareMathOperator{\KK}{KK}
\DeclareMathOperator{\sep}{sep}
\DeclareMathOperator{\Ind}{Ind}
\newcommand{\catop}{^{\mathrm{op}}}
\newcommand{\op}{{\textup{op}}}
\renewcommand{\smallint}{{\textstyle\int}}
\newcommand{\essim}{{\textup{ess\,im}}}
\DeclareMathOperator{\unit}{\mathbbm{1}}
\DeclareMathOperator{\colim}{colim}
\DeclareMathOperator{\id}{id}
\DeclareMathOperator{\pr}{pr}
\DeclareMathOperator{\BC}{BC}
\DeclareMathOperator{\Orb}{Orb}
\DeclareMathOperator{\Glo}{Glo}
\DeclareMathOperator{\Nm}{Nm}
\DeclareMathOperator{\Nmadjdual}{\overline{\Nm}}
\newcommand{\Nfree}{N\textup{-free}}
\newcommand{\tors}[1]{#1\textup{-tors}}
\newcommand{\comp}[1]{#1\textup{-comp}}
\newcommand{\free}[1]{#1\textup{-free}}
\newcommand{\fr}{\textup{fr}}
\newcommand{\ulhelper}[2]{\underline{\setbox0=\hbox{$#1#2$}\dp0=1pt \box0\relax}}
\newcommand{\ul}[1]{{\mathpalette\ulhelper{#1}}\hbox{\rule[-2pt]{0pt}{0pt}}}
\newcommand{\finSets}{\mathbb{F}}
\newcommand{\ulfinPsets}{\ul{\finSets}_T^{\Orbital}}
\newcommand{\Orbital}{P}
\newcommand{\blank}{{\textup{--}}}
\newcommand{\tcatUn}[1]{\mathop{\hfuzz=10pt\hbox to 0pt{$\textstyle\bm\int$}\kern.3pt\raise.2pt\hbox to 0pt{$\textstyle\bm\int$}\lower.2pt\hbox to 0pt{$\textstyle\bm\int$}\kern.3pt\hbox to 0pt{$\textstyle\bm\int$}\kern-.1pt\raise.1pt\hbox{\color{white}$\textstyle\int$}}}
\newcommand{\GammaS}{{\Gamma\kern-1.5pt\mathscr S}}
\newcommand{\mySp}{{\mathscr S\kern-2ptp}}
\newcommand{\mathscrGr}{{\mathscr G\kern-1.25ptr}}
\DeclareMathOperator{\gl}{gl}
\DeclareMathOperator{\Span}{Span}
\let\smashp=\wedge
\newcommand{\iso}{\xrightarrow{\;\smash{\raisebox{-0.5ex}{\ensuremath{\scriptstyle\sim}}}\;}}
\newcommand{\pushoutsign}{\hspace{0.2ex}\tikz[baseline=(po.base)]{\draw (0,0) ++ (2.45ex,-1.45ex) -- ++(0ex,1ex) -- ++ (1ex,0ex);\node (po) at (0,0) {\phantom{x}};}}
\newcommand\noloc{
	\nobreak
	\mspace{6mu plus 1mu}
	{:}
	\nonscript\mkern-\thinmuskip
	\mathpunct{}
	\mspace{2mu}
}
\def\nop@r@m{nop@r@m}
\newif\ifbr@cketed
\def\br@cketed{}
\let\@uldcite=\cite
\def\st@rcite#1#2{\@uldcite[#2]{#1}}
\def\@mscite#1{\@ifstar{\st@rcite{#1}}{\ifbr@cketed\@uldcite[\br@cketed]{#1}\else\@uldcite{#1}\fi}}
\renewcommand{\cite}[1][\nop@r@m]{\def\temp{#1}\ifx#1\nop@r@m\relax\br@cketedfalse\else\br@cketedtrue\fi\gdef\br@cketed{#1}\@mscite}
\let\cites=\cite
\title{The Adams isomorphism revisited}
\author{Bastiaan Cnossen}
\address{B.C.: Fakultät für Mathematik, Universität Regensburg, 93040 Regensburg, Germany}
\author{Tobias Lenz}
\address{T.L.:{\hskip0pt minus 1pt} Mathematical{\hskip0pt minus 1pt} Institute,{\hskip0pt minus 1pt} University{\hskip0pt minus 1pt} of{\hskip0pt minus 1pt} Utrecht,{\hskip0pt minus 1pt} Budapestlaan{\hskip0pt minus 1pt} 6,{\hskip0pt minus 1pt} 3584{\hskip0pt minus 1pt} CD{\hskip0pt minus 1pt} Utrecht, The Netherlands}
\author{Sil Linskens}
\address{S.L.:{\hskip0pt minus 1pt} Mathematisches{\hskip0pt minus 1pt} Institut,{\hskip0pt minus 1pt} Rheinische{\hskip0pt minus 1pt} Friedrich-Wilhelms-Universit\"at{\hskip0pt minus 1pt} Bonn,{\hskip0pt minus 1pt} Endenicher{\hskip0pt minus 1pt} Allee{\hskip0pt minus 1pt} 60,{\hskip0pt minus 1pt} 53115{\hskip0pt minus 1pt} Bonn,{\hskip0pt minus 1pt} Germany}
\begin{document}
	\begin{abstract}
		We establish abstract Adams isomorphisms in an arbitrary equivariantly presentable equivariantly semiadditive global category. This encompasses the well-known Adams isomorphism in equivariant stable homotopy theory, and applies more generally in the settings of $G$-Mackey functors, $G$-global homotopy theory, and equivariant Kasparov categories.
	\end{abstract}
	\maketitle

	\begingroup\parskip=0.5\parskip
	\setcounter{tocdepth}{1}
	\tableofcontents
	\endgroup

	\section{Introduction}
	In \cite{Adams1984}, Adams proved a surprising result in equivariant stable homotopy theory: given a finite group $G$ and a finite pointed $G$-CW-complex on which $G$ acts freely away from the basepoint, there is an equivalence between the $G$-orbit spectrum and the (derived) $G$-fixed spectrum of the suspension spectrum $\Sigma^{\infty} X$. This sits in stark contrast to the unstable or na\"ively stable situation, where the $G$-fixed points of $X$ are just trivial unless $G=e$. In \cite{LMS}, Lewis, May, and Steinberger extended this equivalence to all so-called \emph{$G$-free} genuine $G$-spectra, and called it the \textit{Adams isomorphism}. Since then, it has become an indispensable tool in equivariant homotopy theory, for example allowing one to calculate the $G$-equivariant stable homotopy groups of a free $G$-CW complex $X$ simply as the non-equivariant stable homotopy groups of its $G$-orbits $X/G$.

	The goal of the present article is to show that the Adams isomorphism is a purely formal consequence of \textit{equivariant semiadditivity}, an analogue of semiadditivity in the context of parametrized higher category theory \cite{exposeI} introduced by the authors \cite{CLL_Global} building on work of Nardin \cite{nardin2016exposeIV}. Our treatment can be viewed as a highly abstract version of the classical proof of the Adams isomorphism, which is a clever reduction to an application of the \textit{Wirthm\"uller isomorphism} for equivariant spectra. The notion of equivariant semiadditivity is designed to provide an abstract formulation of the Wirthm\"uller isomorphism, and the key observation of this paper is that the Adams isomorphism and its proof admit similar abstract formulations. This in particular provides analogues of the Adams isomorphism in various new settings, like that of $G$-global homotopy theory, $G$-Mackey functors, and equivariant Kasparov categories.

    The categorical framework we will work in is that of \textit{global categories}:\footnote{Throughout this article, we will say `category' for `$\infty$-category.'} families of categories $\Cc\colon \Glo\catop \to \Cat$ indexed by the $(2,1)$-category $\Glo$ of finite groups, group homomorphisms, and conjugations. We will require $\Cc$ to be \textit{equivariantly presentable} in the sense of \cite{CLL_Clefts}, meaning that $\Cc$ factors through the subcategory $\PrL \hookrightarrow \Cat$ of presentable categories and that for all injective group homomorphisms $i\colon H \hookrightarrow G$ the restriction functors $i^*\colon \Cc(G) \to \Cc(H)$ admit left adjoints $i_!\colon \Cc(H) \to \Cc(G)$ satisfying a base change condition. We will further require $\Cc$ to be \textit{equivariantly semiadditive}: each category $\Cc(G)$ is semiadditive in the usual sense and for every injection $i\colon H \hookrightarrow G$ a certain \textit{norm map} $\Nm_i\colon i_! \to i_*$ to the \emph{right} adjoint $i_*$ of $i^*$ is invertible.

    To state the abstract Adams isomorphism in this context, consider a normal subgroup $N$ of a finite group $G$. As a consequence of equivariant presentability of $\Cc$, the category $\Cc(G)$ is canonically tensored over $G$-spaces, and in analogy with the usual definition for genuine $G$-spectra we say that an object $X \in \Cc(G)$ is \textit{$N$-free} if the canonical map $X\otimes E\mathcal F_N \to X\otimes1 = X$ is an equivalence. Here $\EF_N$ is the universal $G$-space for the family $\Ff_N$ of subgroups $H \leqslant G$ such that $H \cap N = e$ (see \Cref{def:NBorelObjects}). On the $N$-free $G$-objects, one can define an \textit{$N$-orbit functor}
	\[
	-/N\colon \Cc(G)_{\Nfree} \to \Cc(G/N),
	\]
	which is a partial left adjoint of the inflation functor $\Cc(G/N) \to \Cc(G)$ associated to the quotient morphism $G \to G/N$. The inflation functor moreover admits a right adjoint $(-)^N\colon \Cc(G) \to \Cc(G/N)$, called the \textit{$N$-fixed point functor}, and our main result is the following comparison between these adjoints:

    \begin{introthm}[Abstract Adams isomorphism for global categories, \Cref{thm:GlobalFormalAdamsIso}]
        \label{introthm:FormalAdamsIso}
    Let $\Cc$ be an equivariantly presentable and equivariantly semiadditive global category. Let $N \trianglelefteqslant G$ be a normal subgroup of a finite group $G$, and assume that the $N$-fixed point functor $(-)^N\colon \Cc(G) \to \Cc(G/N)$ preserves colimits. Then there is a natural equivalence
        \[
        \Nm \colon X/N \iso X^N
        \]
    	in $\Cc(G/N)$ for every $N$-free object $X \in \Cc(G)_{\Nfree}$.
    \end{introthm}

    Specializing the theorem, we obtain abstract Adams isomorphisms for various global categories of interest:

	\begin{enumerate}[(1)]
		\item For $\Cc(G) = \mySp_G$, the category of genuine $G$-spectra (\Cref{ex:EquivHtpyTheory}), this precisely recovers the classical Adams isomorphism \cite{LMS}*{Theorem~II.7.1}.
		\item For $\Cc(G) = \mySp_G^{\gl}$, the category of $G$-global spectra (\Cref{ex:GlobalEquivHtpyTheory}), this provides a \emph{global Adams isomorphism} refining the classical Adams isomorphism and generalizing the result for $G=N$ appearing in \cite{tigilauri}.
		\item There are also `non-group completed' versions of the previous two examples, yielding Adams isomorphism for special $G$-equivariant or $G$-global $\Gamma$-spaces, see Example~\ref{ex:gamma-Adams}.
		\item If $\Ee$ is any semiadditive category, the global category of \emph{Mackey functors} in $\Ee$, given by $\Cc(G) = \mathrm{Mack}_{\Ee}(G) \coloneqq \Fun^{\times}(\Span(\Fin_G),\Ee)$, satisfies the assumptions of the theorem, see Example~\ref{ex:MackeyFunctors}. In particular, taking $\Ee$ to be the $1$-category of abelian groups, this yields Adams isomorphisms for classical Mackey functors, while for $\Ee$ the derived category of abelian groups we obtain an Adams isomorphism for Kaledin's \emph{derived Mackey functors} \cite{DerMack11}.
		\item The theorem further applies to $\Cc(G) = \KK^G$, the $G$-Kasparov K-theory category defined by Bunke, Engel, and Land \cite{BEL2023Kasparov}, which we recall in \Cref{ex:Kasparov}.
	\end{enumerate}

	In fact, we prove a more general statement in which the category $\Glo$ is replaced by a small category $T$ and the role played by the wide subcategory $\Orb \subseteq \Glo$ of \textit{injective} homomorphisms in the definitions of equivariant presentability and semiadditivity is taken by a so-called \textit{atomic orbital} subcategory $P \subseteq T$, see \Cref{thm:adams-gen}.

    \subsection*{Related work} Given that the Adams isomorphism is both a fundamental and yet somewhat surprising feature of equivariant stable homotopy theory, it is natural to ask for the key properties necessary to establish Adams isomorphisms in other contexts. The quest for such \emph{formal Adams isomorphisms} goes back at least to \cite{wirthrev}, where May pointed out the formal similarity of the Adams isomorphism to the Wirthm\"uller isomorphism, but noted that unlike the latter it so far resisted an axiomatic treatment.

    A first step in this direction was taken by Hu \cite{Hu2003Duality}, who provided a very general duality result in parametrized homotopy theory encompassing both the Wirthmüller isomorphism as well as the Adams isomorphism in equivariant stable homotopy theory for compact Lie groups. While her treatment provided part of the original inspiration for our approach, the nature of her argument is ultimately geometric rather than categorical, employing a parametrized version of the Pontryagin-Thom construction. An analogue of her argument in the context of equivariant motivic homotopy theory was indicated by Hoyois \cite{Hoyois2017Motivic}*{Section~1.4.1}.

    An entirely different approach to formal Adams isomorphisms was given by Sanders \cite{Sanders2019Adams}, based on the notion of the \textit{compactness locus} of a geometric functor $f^*$ between rigidly compactly generated tensor triangulated categories. This setup is somewhat orthogonal to ours: while Sanders' approach works for a single functor $f^*$ without need for a parametrized category in the background, we on the other hand do not require stability or any sort of monoidal structure. In fact, several of our key examples do not fit into Sanders' framework despite having natural symmetric monoidal structures: the tt-category of global spectra is not rigid (Remark~\ref{rk:global-not-rigid}), while examples like $G$-equivariant $\Gamma$-spaces or $G$-Mackey functors are not even stable. Nevertheless, we will show in \Cref{sec:Comparison_Sanders} that our abstract Adams isomorphism agrees with that of Sanders whenever the two frameworks overlap.

	In addition to these general approaches, also the classical Adams isomorphism for genuine $G$-spectra has been revisited several times. In particular, Reich and Varisco \cite{ReichVarisco2016Adams} first lifted the construction of the classical Adams isomorphism from the homotopy category to the $\infty$-categorical level, by giving an explicit and natural model of the map in the $1$-category of orthogonal spectra.

	A treatment in the language of parametrized category theory was first given by Quigley and Shah \cite[Remark~4.29]{QuigleyShay2021Tate}, who introduced a \textit{parametrized Tate construction} for genuine equivariant spectra, with the classical Adams isomorphism as a special case. Similarly to our approach below, their construction is based on the norm maps of Hopkins and Lurie \cite{hopkins2013ambidexterity}.

	A treatment of the Adams isomorphism in equivariant motivic homotopy theory was given by Gepner and Heller \cite{Gepner_Heller}.

	\subsection*{Outline} In Section~\ref{sec:recollections} we recall relevant background material on parametrized higher category theory. Section~\ref{sec:families} introduces and studies \emph{torsion objects} in param\-etrized categories, generalizing the notion of $N$-free $G$-spectra. Using this language, we then state and prove our general abstract Adams isomorphism in Section~\ref{sec:adams-iso-gen}. In Section~\ref{sec:Adams_Iso_Equivariant} we specialize this result to the Adams isomorphism for global categories stated above, and we discuss various examples.

	The paper ends with three short appendices. In Appendix~\ref{sec:cocont} we prove a general statement about cocontinuity of the right adjoints to restrictions for several universal examples of parametrized higher categories, which is in particular used in establishing the Adams isomorphisms for global and equivariant $\Gamma$-spaces. In Appendix~\ref{sec:Comparison_Sanders}, we compare our abstract Adams isomorphism to that of Sanders \cite{Sanders2019Adams}. Finally, Appendix~\ref{sec:BC} collects some aspects of the calculus of mates used throughout the paper.

	\subsection*{Acknowledgements}
     B.C.\ thanks Ulrich Bunke and Benjamin Dünzinger for discussions on equivariant Kasparov categories. The authors would like to thank the referee for helpful comments, which led to the inclusion of Appendix~\ref{sec:BC}.

	During the preparation of this article, B.C.\ was supported by the Max Planck Institute for Mathematics in Bonn and the SFB 1085 `Higher Invariants' in Regensburg, funded by the DFG.

	S.L.\ is an associate member of the Hausdorff Center for Mathematics at the University of Bonn, and is supported by the DFG Schwerpunktprogramm 1786 `Homotopy Theory and Algebraic Geometry' (project ID SCHW 860/1-1).

	\section{Recollections on parametrized higher categories}\label{sec:recollections}
	In this article, we will freely use the language of parametrized higher category theory, as established by \cites{exposeI, shah2021parametrized, nardin2016exposeIV} and, from the perspective of categories internal to $\infty$-topoi, by \cites{martini2021yoneda, martiniwolf2021limits, martiniwolf2022presentable}. We will follow our conventions from \cite{CLL_Global}. Throughout this section, we fix a small category $T$.

	\begin{definition}
		A \textit{$T$-category} is a functor $\Cc\colon T\catop \to \Cat$. If $\Cc$ and $\Dd$ are $T$-categories, then a \textit{$T$-functor} $F\colon \Cc \to \Dd$ is a natural transformation from $\Cc$ to $\Dd$. The category $\Cat_T$ of $T$-categories is defined as the functor category $\Cat_T\coloneqq\Fun(T\catop, \Cat)$.
	\end{definition}

	\begin{remark}
		As a category of functors into a Cartesian closed category, $\Cat_T$ is again Cartesian closed. We write $\ul\Fun_T$ for the internal hom objects.
	\end{remark}

	\begin{example}
		We write $\Glo$ for the $(2,1)$-category of finite groups, homomorphisms, and with conjugations as $2$-cells; equivalently, this is the $(2,1)$-category of finite connected $1$-groupoids.

		We will refer to $\Glo$-categories as \emph{global categories} throughout.
	\end{example}

	\begin{example}\label{ex:Orb}
		We write $\Orb\subseteq\Glo$ for the wide subcategory spanned by the \emph{injective} group homomorphisms. Restricting along the inclusion, every global category has an underlying $\Orb$-category.
	\end{example}

	Any $T$-category can be equivalently viewed as a limit preserving functor $\PSh(T)^\op\to\Cat$; in particular, we will frequently evaluate $T$-categories at general presheaves over $T$.

	\begin{example}[cf.~\cite{CLL_Global}*{Example~2.1.11 and Remark~2.1.16}]
		We have a limit preserving functor $\ul\Spc_T=\PSh(T)_{/\bullet}\colon\PSh(T)^\op\to\Cat$ sending a presheaf $X$ to $\PSh(T)_{/X}$, with contravariant functoriality via pullback. As a functor $T^\op\to\Cat$ this can be equivalently described as $\ul\Spc_T(A)\simeq\PSh(T_{/A})$, with functoriality via restriction along the postcomposition maps $T_{/A} \to T_{/B}$ for $A \to B$.
	\end{example}

	\begin{example}\label{ex:Orb-spaces-G-SSet}
		For every finite group $G$, the slice category $\Orb_{/G}$ is equivalent to the $1$-category $\Orb_G$ of transitive $G$-sets (via sending an injective homomorphism $\phi\colon H\to G$ to $G/\text{im}(\phi)$). By Elmendorf's theorem \cite{elmendorf}, we can therefore identify each individual $\ul\Spc_{\Orb}(G)$ with the $\infty$-category of $G$-spaces. In fact, as explained in \cite{CLL_Clefts} we can bundle up these equivalences to give an explicit model $\ul\S$ of $\ul\Spc_{\Orb}$ as follows:

		We have a functor $\Fun(B(-),\text{SSet})\colon\Glo^\op\to\Cat$ sending a group $G$ to the category $\Fun(BG,\text{SSet})$ of $G$-objects in the $1$-category of simplicial sets, with the obvious functoriality. For every such $G$, we can equip $\Fun(BG,\text{SSet})$ with the \emph{$G$-equivariant weak equivalences}, i.e.~those maps whose geometric realization is a $G$-equivariant homotopy equivalence. The restrictions are then clearly homotopical, so this defines a functor from $\Glo^\op$ into the category of relative categories. Postcomposing with $\infty$-categorical localization, we therefore obtain a global category $\ul{\mathscr S}$, and \cite{CLL_Clefts}*{Theorem~5.5} provides the desired equivalence $\ul\S\simeq\ul\Spc_{\Orb}$.
	\end{example}

	\subsection{Orbitality and parametrized (co)products}

	\begin{definition}[{\cite{CLL_Global}*{Definition~4.2.2, Definition~4.2.11}}]
		Let $P \subseteq T$ be a wide subcategory. We say that $P$ is \textit{orbital} if for every pullback diagram
		\begin{equation}
			\label{eq:PullbackSquareOrbitality}
			\begin{tikzcd}
				A' \dar[swap]{p'} \rar{\alpha} \drar[pullback] & A \dar{p} \\
				B' \rar{\beta} & B
			\end{tikzcd}
		\end{equation}
		in $\PSh(T)$, with $A,B,B' \in T$ and $p\colon A \to B$ in $P$, the morphism $p'\colon A' \to B'$ can be decomposed as a disjoint union $(p_i)_{i=1}^n\colon \bigsqcup_{i=1}^n A_i \to B'$ for morphisms $p_i\colon A_i \to B'$ in $P$.

		If $P$ is orbital, we let $\ulfinPsets \subseteq \ul{\Spc}_T=\PSh(T)_{/\bullet}$ denote the full parametrized subcategory spanned at $B \in T$ by those morphisms $A \to B$ in $\PSh(T)$ which are equivalent to disjoint unions $(p_i)_{i=1}^n\colon \bigsqcup_{i=1}^n A_i \to B$ for morphisms $p_i\colon A_i \to B$ in $P$. Orbitality of $P$ guarantees that this is well-defined.

		We say a map $f\colon A\rightarrow B$ in $\PSh(T)$ is in $\ulfinPsets$ if it is an object of $\ulfinPsets(B)$.
	\end{definition}

	\begin{example}
		The $(2,1)$-category $\Glo$ is orbital (as a subcategory of itself): the finite coproduct completion $\mathbb F_{\Glo}\coloneqq\mathbb F_{\Glo}^{\Glo}\subseteq\PSh(\Glo)$ is simply the $(2,1)$-category of finite groupoids, so it has all pullbacks, and as it moreover contains all representables, limits in $\mathbb F_{\Glo}$ are already limits in $\PSh(\Glo)$.
	\end{example}

	\begin{example}
		The wide subcategory $\Orb\subseteq\Glo$ from Example~\ref{ex:Orb} is an orbital subcategory of $\Glo$, see \cite{CLL_Global}*{Example 4.2.5}.
	\end{example}

	\begin{example}
		A $T$-category $\Cc$ is said to \textit{admit finite $P$-coproducts} if it admits $\ulfinPsets$-indexed colimits in the sense of \cite{CLL_Global}*{Definition 2.3.8}. Explicitly, this means that the restriction functor $p^*\colon \Cc(B) \to \Cc(A)$ admits a left adjoint $p_!\colon \Cc(A) \to \Cc(B)$ for every morphism $p\colon A \to B$ in $\ulfinPsets$, and for every pullback square \eqref{eq:PullbackSquareOrbitality} the \emph{Beck-Chevalley transformation}\footnote{We refer the reader to Appendix~\ref{sec:BC} for background on Beck-Chevalley transformations and some aspects of their rich calculus.}
		\[
			p'_!\alpha^* \xrightarrow{\eta} p'_!\alpha^*p^*p_!\simeq p'_!p^{\prime*}\beta^*p_!\xrightarrow{\epsilon} \beta^*p_!
		\] is an equivalence of functors $\Cc(A) \to \Cc(B')$, see \cite[Lemma~2.3.13]{CLL_Global}. Dually, we get a notion of finite $P$-products.

		We say that a $T$-functor $F\colon\Cc\to\Dd$ \emph{preserves finite $P$-coproducts} if for every $p\colon A\to B$ in $\ulfinPsets$ the Beck-Chevalley map $p_!F_A\to F_Bp_!$ (the mate of the naturality constraint $F_Af^*\simeq f^*F_B$) is an equivalence.
	\end{example}

	By \cite{CLL_Global}*{Proposition~4.2.14}, if $\Cc$ has finite $P$-coproducts, then each $\Cc(B)$ for $B\in\PSh(T)$ in particular has finite coproducts in the usual sense and each $f^*\colon\Cc(B)\to\Cc(A)$ preserves them. Similarly, if $F\colon\Cc\to\Dd$ preserves finite $P$-coproducts, then each $\Cc(A)\to\Dd(A)$ preserves finite coproducts in the usual sense, see Proposition~4.2.15 of \emph{op.~cit.}

	\subsection{Semiadditivity and stability} In order to talk about parametrized notions of semiadditivity, we need an extra condition on our orbital subcategory $P$:

	\begin{definition}[{\cite[Definition~4.3.1]{CLL_Global}}]
		An orbital subcategory $P \subseteq T$ is said to be \textit{atomic orbital} if for every morphism $p\colon A \to B$ in $P$ the diagonal $\Delta: A \to A \times_B A$ in $\PSh(T)$ is a disjoint summand inclusion, i.e. equivalent to a map of the form $A \hookrightarrow A \sqcup C$ for some presheaf $C \in \PSh(T)$.
	\end{definition}

	\begin{example}
		The subcategory $\Orb\subseteq\Glo$ is in fact even atomic orbital, see \cite{CLL_Global}*{Example~4.3.3}.
	\end{example}

	For the rest of this section, let $P$ be an atomic orbital subcategory of $T$. Let $\Cc$ be a pointed $T$-category, and assume that $\Cc$ admits finite $P$-products and finite $P$-coproducts. Then one can define for every morphism $p\colon A\to B$ in $\ul{\mathbb F}^P_T$ a \textit{norm map}
	\[
	\Nm_p\colon p_! \to p_*
	\]
	of functors $\Cc(A) \to \Cc(B)$, see \cite[Construction~4.3.6]{CLL_Global}.

	\begin{definition}
		A pointed $T$-category $\Cc$ admitting both finite $P$-coproducts and finite $P$-products is said to be \textit{$P$-semiadditive} if the norm map $\Nm_p$ is an equivalence for every $p\colon A\rightarrow B$ in $\ul{\mathbb F}^P_T$ or equivalently \cite{CLL_Global}*{Corollary~4.5.3} for those $p$ where in addition $B$ is a finite disjoint union of representables.

		If $\Cc$ is in addition \emph{fiberwise stable}, i.e.~it factors through the category of stable categories and exact functors, then we will call it \emph{$P$-stable}.
	\end{definition}

	A functor of $P$-semiadditive categories preserves finite $P$-coproducts if and only if it preserves finite $P$-products, see  \cite{CLL_Global}*{Proposition~4.6.14}; in this case, we call it \emph{$P$-semiadditive}.

	\begin{example}
		Specializing this to $\Orb\subseteq\Glo$ yields notions of {$\Orb$-semiadditivity} and $\Orb$-stability for global categories. As in \cite{CLL_Global,CLL_Clefts}, we will refer to these concepts as \emph{equivariant semiadditivity} and \emph{equivariant stability}.
	\end{example}

	\subsection{Presentability} In \cite{CLL_Clefts}*{Definition~4.3}, we introduced various degrees of parametrized presentability for $T$-categories, encoded in the choice of a so-called \emph{cleft} $S\subseteq T$. For the present article we will only be interested in the special cases where either $S=P\subseteq T$ is an atomic orbital subcategory \cite{CLL_Clefts}*{Example~3.7} or $S=T$ \cite{CLL_Clefts}*{Example~3.5}:

	\begin{definition}
		We say that a $T$-category $\Cc$ is \textit{$P$-presentable} if it admits finite $P$-coproducts and is \emph{fiberwise presentable} in the sense that the functor $\Cc\colon T^{\op}\rightarrow \Cat$ factors through the subcategory $\PrL\subseteq \Cat$.
	\end{definition}

	\begin{definition}
		We say that a $T$-category $\Cc$ is \textit{$T$-presentable} if it is fiberwise presentable and \emph{$T$-cocomplete}, i.e.~for every $f\colon A\to B$ in $T$ (or equivalently in $\PSh(T)$) the restriction $f^*$ admits a left adjoint $f_!$ satisfying base change.
	\end{definition}

	Note that if $P=T$ itself is atomic orbital, this indeed agrees with the previous definition by \cite{CLL_Global}*{Proposition 4.2.14}.

	\begin{example}
		The $T$-category $\ul\Spc_T$ is $T$-presentable. More generally, if $I$ is any small $T$-category, then the internal hom $\ul\Fun_T(I,\ul\Spc_T)$ is $T$-presentable. (In fact, \cite{martiniwolf2022presentable}*{Definition~6.2.1} \emph{defined} $T$-presentable categories as suitable localizations of such functor categories, with the equivalence to the previous definition being established as Theorem~6.2.4 of \emph{op.\ cit.})
	\end{example}

	\begin{remark}\label{rk:basechange}
		Already fiberwise presentability ensures that the restriction functors $f^*$ admit right adjoints $f_*$. If $\Cc$ is $T$-presentable, then these right adjoints again satisfy a Beck-Chevalley condition, i.e.~$\Cc$ is also \emph{$T$-complete}. We caution the reader, however, that the Beck-Chevalley condition for the right adjoints does \emph{not} hold for general $P$-presentable $T$-categories.
	\end{remark}

	Specializing the above, we get notions of \emph{$\Orb$-presentable} and \emph{$\Glo$-presentable} global categories; below, we will refer to these as \emph{equivariantly presentable} and \emph{globally presentable}, respectively.

	\begin{example}\label{ex:equiv-spectra}
		For any finite group $G$, we define $\mySp_G$, the ($\infty$-)category of \emph{genuine $G$-spectra}, as the localization of the $1$-category $\Fun(BG,\text{Sp}^\Sigma)$ of symmetric spectra with $G$-action at the \emph{$G$-stable weak equivalences} of \cite{hausmann-equivariant}*{Definition~3.25}. These fit together into a global category $\ul\mySp$, with structure maps given as left derived functors of the evident restriction functors between the $1$-categorical models.

		In more detail, $G\mapsto\Fun(BG,\text{Sp}^\Sigma)$ becomes a global $1$-category via precomposition. For every $G$, we can consider the full subcategory $\Fun(BG,\text{Sp}^\Sigma)^\text{cof}$ of those $G$-symmetric spectra that are cofibrant in the equivariant projective model structure of \cite{hausmann-equivariant}*{Theorem~4.8}; the restriction maps then preserve these subcategories and are homotopical in the above weak equivalences when restricted to them by \cite{CLL_Clefts}*{Lemma~9.3}, so that we obtain a functor $\Fun(-,\text{Sp}^\Sigma)^\text{cof}$ from $\Glo$ into the $2$-category of relative $1$-categories. Postcomposition with the localization functor then yields the global category $\ul{\mySp}$.

		We call this the \emph{global category of equivariant spectra}; it is equivariantly presentable and equivariantly stable: in fact, \cite{CLL_Clefts}*{Theorem~9.4} makes precise that it is the \emph{free} equivariantly presentable and stable global category on one generator.
	\end{example}

	\begin{example}\label{ex:global-spectra}
		We can also localize $\Fun(BG,\text{Sp}^\Sigma)$ at the \emph{$G$-global stable weak equivalences} of \cite{g-global}*{Definition~3.1.28}, i.e.~those maps $f$ such that the \emph{underived} restriction $\phi^*f$ is an $H$-stable weak equvialence for every $\phi\colon H\to G$, yielding the category $\mySp^\text{gl}_G$ of \emph{$G$-global spectra}. These again fit together into a global category $\ul\mySp^\text{gl}$, which we call the \emph{global category of global spectra}. It is the free \emph{globally} presentable equivariantly stable global category on one generator, see~\cite{CLL_Global}*{Theorem~7.3.2}. Its underlying category $\mySp^\text{gl}_e$ is the usual category of global spectra (with respect to finite groups) in the sense of \cite{schwede2018global}, see~\cite{hausmann-global}*{Theorem~5.3}.
	\end{example}

	\begin{example}\label{ex:gamma}
		For any finite group $G$, we can consider the ($\infty$-)category $\GammaS_*^\text{spc}(G)$ of \emph{special $\Gamma$-$G$-spaces} in the sense of Shimakawa \cite{shimakawa,shimakawa-simplify}, defined as a certain localization of the category of reduced functors from the category of finite pointed sets into the $1$-category of $G$-simplicial sets. For varying $G$, these again fit together into a global category $\ul\GammaS_*^\text{spc}$, see \cite{CLL_Clefts}*{paragraph after Lemma~7.14} for details. By Theorem~7.17 of \emph{op.~cit.}, this is the free equivariantly presentable and equivariantly semiadditive global category.

		Similarly, \cite{CLL_Global}*{Theorem~5.3.5} describes the free globally presentable equivariantly semiadditive global category $\ul\GammaS_*^\text{gl, spc}$ in terms of the \emph{$G$-global special $\Gamma$-spaces} of \cite{g-global}*{Section~2.2}.
	\end{example}

    \section{Torsion and complete objects}\label{sec:families}

    In the equivariant stable homotopy theory of a finite group $G$, there are two frequently used categories of equivariant spectra: the category $\mySp^{BG}$ of Borel $G$-spectra and the category $\mySp_G$ of genuine $G$-spectra. The forgetful functor $\mySp_G \to \mySp^{BG}$ admits fully faithful left and right adjoints $\mySp^{BG} \hookrightarrow \mySp_G$. The essential image of the left adjoint consists of the \textit{Borel-free $G$-spectra}: those $X$ such that the canonical map
    \[
    X \otimes EG \to X
    \]
    is an equivalence; here $EG$ is a free $G$-space whose underlying space is contractible and $- \otimes EG$ denotes the tensoring of a $G$-spectrum with the $G$-space $EG$. Equivalently, a $G$-spectrum $X$ is Borel-free if and only the $H$-geometric fixed points $\Phi^H(X)$ are trivial for every nontrivial subgroup $H \leqslant G$, which explains the use of the word `free'. Dually, the essential image of the right adjoint consists of the \textit{Borel-complete $G$-spectra}: those $X$ such that the canonical map $X\rightarrow X^{EG}$ is an equivalence, or equivalently those $X$ such that genuine and homotopy $H$-fixed points agree for every subgroup $H \leqslant G$.

    These two notions of Borel $G$-spectra have generalizations to an arbitrary family $\Ff$ of subgroups of $G$, that is, a collection of subgroups closed under conjugacy and passing to smaller subgroups. Let $\EF$ denote the \emph{universal $G$-space} for $\Ff$,\footnote{This is sometimes also referred to as the \emph{classifying $G$-space} for $\Ff$, see e.g.~\cite{lueck}; our terminology here follows \cite{LMS}*{Definition II.2.10}.} characterized by
    \[
        (\EF)^H = \begin{cases} 1 & H \in \Ff \\ \emptyset & H \notin \Ff.
        \end{cases}
    \]
    Following \cite{MNN2017Nilpotence}, we say that a $G$-spectrum $X$ is \textit{$\Ff$-torsion} if the map $X \otimes \EF \to X$ is an equivalence; other names used in the literature are \emph{$\Ff$-spectra} \cite{LMS}*{Definition~II.2.1}, \emph{$\mathcal F$-objects} \cite{mandell-may}*{II.6.1}, and \emph{$\Ff$-free spectra} \cite{greenlees-may}*{IV.17}. Dually, $X$ is called \textit{$\Ff$-complete} if $X \iso X^{\EF}$. The Borel-free and Borel-complete $G$-spectra are recovered by letting $\Ff$ consist of only the trivial subgroup. More generally, if $N\trianglelefteqslant G$ is normal, taking $\Ff$ to be those subgroups $H \leqslant G$ such that $H \cap N = e$ gives the notion of \textit{$N$-free $G$-spectra} discussed in the introduction.

    The goal of this section is to provide a general definition of torsion and complete objects in a parametrized category. Throughout this section, we let $P$ be an arbitrary small ($\infty$-)category.

	\begin{definition}
		For an object $A \in P$, a \textit{$P$-family over $A$} is a (non-empty) sieve of $P_{/A}$, i.e.\ a (non-empty) full subcategory $\Ff \subseteq P_{/A}$ satisfying the property that for every morphism in $P_{/A}$
		\[
		\begin{tikzcd}
			B \ar{rr} \drar[swap]{p} && C \dlar{q} \\
			& A,
		\end{tikzcd}
		\]
		if $q$ is in $\Ff$ then so is $p$.
	\end{definition}

	\begin{example}
		\label{ex:Trivial}
		The whole subcategory $\mathcal A\ell\ell_A \coloneqq P_{/A}$ is always a $P$-family over $A$.
	\end{example}

	\begin{example}
		\label{ex:FamilyOfSubgroups}
		Let $P = \Orb$ be the global orbit category, see \Cref{ex:Orb}. Then for a finite group $G$, $\Orb$-families over $G$ correspond under the equivalence $\Orb_{/G}\simeq\Orb_G$ (Example~\ref{ex:Orb-spaces-G-SSet}) to \emph{families of subgroups} of $G$, i.e.~non-empty collections of subgroups of $G$ closed under taking subconjugates.
	\end{example}

	\begin{example}
		\label{ex:ProperSubobjects}
		Assume that $P$ admits no non-trivial retracts; this is for example satisfied when $P$ is atomic orbital, see \cite{CLL_Global}*{Lemma~4.3.2}. Then we obtain for every object $A \in P$ a family ${\mathcal P}_A \subseteq P_{/A}$ consisting of the non-terminal objects, that is, those morphisms $p\colon B \to A$ in $P$ which are not equivalences. Specializing to the $P=\Orb$ as in \Cref{ex:FamilyOfSubgroups}, this corresponds to the family of proper subgroups of a finite group $G$.
	\end{example}

	\begin{example}
		\label{ex:PFamilyFf}
		Assume that $P$ is a wide subcategory of some other small $\infty$-category $T$, and let $f\colon A \to B$ be a morphism in $T$. Then there is a $P$-family $\Ff_f$ over $A$ containing those morphisms $p\colon C \to A$ in $P$ for which the composite $fp\colon C \to B$ is also in $P$.
	\end{example}

	\begin{definition}
		Let $\Ff \subseteq P_{/A}$ be a $P$-family over $A$. We define $\EF \in \PSh(P_{/A})$ as the unique presheaf with
		\[
		\EF(B) = \begin{cases} 1 & B \in \Ff \\ \emptyset & B \notin \Ff,  \end{cases}
		\]
		i.e.~$E\mathcal F$ is the composite $P_{/A}\catop \to [1] \hookrightarrow \Spc$,
      where the first functor is the map classifying the sieve $\Ff$ and the second one picks out the morphism of spaces $\emptyset\to1$.

      Under the canonical identification $\PSh(P_{/A}) \xrightarrow{\simeq} \PSh(P)_{/A}$, we will generally think of $\EF$ as a presheaf on $P$ equipped with a morphism $i\colon \EF \hookrightarrow A$. Note that $i$ is a monomorphism, as $\EF \in \PSh(P_{/A})$ is $(-1)$-truncated.
	\end{definition}

	\begin{remark}\label{rk:EF-colim}
		By Kan's pointwise formula \cite{cisinski-book}*{Proposition~6.4.9${}^\op$} and the sieve property, $E\mathcal F$ is the image of the terminal object under the left Kan extension $\PSh(\mathcal F)\to\PSh(P_{/A})$. Writing the terminal object in $\PSh(\mathcal F)$ as a colimit of representables as usual, we therefore get an equivalence
		\[
			E\mathcal F\simeq\mathop{\text{colim}}\limits_{p\in\Ff}\Hom_{P_{/A}}(-,p)=\mathop{\text{colim}}\limits_{p\in\Ff} p_!(1)
		\]
		in $\PSh(P_{/A})$. Applying the equivalence $\PSh(P_{/A})\simeq\PSh(P)_{/A}$, we see that the map $i\colon E\Ff\to A$ of $P$-presheaves is just the tautological map
		\[
			\mathop{\text{colim}}\limits_{(p\colon B\to A)\in\Ff} B\to A.
		\]

	\end{remark}

	\begin{example}
		If $\Ff = P_{/A}$ is the maximal family, we get $\EF = A$.
	\end{example}

    \begin{example}\label{ex:classifying-subgroup}
        Let $\Ff \subseteq \Orb_{G} = \Orb_{/G}$ be a family of subgroups of a finite group $G$. Then $\EF$ agrees (up to the equivalence between $G$-spaces and $\Orb_G$-presheaves provided by Elmendorf's theorem) with the usual universal $G$-space for the family $\Ff$ as recalled in the introduction of this section.
    \end{example}

	Given a presentable $P$-category $\Cc$ and a family $\Ff$ over $A \in P$, we may now define the notion of $\Ff$-torsion and $\Ff$-complete objects.

    \begin{construction}
		Recall from \cite{martiniwolf2022presentable}*{Proposition~8.2.9} that $\Cc$ is canonically tensored over the $P$-category $\ul{\Spc}_P$ of $P$-spaces. More precisely, there is a unique $P$-functor
		\begin{equation}\label{eq:tensoring}
			- \otimes -\colon \Cc \times \;\ul{\Spc}_P \to \Cc
		\end{equation}
		such that $- \otimes 1\colon \Cc \to \Cc$ is the identity and which preserves $P$-colimits in each variable separately in the following sense: for each $A\in P$ the functor
		\[
			- \otimes_A -\colon \Cc(A) \times \PSh(P)_{/A} \to \Cc(A)
		\]
		obtained by evaluating $(\ref{eq:tensoring})$ in degree $A$ preserves (non-parametrized) colimits in both variables, and $\otimes$ satisfies the \emph{projection formula}, i.e.~for every $f\colon A\to B$ in $P$ the canonical maps
		\begin{equation*}
			f_!(-\otimes_A f^*(-))\Rightarrow f_!(-)\otimes_B-
			\qquad\text{and}\qquad
			f_!(f^*(-)\otimes_A -)\Rightarrow -\otimes_B f_!(-)
		\end{equation*}
		obtained as mates of the structure equivalence $f^*(-\otimes_B-)\simeq f^*(-)\otimes_A f^*(-)$ are equivalences.

		In a similar way, we obtain a cotensoring
		\[
			(-)^{(-)} \colon \Cc(A) \times \PSh(P)_{/A}\catop \to \Cc(A)
		\]
    	which preserves limits in both variables.
	\end{construction}

	\begin{example}\label{ex:tensoring-g-spectra}
		Recall the $\Orb$-category $\ul\S$ of equivariant spaces from Example~\ref{ex:Orb-spaces-G-SSet}, giving an explicit model of $\ul\Spc_{\Orb}$.

		We will now make the tensoring of the underlying $\Orb$-category of $\ul\mySp$ explicit in this model: by the uniqueness statement recalled in the previous construction, it will suffice to construct some $\Orb$-functor $\ul{\mySp}\times\ul\S\to\ul\mySp$ that preserves colimits in each variable separately and that restricts to the identity when we plug in the terminal object in the second variable. We claim that
		\begin{equation}\label{eq:tensoring-model}
		\begin{aligned}
		\Fun(-,\text{Sp}^\Sigma)^\text{cof}\times\Fun(-,\text{SSet})&\to \Fun(-,\text{Sp}^\Sigma)^\text{cof}\\
		X,K&\mapsto X\smashp K_+
		\end{aligned}
		\end{equation}
 		descends to the required functor. In particular for each fixed $G$, the parametrized tensoring agrees with the usual tensoring of $G$-spectra by $G$-spaces.

		To verify the claim, we first observe that \cite{hausmann-equivariant}*{Proposition~2.29 and Lemma~4.5} show that for every fixed $G$ the smash product is a left Quillen bifunctor with respect to the usual equivariant model structure on $\Fun(BG,\text{SSet})$ from \cite{cellular}*{Proposition~2.16} (whose weak equivalences are the $G$-equivariant weak equivalences and whose cofibrations are the levelwise injections). This shows that $(\ref{eq:tensoring-model})$ is well-defined and homotopical for every fixed $G$ (as every $G$-simplicial set is cofibrant), and that the induced functor $\mySp_G\times\S_G\to\mySp_G$ preserves colimits in each variable.

		For every \emph{injective} homomorphism $f\colon H\to G$ the left Kan extension functors $f_!\colon\Fun(BH,\text{SSet})\to \Fun(BG,\text{SSet})$ and $f_!\colon\Fun(BH,\text{Sp}^\Sigma)\to \Fun(BG,\text{Sp}^\Sigma)$ are left Quillen (see \cite{g-global}*{Proposition~1.1.18} and \cite{hausmann-equivariant}*{Section~5.2}, respectively), and so the projection formula can be checked on the level of models again, where this is a trivial computation.

		Altogether, we see that the induced functor $\ul\mySp\times\ul\S\to\ul\mySp$ preserves colimits in each variable. As $(\ref{eq:tensoring-model})$ restricts to the identity for the terminal $G$-simplicial sets, this then completes the proof of the claim.

		As an upshot we can in particular simply compute the above abstract tensoring of $\mySp_G$ in terms of the levelwise smash product---in fact, for fixed $G$ there isn't even any need to derive as smashing with a fixed $G$-simplicial set also clearly preserves \emph{level} weak equivalences.
	\end{example}

	\begin{example}\label{ex:global-otimes}
		In the same way, one can identify the above tensoring for global spectra as well as equivariant or global $\Gamma$-spaces as the functor induced by the usual smash product.
	\end{example}

	\begin{definition}[$\Ff$-torsion and $\Ff$-complete objects]
		\label{def:FBorelObjects}
		Let $\Cc$ be a $P$-presentable $P$-category, let $A \in P$, and let $\Ff \subseteq P_{/A}$ be a $P$-family over $A$.
        \begin{itemize}
            \item An object $X \in \Cc(A)$ is called \textit{$\Ff$-torsion} if the canonical map $X \otimes_A \EF \to X \otimes_A A = X$ induced by the map $i\colon \EF \hookrightarrow A$ is an equivalence.
            \item Dually we say that $X$ is \textit{$\Ff$-complete} if the map $X \to X^{\EF}$ is an equivalence.
        \end{itemize}
        We denote by
		\[
		\Cc(A)_{\tors{\Ff}} \subseteq \Cc(A) \quad \text{ and } \quad \Cc(A)_{\comp{\Ff}} \subseteq \Cc(A)
		\]
        the full subcategories of $\Ff$-torsion and $\Ff$-complete objects, respectively.
	\end{definition}

	\begin{example}\label{ex:torsion-equiv}
		Let $P = \Orb$ and let $\Cc = \ul{\mySp}\vert_{\Orb}$ be the $\Orb$-category of genuine equivariant spectra. Combining Examples~\ref{ex:classifying-subgroup} and~\ref{ex:tensoring-g-spectra}, we see that for a family $\Ff$ of subgroups of a finite group $G$ and a genuine $G$-spectrum $X \in \mySp_G$, the map $X \otimes \EF \to X$ from the previous definition agrees with the familiar one, showing that it recovers the usual notion of $\Ff$-torsion recalled in the beginning of this section. Passing to adjoints, we then deduce the same statement for the $\mathcal F$-complete objects.

        The $\Ff$-torsion $G$-spectra admit a well-known interpretation in terms of \emph{geometric fixed points}: $X$ is $\Ff$-torsion if and only $\Phi^H(X)=0$ for all $H\notin\Ff$. Indeed, as the geometric fixed point functors $\Phi^H\colon \mySp_G \to \mySp$ are symmetric monoidal and detect equivalences, this is immediate from the definition of $\EF$.
	\end{example}

	\begin{example}\label{ex:torsion-global}
		Let us describe torsion objects in the underlying $\Orb$-category of the global category $\ul\mySp^\text{gl}$ from Example~\ref{ex:global-spectra}: if $\Ff$ is any family of subgroups of some finite group $G$, then Example~\ref{ex:global-otimes} shows that a $G$-global spectrum $X$ (represented as a symmetric spectrum with $G$-action) is $\Ff$-torsion if and only if the collapse map $X\smashp E\Ff_+\to X$ is a $G$-global stable weak equivalence, i.e.~if and only if $\phi^*(X)\smashp\phi^*(E\Ff)_+\to\phi^*(X)$ is an $H$-stable weak equivalence for every $\phi\colon H\to G$. As $\phi^*(E\Ff)$ is a universal space for the family $\phi^*\Ff\coloneqq\{K\leqslant H\mid\phi(K)\in\Ff\}$, we conclude that a $G$-global spectrum $X$ is $\Ff$-torsion if and only if the underlying $H$-equivariant spectrum of $\phi^*X$ is $\phi^*\Ff$-torsion for every $\phi\colon H\to G$.

		Let us now define $\Phi^\phi X\coloneqq\Phi^H(\phi^*X)$ as the $H$-equivariant geometric fixed points of the $H$-spectrum underlying $\phi^*X$. As usual geometric fixed points are compatible with restriction to subgroups, we then immediately conclude from the above characterization of torsion $G$-spectra that the $G$-global spectrum $X$ is $\Ff$-torsion if and only if $\Phi^{\phi} X = 0$ for all $\phi\colon H\rightarrow G$ with $\textup{im}(\phi)\notin \Ff$.
	\end{example}

        We have the following useful description of the $\Ff$-torsion and $\Ff$-complete objects:

	\begin{lemma}
		\label{lem:BorelObjectsOverBF}
		Let $\Cc$, $A$, and $\Ff$ be as in \Cref{def:FBorelObjects}. Then the restriction functor $i^*\colon \Cc(A) \to \Cc(\EF)$ along the map $i\colon \EF \hookrightarrow A$ in $\PSh(P)$ admits fully faithful left and right adjoints $i_!,i_*\colon \Cc(\EF) \hookrightarrow \Cc(A)$ whose essential images are $\Cc(A)_{\tors{\Ff}}$ and $\Cc(A)_{\comp{\Ff}}$, respectively.
	\end{lemma}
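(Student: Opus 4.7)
My plan is to first construct the two adjoints to $i^*$, verify fully faithfulness of $i_!$ by a single Beck--Chevalley computation, deduce fully faithfulness of $i_*$ from an adjoint triple argument, and finally identify the essential images via the tensor and cotensor with $E\Ff$.

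For existence of adjoints, $P$-cocompleteness (in the case $T=P$) directly supplies $i_!\colon\Cc(E\Ff)\to\Cc(A)$ together with base change against all pullback squares in $\PSh(P)$. For the right adjoint, fiberwise presentability of $\Cc$ together with the fact that $\PrL\hookrightarrow\Cat$ preserves limits lets us extend $\Cc$ to a limit-preserving functor $\PSh(P)\catop\to\PrL$; in particular $i^*$ becomes a cocontinuous accessible functor between presentable categories and hence admits a right adjoint $i_*$.

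Since $E\Ff$ is $(-1)$-truncated over $A$, the map $i$ is a monomorphism in $\PSh(P)$, so the square with all four corners equal to $E\Ff$ and the two non-identity edges equal to $i$ is a pullback. Base change for this square specializes to the assertion that the Beck--Chevalley map $\mathrm{id}_!\mathrm{id}^*\Rightarrow i^*i_!$, i.e.\ the unit of $i_!\dashv i^*$, is an equivalence; hence $i_!$ is fully faithful. Fully faithfulness of $i_*$ then follows formally: composing the two adjunctions yields $i^*i_!\dashv i^*i_*$ on $\Cc(E\Ff)$, and since $i^*i_!\simeq\mathrm{id}$, uniqueness of adjoints forces $i^*i_*\simeq\mathrm{id}$ as well.

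To identify the essential images, I would apply the projection formula for the tensoring to $i$: combined with the tautological identification $i_!(1_{E\Ff})\simeq E\Ff$ in $\PSh(P)_{/A}$, this gives $i_!i^*X\simeq X\otimes_A E\Ff$ for every $X\in\Cc(A)$, so the essential image of $i_!$ is precisely the $\Ff$-torsion objects. Dually, uniqueness of right adjoints identifies $i_*i^*$ with $(-)^{E\Ff}$, so the essential image of $i_*$ consists of the $\Ff$-complete objects. The main subtlety I anticipate is justifying the projection formula for the non-representable morphism $i$; I would reduce it to the representable case by writing $i$ as the colimit $\colim_{p\in\Ff}p_!(1)$ of Remark~\ref{rk:EF-colim} and invoking cocontinuity of the tensoring in both variables.
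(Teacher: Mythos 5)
Your proposal is correct and follows essentially the same strategy as the paper's proof: use $P$-presentability to get the adjoint triple, check the unit of $i_!\dashv i^*$ is an equivalence via base change against the degenerate pullback square built from the monomorphism $i$, and identify $i_!i^*$ with $-\otimes_A \EF$ via the parametrized tensoring. Two small differences are worth noting. First, for full faithfulness of $i_*$ you invoke the standard adjoint-triple fact ($L\dashv F\dashv R$ with $L$ fully faithful forces $R$ fully faithful), whereas the paper simply runs the dual Beck--Chevalley argument; both work, and yours avoids having to comment on base change for right adjoints. Second, for the essential images you make the projection-formula computation $i_!i^*X\simeq i_!(i^*X\otimes_{\EF}1_{\EF})\simeq X\otimes_A i_!(1_{\EF})= X\otimes_A\EF$ fully explicit, and you correctly flag the subtlety that $i$ is not representable; the paper sidesteps this by appealing directly to the characterization of the tensoring as the unique $P$-colimit-preserving $P$-functor with $-\otimes 1=\id$, which builds the relevant compatibility in from the start. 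Your suggested reduction via $\EF\simeq\colim_{p\in\Ff}p_!(1)$ and cocontinuity of $\otimes$ in each variable is a sound way to justify that step. One point you leave implicit (as does the paper) is that the identification $i_!i^*\simeq -\otimes_A\EF$ should be compatible with the respective augmentations to the identity, so that being in the essential image of $i_!$ (counit an equivalence) really matches the $\Ff$-torsion condition; this follows from the same naturality but deserves a word.
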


	In particular, while $\Cc(A)_{\tors{\Ff}}$ and $\Cc(A)_{\comp{\Ff}}$ are typically distinct subcategories of $\Cc(A)$, they are abstractly equivalent, both being equivalent to the category $\Cc(E\Ff)$.

	\begin{proof}
        By $P$-presentability of $\Cc$, $i^*$ has a left adjoint $i_!$, and the Beck-Chevalley condition for the pullback diagram
		\[
		\begin{tikzcd}
			\EF \dar[equal] \rar[equal] \drar[pullback] & \EF \dar[hookrightarrow]{i} \\
			\EF \rar[hookrightarrow]{i} & A
		\end{tikzcd}
		\]
        in $\PSh(P)$ shows that the unit $\id\to i^*i_!$ is an equivalence. Thus, \cite{kerodon}*{Tag \texttt{02EX}} implies that $i_!$ is fully faithful with essential image those $X \in \Cc(A)$ for which the \emph{counit} map $i_!i^*X \to X$ is an equivalence. But since the tensoring by $P$-spaces on $\Cc$ preserves $P$-colimits in both variables, the counit is conjugate to the map $X \otimes_A i_!i^*A \to X \otimes_A A = X$ induced by the counit $\EF = i_!i^*A \to A$ in $\PSh(P)_{/A}$, showing that the essential image of $i_!$ is $\Cc(A)_{\tors{\Ff}}$.

		The argument for $i_*$ is dual.
	\end{proof}

	\begin{corollary}\label{cor:torsion-limit}
		In the above situation, the map $\Cc(A)\to\lim_{(p\colon B\to A)\in\Ff^\op}\Cc(B)$
		induced by the restrictions $p^*\colon\Cc(B)\to\Cc(A)$ restricts to equivalences
		\begin{equation*}
			\Cc(A)_{\tors{\Ff}}\simeq\lim_{B\in\Ff^\op} \Cc(B)
			\qquad\text{and}\qquad
			\Cc(A)_{\comp{\Ff}}\simeq\lim_{B\in\Ff^\op} \Cc(B).
		\end{equation*}
	\end{corollary}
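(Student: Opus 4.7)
The plan is to reduce both equivalences to a single statement about $\Cc(E\Ff)$, and then rewrite $E\Ff$ as a colimit so that $\Cc$ (viewed as a limit-preserving functor on $\PSh(P)^\op$) converts it into the desired limit.

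\textbf{Step 1.} By Lemma~\ref{lem:BorelObjectsOverBF}, the restriction $i^*\colon\Cc(A)\to\Cc(E\Ff)$ restricts to equivalences $\Cc(A)_{\tors{\Ff}}\simeq\Cc(E\Ff)\simeq\Cc(A)_{\comp{\Ff}}$ (the first via the left adjoint $i_!$, the second via the right adjoint $i_*$). So it suffices to produce a natural equivalence $\Cc(E\Ff)\simeq\lim_{B\in\Ff^\op}\Cc(B)$ compatible with the restriction maps $p^*$ from $\Cc(A)$.

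\textbf{Step 2.} By Remark~\ref{rk:EF-colim}, viewed as a presheaf on $P$ we have
\[
    E\Ff\simeq\mathop{\colim}_{(p\colon B\to A)\in\Ff} B
\]
in $\PSh(P)$, and the structure map $i\colon E\Ff\to A$ is the tautological one. Since $\Cc$ extends uniquely to a limit-preserving functor $\PSh(P)^\op\to\Cat$, applying it to this colimit gives
\[
    \Cc(E\Ff)\simeq\mathop{\lim}_{(p\colon B\to A)\in\Ff^\op}\Cc(B),
\]
where the structure maps in the limit diagram are induced by the functoriality of $\Cc$ applied to the morphisms of $\Ff\subseteq P_{/A}$, i.e.\ by restriction.

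\textbf{Step 3.} It remains to check that the composite equivalence $\Cc(A)_{\tors{\Ff}}\hookrightarrow\Cc(A)\xrightarrow{i^*}\Cc(E\Ff)\simeq\lim_{B\in\Ff^\op}\Cc(B)$ is indeed the map induced by the family of restrictions $p^*\colon\Cc(A)\to\Cc(B)$ for $(p\colon B\to A)\in\Ff$, and similarly for the complete objects. This is immediate: for each such $p$, the colimit cocone map $B\to E\Ff$ followed by $i\colon E\Ff\to A$ is precisely $p$, so the corresponding projection out of the limit is $p^*$. The same diagram chase works verbatim for $\Cc(A)_{\comp{\Ff}}$, since that identification also goes through the restriction $i^*$.

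The only potential subtlety is the identification in Step~2, which relies on interpreting $\Cc$ on non-representable presheaves via the limit-preserving extension $\PSh(P)^\op\to\Cat$; this is a standard part of the framework recalled at the beginning of Section~\ref{sec:recollections}, so I do not expect any real obstacle.
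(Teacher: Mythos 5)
Your argument coincides with the paper's own proof: both deduce the claim from \Cref{lem:BorelObjectsOverBF} together with the descent equivalence $\Cc(E\Ff)\simeq\lim_{B\in\Ff^\op}\Cc(B)$ obtained by applying the limit-preserving extension of $\Cc$ to the colimit description of $E\Ff$ in \Cref{rk:EF-colim}. Your Step~3 spells out the compatibility with the restriction maps, which the paper leaves implicit; otherwise the two arguments are identical.
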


	For the underlying $\Orb$-category of $\ul\mySp$, this result appears as \cite{MNN2017Nilpotence}*{Theorem~6.27}, expressing the category of $\Ff$-complete $G$-spectra as a limit of $H$-spectra. On the other hand, specializing it to the underlying $\Orb$-category of $\ul\mySp^\text{gl}$ we get a $G$-global refinement of this result in the form of equivalences
	\begin{equation*}
		(\mySp^\text{gl}_G)_{\tors\Ff}\simeq\lim_{H\in\Ff^\op}\mySp^\text{gl}_H
		\qquad\text{and}\qquad
		(\mySp^\text{gl}_G)_{\comp\Ff}\simeq\lim_{H\in\Ff^\op}\mySp^\text{gl}_H.
	\end{equation*}

	\begin{proof}[Proof of Corollary~\ref{cor:torsion-limit}]
		This follows immediately by combining the lemma with the descent equivalence $\Cc(E\Ff)\simeq\lim_{B\in\Ff^\op}\Cc(B)$ induced by the equivalence $E\Ff\simeq\colim_{B\in\Ff} B$ from Remark~\ref{rk:EF-colim}.
	\end{proof}

	\begin{remark}\label{rk:F-we}
		The lemma in particular tells us that $i^*$ is a localization in the above situation. More generally, for any $P$-category $\Cc$, we will refer to maps $f$ in $\Cc(A)$ for which $i^*f$ is an equivalence as \emph{$\Ff$-weak equivalences}. Using descent once more, we see this is equivalent to $p^*f$ being an equivalence for every $p\in\Ff$.
	\end{remark}

	As another application of the lemma we obtain the following decomposition result:

	\begin{corollary}
		\label{cor:BorelObjectsGenerated}
		The subcategory $\Cc(A)_{\tors{\Ff}} \subseteq \Cc(A)$ is closed under colimits and contains all objects of the form $p_!Y$ for $(p\colon C\to A)\in\mathcal F$ and $Y\in\Cc(C)$. Conversely, every $\Ff$-torsion object $X$ in $\Cc(A)$ can be written as a colimit of objects of this form.
	\end{corollary}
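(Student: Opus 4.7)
My plan is to establish the three claims (closure under colimits, containment of the $p_!Y$'s, and the colimit presentation) essentially as immediate consequences of Lemma~\ref{lem:BorelObjectsOverBF} together with the projection formula for the tensoring of $\Cc$ over $\ul{\Spc}_P$.

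First I would dispatch closure under colimits by noting that Lemma~\ref{lem:BorelObjectsOverBF} exhibits $\Cc(A)_{\tors\Ff}$ as the essential image of the fully faithful left adjoint $i_!\colon\Cc(E\Ff)\hookrightarrow\Cc(A)$; since $\Cc(E\Ff)$ is presentable and $i_!$ preserves colimits, its essential image is closed under colimits formed in $\Cc(A)$.

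Next I would verify that $p_!Y$ is $\Ff$-torsion whenever $(p\colon C\to A)\in\Ff$. The key geometric input is that the pullback of $i\colon E\Ff\hookrightarrow A$ along $p$ is the terminal object in $\PSh(P)_{/C}$: indeed, viewing $E\Ff$ as the subpresheaf of $A$ spanned by maps landing in $\Ff$, a morphism $D\to C$ pulls back into $p^*E\Ff$ precisely when the composite $D\to C\to A$ is in $\Ff$, which holds for every such morphism because $\Ff$ is a sieve and $p\in\Ff$. Hence $p^*E\Ff\simeq 1_C$, and the projection formula gives
\[
p_!Y\otimes_A E\Ff \simeq p_!\bigl(Y\otimes_C p^*E\Ff\bigr) \simeq p_!Y,
\]
with this equivalence identifying the canonical map. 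Thus $p_!Y\in\Cc(A)_{\tors\Ff}$.

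For the converse, I would use the colimit presentation $E\Ff\simeq\colim_{(p\colon C\to A)\in\Ff}p_!(1_C)$ in $\PSh(P)_{/A}$ from Remark~\ref{rk:EF-colim}. Since the tensoring $-\otimes_A-$ preserves colimits in its space variable, for any $X\in\Cc(A)$ we have
\[
X\otimes_A E\Ff \simeq \colim_{(p\colon C\to A)\in\Ff} X\otimes_A p_!(1_C) \simeq \colim_{(p\colon C\to A)\in\Ff} p_!p^*X,
\]
where the second equivalence is another application of the projection formula together with the unit law $p^*X\otimes_C 1_C\simeq p^*X$. If $X$ is $\Ff$-torsion then the canonical map $X\otimes_A E\Ff\to X$ is an equivalence, so $X\simeq\colim_{p\in\Ff}p_!p^*X$, exhibiting $X$ as a colimit of objects of the required form.

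I do not anticipate a serious obstacle: the only subtle point is the identification $p^*E\Ff\simeq 1_C$, which is immediate once one unfolds the description of $E\Ff$ as a subpresheaf of $A$ and uses that $\Ff$ is a sieve. Everything else is a formal manipulation with the projection formula and the universal property of $i_!$.
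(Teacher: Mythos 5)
Your proof is correct and establishes all three claims. For the closure under colimits and the colimit presentation, your argument coincides with the paper's: both use Lemma~\ref{lem:BorelObjectsOverBF} to identify the torsion subcategory with the essential image of the colimit-preserving $i_!$, and both combine the colimit presentation of $E\Ff$ from Remark~\ref{rk:EF-colim} with the projection formula to write an $\Ff$-torsion $X$ as $\colim_{p\in\Ff} p_!p^*X$.

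The one place you diverge is in showing $p_!Y\in\Cc(A)_{\tors\Ff}$ for $p\in\Ff$. You compute $p^*E\Ff\simeq 1_C$ (using the sieve property) and then apply the projection formula to see directly that $p_!Y\otimes_A E\Ff\to p_!Y$ is an equivalence, which requires a small amount of care to verify that the projection formula equivalence is compatible with the collapse map $E\Ff\to A$. The paper shortcuts this: since $p\in\Ff$ factors as $C\to E\Ff\xrightarrow{i}A$ (which is equivalent to your observation that $p^*E\Ff\simeq 1_C$), we get $p_!\simeq i_!j_!$ for some $j$, so $p_!Y$ lands in the essential image of $i_!$, which is $\Cc(A)_{\tors\Ff}$ by the lemma — no tensor manipulations needed. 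Your approach is essentially an unpacking of the paper's factorization into an explicit computation; both are valid, but the factorization argument avoids the naturality bookkeeping around the projection formula.
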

	\begin{proof}
            The first two claims follow from the identification of $\Cc(A)_{\tors{\Ff}}$ with the essential image of $i_!$, as $i_!$ preserves colimits and every morphism $p\colon C \to A$ in $\Ff$ factors through $i\colon E\mathcal F\to A$ (see Remark~\ref{rk:EF-colim}), so that $p_!$ factors through $i_!$. For the final claim, we write $E\Ff\simeq\colim_{(p\colon C\to A)\in\mathcal F}p_!C$ as in Remark~\ref{rk:EF-colim} and consider for every $\Ff$-torsion object $X$ the composite equivalence
		\begin{equation*}
			X\simeq X\otimes_A E\mathcal F\simeq\mathop{\text{colim}}\limits_{p\in\mathcal F}X\otimes_Ap_!C\simeq\mathop{\text{colim}}\limits_{p\in\mathcal F} p_!(p^*X\otimes_CC)\simeq\mathop{\text{colim}}\limits_{p\in\mathcal F} p_!p^*X,
		\end{equation*}
		where the penultimate equivalence uses the projection formula.
	\end{proof}

	\section{The abstract Adams isomorphism}\label{sec:adams-iso-gen}
	Throughout this section, we fix a small category $T$ and an atomic orbital subcategory $P\subseteq T$. The goal of this section is to prove an abstract Adams isomorphism in a $P$-presentable $P$-semiadditive $T$-category $\Cc$: under suitable assumptions on a map $f\colon A \to B$ in $T$, we will construct a map $\Nm_f\colon f_!^{\fr}(X) \to f_*(X)$ from the \textit{partially defined} left adjoint $f_!^{\fr}$ of $f^*$ to its right adjoint $f_*$, and show that it is an equivalence. One subtlety here is that the functor $f^*\colon \Cc(B) \to \Cc(A)$ need not have a fully defined left adjoint. Instead, we will show that it at least exists on a certain class of \textit{$f$-free objects}:

	\begin{definition}
		\label{def:fBorelObjects}
		For a morphism $f\colon A \to B$ in $T$, recall from \Cref{ex:PFamilyFf} the $P$-family $\Ff_f \subseteq P_{/A}$ consisting of those maps $p\colon C \to A$ in $P$ such that also the composite $fp\colon C \to B$ is in $P$. We will denote by
		\[
		      \Cc(A)_{\free{f}} \subseteq \Cc(A)
		\]
            the full subcategory of $\Ff_f$-torsion objects, in the sense of \Cref{def:FBorelObjects} applied to the underlying $P$-category, and refer to its objects as \textit{$f$-free objects}.
	\end{definition}

	\begin{example}\label{ex:P-free}
		If $f$ is itself a morphism in $P$, then $\Ff_f = P_{/A}$ and thus every object $X \in \Cc(A)$ is $f$-free.
	\end{example}

        \begin{example}
            Let $T = \Glo$, $P = \Orb$, and let $\Cc = \ul{\mySp}$ be the global category of equivariant spectra from Example~\ref{ex:equiv-spectra}. If $f$ is the projection $G \to G/N$ for a normal subgroup $N \trianglelefteqslant G$, then Example~\ref{ex:torsion-equiv} shows that a $G$-spectrum $X \in \mySp_G$ is $f$-free if and only if it is $N$-free in the sense of the introduction.
        \end{example}

	We now construct the promised partial left adjoint:

	\begin{lemma}
		\label{lem:PartialLeftAdjoint}
		Let $\Cc$ be $P$-presentable and let $f\colon A \to B$ be a map in $T$. There exists a unique functor $f_!^{\fr}\colon \Cc(A)_{\free{f}} \to \Cc(B)$ equipped with a natural equivalence
		\begin{equation}\label{eq:def-f!}
		\Hom_{\Cc(A)}(X,f^*Y) \simeq \Hom_{\Cc(B)}(f_!^{\fr}X,Y)
		\end{equation}
		of mapping spaces for $X \in \Cc(A)_{\free{f}}$ and $Y \in \Cc(B)$.
	\end{lemma}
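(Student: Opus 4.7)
The plan is to reduce the statement to a genuine adjunction between presentable categories, and then apply the adjoint functor theorem. Specifically, let $i\colon E\Ff_f \hookrightarrow A$ denote the canonical inclusion and set $j \coloneqq fi\colon E\Ff_f \to B$. By Lemma~\ref{lem:BorelObjectsOverBF}, the functor $i_!\colon \Cc(E\Ff_f) \to \Cc(A)_{\free{f}}$ is an equivalence with inverse $i^*$. Writing an $f$-free object $X$ as $X = i_!X'$ for $X' \coloneqq i^*X$, the $(i_!, i^*)$-adjunction gives a natural equivalence
\[
\Hom_{\Cc(A)}(X, f^*Y) \simeq \Hom_{\Cc(E\Ff_f)}(X', j^*Y).
\]
Thus it suffices to construct a left adjoint $j_!$ to $j^*\colon \Cc(B) \to \Cc(E\Ff_f)$, and then to set $f_!^\fr \coloneqq j_! \circ i^*$. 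Uniqueness of $f_!^\fr$ follows from uniqueness of $j_!$ as a left adjoint together with the Yoneda lemma.

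For the construction of $j_!$, I would use the description $E\Ff_f \simeq \colim_{(p\colon C \to A)\in\Ff_f} C$ from Remark~\ref{rk:EF-colim}, under which the canonical map $j$ to $B$ is given on each vertex by the composite $fp$. Limit-preservation of $\Cc$ then yields an identification
\[
\Cc(E\Ff_f) \simeq \lim_{(p\colon C \to A)\in\Ff_f^\op}\Cc(C),
\]
under which $j^*$ corresponds to the canonical functor $Y \mapsto \bigl((fp)^*Y\bigr)_p$. By definition of $\Ff_f$, both $p$ and $fp$ lie in $P$ for every $p \in \Ff_f$, so the structure maps of the diagram $\Ff_f^\op \to \Cat$ are themselves right adjoints, making the limit presentable; and each $(fp)^*$ admits a left adjoint $(fp)_!$ by $P$-presentability. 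Since limits in the limit category are computed componentwise and each $(fp)^*$ preserves limits (being a right adjoint), the functor $j^*$ preserves limits; and it is accessible as a composite of $\PrL$-morphisms between presentable categories. The adjoint functor theorem then produces the desired left adjoint $j_!$. A direct Yoneda calculation moreover yields the explicit formula $j_!((X_p)_p) \simeq \colim_{p\in\Ff_f}(fp)_!X_p$, compatible with Corollary~\ref{cor:BorelObjectsGenerated}.

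The main technical subtlety is verifying that $j^*$ preserves limits, since a priori it is only guaranteed to preserve colimits (as a $\PrL$-morphism). This is precisely where the $f$-freeness condition enters: the requirement that $fp \in P$ (rather than merely $p \in P$) for every $p \in \Ff_f$ ensures that each component $(fp)^*$ in the limit decomposition of $j^*$ is itself a right adjoint, and therefore limit-preserving.
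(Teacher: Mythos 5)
Your proposal is correct, but it takes a genuinely different route from the paper's. The paper's argument is shorter and more elementary: it reduces, via Corollary~\ref{cor:BorelObjectsGenerated}, to the case $X = p_!X'$ with $p\colon C\to A$ in $\Ff_f$, in which case $\Hom(X,f^*Y)\simeq\Hom(X',(fp)^*Y)$ is visibly corepresented by $(fp)_!X'$ (using $fp\in P$); the general case follows because corepresentable functors are closed under limits. You instead identify $\Cc(A)_{\free{f}}\simeq\Cc(E\Ff_f)$ via Lemma~\ref{lem:BorelObjectsOverBF}, reduce the problem to producing a left adjoint to $j^*=(fi)^*\colon\Cc(B)\to\Cc(E\Ff_f)$, expand $\Cc(E\Ff_f)$ as the limit $\lim_{p\in\Ff_f^\op}\Cc(C_p)$ (Corollary~\ref{cor:torsion-limit}), and then invoke the adjoint functor theorem after checking limit-preservation componentwise. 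Both approaches hinge on the same crucial input (that $fp\in P$ for $p\in\Ff_f$, so that $(fp)^*$ is a right adjoint by $P$-presentability), and both recover the same formula for the partial left adjoint on generators and via a colimit. Your route is heavier machinery but makes the intermediate fact explicit that $\Cc(E\Ff_f)$ is presentable and $j^*$ is simultaneously a left and a right adjoint; the paper's is leaner and needs only corepresentability.

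One minor imprecision worth fixing: you write that the structure maps of $\Ff_f^\op\to\Cat$ being right adjoints ``makes the limit presentable.'' These are really two separate points. Presentability of the limit comes from the transition functors $q^*$ being \emph{left} adjoints (colimit-preserving), i.e.\ from $\PrL$ being closed under limits in $\Cat$; the fact that they are additionally \emph{right} adjoints (because the morphisms $q$ of the sieve $\Ff_f\subseteq P_{/A}$ are themselves in $P$, so $q_!$ exists by $P$-presentability) is what ensures limits in $\lim_p\Cc(C_p)$ are computed componentwise, which is what you then use to check that $j^*$ preserves limits. Both facts hold in your setup, so the argument stands, but the logic should be separated.
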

	\begin{proof}
		It suffices to show that the left hand side of $(\ref{eq:def-f!})$ is corepresentable for every $f$-free $X$. As corepresentable functors are closed under limits, we may assume by Corollary~\ref{cor:BorelObjectsGenerated} that $X=p_!X'$ for some $(p\colon C\to A)\in\Ff_f$ and $X'\in\Cc(C)$, in which case $\Hom(X,f^*Y)\simeq\Hom(X',p^*f^*Y)\simeq\Hom(X',(fp)^*Y)$. By definition of $\Ff_f$, the map $fp\colon C\to B$ lies in $P$, so $(fp)_!X$ is the desired corepresenting object.
	\end{proof}

	We now come to the comparison map $f_!^{\fr}\to f_*$, which is a slight modification of the norm construction of Hopkins and Lurie \cite{hopkins2013ambidexterity}*{Construction~4.1.8}. One obstacle we will encounter here is that the right adjoints do not satisfy base change in general (cf.~Remark~\ref{rk:basechange}). However, we at least have the following weaker statement:

	\begin{lemma}
        \label{lem:Beck_Chevalley_Ff_Weak_Equivalence}
		Let $\Cc$ be a $P$-presentable $T$-category and let
		\begin{equation*}
			\begin{tikzcd}
				X\arrow[d, "g'"']\arrow[dr,phantom,"\lrcorner"{very near start}]\arrow[r, "f'"] & Y\arrow[d,"g"]\\
				A\arrow[r, "f"'] & B
			\end{tikzcd}
		\end{equation*}
		be a pullback in $\PSh(T)$ such that $f$ is a map in $T$. Then the Beck-Chevalley map $f^*g_*\to g'_*(f')^*$ is an $\mathcal F_f$-weak equivalence (cf.~Remark~\ref{rk:F-we}).
		\begin{proof}
			Let $p\colon C\to A$ be a map in $P$ such that $fp$ belongs to $P$. Then in the iterated pullback
			\begin{equation*}
				\begin{tikzcd}
					C\times_AX\arrow[r]\arrow[d]\arrow[dr,phantom,"\lrcorner"{very near start}]&X\arrow[d, "g'"']\arrow[dr,phantom,"\lrcorner"{very near start}]\arrow[r, "f'"] & Y\arrow[d,"g"]\\
					C\arrow[r, "p"'] & A\arrow[r, "f"'] & B
				\end{tikzcd}
			\end{equation*}
			the Beck-Chevalley maps associated to the left hand square and to the total rectangle are equivalences by $P$-presentability. Thus, $2$–out-of-$3$ implies that the Beck-Chevalley map of the right hand square is inverted by $p^*$.
		\end{proof}
	\end{lemma}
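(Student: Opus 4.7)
The goal is, given $p\colon C \to A$ in $\Ff_f$ (so that both $p$ and the composite $fp$ lie in $P$), to show that $p^*$ applied to the Beck--Chevalley map $f^*g_*\to g'_*(f')^*$ is an equivalence. Following the hint in the excerpt, I would set this up via the iterated pullback
\[
\begin{tikzcd}
Z \arrow[r, "p''"] \arrow[d, "g''"'] \arrow[dr, pullback] & X \arrow[r, "f'"] \arrow[d, "g'"'] \arrow[dr, pullback] & Y \arrow[d, "g"] \\
C \arrow[r, "p"'] & A \arrow[r, "f"'] & B,
\end{tikzcd}
\]
where $Z \coloneqq C\times_A X$; by the pasting lemma the outer rectangle is again a pullback in $\PSh(T)$.

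The strategy is to combine two standard facts from the calculus of mates (Appendix~\ref{sec:BC}): first, the Beck--Chevalley map of the outer rectangle is the horizontal pasting of the Beck--Chevalley maps of the two smaller squares; second, on any pullback square for which all four restrictions admit both left and right adjoints, the right-adjoint Beck--Chevalley transformation is the mate of the left-adjoint one, so one is an equivalence precisely when the other is. Given these, a two-out-of-three argument reduces the problem to showing that the right-adjoint Beck--Chevalley maps for the \emph{left} square and for the \emph{outer} rectangle are equivalences.

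The key observation is that in each of these two squares, the bottom horizontal lies in $P$: we have $p \in P$ for the left square and $fp \in P$ for the outer rectangle, both by the definition of $\Ff_f$. By orbitality, the corresponding top horizontal maps (in particular $p''$ and its composite with $f'$) lie in $\ulfinPsets$, so $P$-presentability of $\Cc$ supplies the left-adjoint Beck--Chevalley equivalences for these two squares, and the mate transfer then promotes them to the right-adjoint Beck--Chevalley equivalences needed above.

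The main obstacle is precisely the point flagged in Remark~\ref{rk:basechange}: right-adjoint Beck--Chevalley is not automatic for $P$-presentable $T$-categories, so one cannot invoke it directly for the right-hand square in question. The workaround is the mate transfer combined with the observation that restricting to $\Ff_f$ places \emph{all} horizontal maps one needs to control inside $\ulfinPsets$, which is exactly the regime where $P$-presentability does supply the dual left-adjoint Beck--Chevalley equivalence.
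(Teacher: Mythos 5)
Your proposal is correct and follows essentially the same route as the paper: decompose via the iterated pullback over $C \to A \to B$ and apply two-out-of-three to the horizontal pasting of Beck--Chevalley transformations. You make explicit the total-mate (conjugate) transfer from the left-adjoint $\BC_!$ equivalences supplied by $P$-presentability (using that $p$ and $fp$ lie in $P$) to the right-adjoint $\BC_*$ equivalences needed for the left square and the outer rectangle, a step the paper compresses into the phrase ``by $P$-presentability.''
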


	\begin{construction}
		Let $f\colon A\to B$ be a map in $\PSh(T)$ and consider the pullback
		\begin{equation*}
			\begin{tikzcd}
				A\times_BA\arrow[dr,phantom,"\lrcorner"{very near start}]\arrow[r, "\pr_1"]\arrow[d, "\pr_2"'] & A\arrow[d, "f"]\\
				A\arrow[r, "f"'] & B\rlap.
			\end{tikzcd}
		\end{equation*}
		Assume that the diagonal $\Delta\colon A\to A\times_BA$ is a map in $\ul{\mathbb F}^P_T$. Let $\Cc$ be $P$-semiadditive, and assume that the required right adjoints exist in the following zig-zag:
            \begin{equation}\label{eq:Nm-zig-zag} \id\simeq\pr_{1*}\Delta_*\Delta^*\pr_1^*\xrightarrow[\raise2.5pt\hbox{$\scriptstyle\sim$}]{\Nm_\Delta^{-1}}\pr_{1*}\Delta_!\Delta^*\pr_1^*\xrightarrow{\,\epsilon\,}\pr_{1*}\pr_2^*\xleftarrow{\text{BC}_*} f^*f_*.
		\end{equation}
        \begin{enumerate}[(a)]
            \item We write $N\colon \id\to\pr_{1*}\pr_2^*$ for the composite of the first three maps in \eqref{eq:Nm-zig-zag}.
            \item If $\Cc$ is in addition $P$-presentable, then \Cref{lem:Beck_Chevalley_Ff_Weak_Equivalence} shows that the Beck-Chevalley map $\BC_*\colon f^*f_*\to \pr_{1*}\pr_2^*$ becomes an equivalence after applying $i^*\colon \Cc(A) \to \Cc(E\Ff_f)$. Applying $i^*$ to \eqref{eq:Nm-zig-zag} thus gives a map
            \[
                \smash{\Nmadjdual}^{\fr}_f\colon i^* \to i^*f^*f_*.
            \]
            \item By restricting $\smash{\Nmadjdual}^{\fr}_f$ along $i_!$ and passing to adjoints, we obtain a map $(fi)_! \to f_*i_!$ of functors $\Cc(E\Ff_f) \to \Cc(B)$, which we may equivalently view as a natural transformation
            \[
                \Nm_f^{\fr}\colon f_!^{\fr} \to f_*
            \]
            of functors $\Cc(A)_{\free{f}} \to \Cc(B)$.
        \end{enumerate}
	\end{construction}

	\begin{remark}
		If $\Cc$ is $P$-semiadditive and $T$-presentable, the Beck-Chevalley map $f^*f_*\to \pr_{1*}\pr_2^*$ is already an equivalence before applying $i^*$, and so \eqref{eq:Nm-zig-zag} determines a map $\overline{\Nm}_f\colon \id_{\Cc(A)}\to f^*f_*$ with $i^*\overline{\Nm}_f=\smash{\Nmadjdual}^{\fr}_f$. Moreover, $f^*$ admits a globally defined left adjoint $f_!$, and $\Nmadjdual_f$ adjoints to a map $\Nm_f\colon f_!\to f_*$ restricting to $\Nm^{\fr}_f$ on $\Cc(A)_{\free{f}}$. The maps $\smash{\Nmadjdual}_f$ and $\Nm_f$ are then literally instances of (the dual of) the norm construction of Hopkins and Lurie \cite{hopkins2013ambidexterity}*{Construction~4.1.8}.
	\end{remark}

	We are now ready to state our main result:

	\begin{theorem}[Abstract Adams isomorphism]\label{thm:adams-gen}
		Let $\Cc$ be a $P$-semiadditive $P$-presentable $T$-category and let $f\colon A\to B$ be a map in $T$ such that the diagonal $A\to A\times_BA$ lies in $\ul{\mathbb F}^P_T$. Then:
		\begin{enumerate}
            \item \label{item:ag-generators}For any $p\colon C\to A$ in $P$ such that also $fp$ is a map in $P$ and every $X\in\Cc(A)$ in the essential image of $p_!$, the map $\Nm_f^{\fr}\colon f^{\fr}_! X \to f_* X$ is an equivalence.
			\item If $f_*$ preserves colimits, then $\Nm_f^{\fr}\colon f^{\fr}_!X\to f_*X$ is an equivalence for every $f$-free object $X$.
		\end{enumerate}
	\end{theorem}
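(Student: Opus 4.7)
First I would handle part~(2) assuming~(1). By Corollary~\ref{cor:BorelObjectsGenerated}, every $f$-free object $X$ can be written as $X\simeq\colim_i (p_i)_!Y_i$ with $p_i\in\mathcal F_f$ and $Y_i\in\Cc(C_i)$. The partial left adjoint $f_!^\fr$ preserves such colimits---this follows from the adjunction~\eqref{eq:def-f!} and the Yoneda lemma, using that $\Cc(A)_{\free{f}}$ is closed under colimits in $\Cc(A)$---while $f_*$ is cocontinuous by hypothesis. Since $\Nm_f^\fr$ is a natural transformation between cocontinuous functors that is an equivalence on the generators $(p_i)_!Y_i$, it is an equivalence everywhere.

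For part~(1), set $q\coloneqq fp\colon C\to B$, which lies in $P$ by the definition of $\mathcal F_f$. The proof of Lemma~\ref{lem:PartialLeftAdjoint} provides a canonical equivalence $f_!^\fr(p_!Y)\simeq q_!Y$. Since both $p$ and $q$ lie in $P$, the $P$-semiadditivity of $\Cc$ supplies equivalences $\Nm_p\colon p_!\iso p_*$ and $\Nm_q\colon q_!\iso q_*$; combined with the canonical identification $q_*\simeq f_*p_*$ these yield an abstract equivalence
\[q_!Y\xrightarrow[\sim]{\Nm_q}q_*Y\simeq f_*p_*Y\xleftarrow[\sim]{f_*\Nm_p}f_*p_!Y.\]
Part~(1) is therefore equivalent to the statement that this composite coincides with $\Nm_f^\fr(p_!Y)$.

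To establish this identification, I would trace the zig-zag~\eqref{eq:Nm-zig-zag} defining $\Nmadjdual^\fr_f$ at $X=p_!Y$: the projection formula and Beck--Chevalley for the pullback of $p$ along $\pr_1\colon A\times_BA\to A$ allow one to rewrite $\pr_{1*}\pr_2^*p_!Y$, and together with Lemma~\ref{lem:Beck_Chevalley_Ff_Weak_Equivalence}---which inverts the Beck--Chevalley map $f^*f_*\to\pr_{1*}\pr_2^*$ after restriction along $i^*$---this identifies the zig-zag with (the $i^*$-image of) the norm for the composite $q$. Passing to adjoints then produces the equality $\Nm_f^\fr(p_!Y)\simeq f_*(\Nm_p)\circ\Nm_q$. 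I expect this mate calculation---morally the classical compatibility $\Nm_{fp}\simeq f_*(\Nm_p)\circ\Nm_f(p_!)$ of norms under composition---to be the main obstacle, since $\Nm_f^\fr$ is itself only a partial norm defined via a zig-zag through the diagonal $\Delta$, so careful bookkeeping of Beck--Chevalley mates across several nested pullback squares will be required; Appendix~\ref{sec:BC} provides the natural setting for this.
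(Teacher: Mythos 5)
Your reduction of part~(2) to part~(1) is essentially identical to the paper's: use Corollary~\ref{cor:BorelObjectsGenerated} to write an $f$-free $X$ as a colimit of $(p_i)_!Y_i$, observe that $f_!^{\fr}$ preserves colimits out of $\Cc(A)_{\free{f}}$ by its universal property, and invoke the cocontinuity hypothesis on $f_*$.

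For part~(1), however, you take a genuinely different route. You propose to verify the identity $\Nm_f^{\fr}(p_!Y)\simeq f_*(\Nm_p)^{-1}\circ(\text{can})\circ\Nm_{fp}$ by a direct mate calculation, tracing the defining zig-zag \eqref{eq:Nm-zig-zag} at $X=p_!Y$. You correctly identify the target identity --- it is exactly what Lemma~\ref{lemma:Adams-T-complete} records in the $T$-presentable case via \cite[Remark~4.2.4$^{\op}$]{hopkins2013ambidexterity} --- and you correctly flag the mate bookkeeping as the main obstacle. But you have not carried that obstacle out, and I want to stress why it is more serious than your phrasing ("careful bookkeeping") suggests. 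The Hopkins--Lurie composition formula for norms is proved under the assumption that the Beck--Chevalley map $f^*f_*\to\pr_{1*}\pr_2^*$ is an honest equivalence, i.e., that the right adjoints satisfy base change. In the merely $P$-presentable setting this fails (cf.\ Remark~\ref{rk:basechange}); Lemma~\ref{lem:Beck_Chevalley_Ff_Weak_Equivalence} only yields it after applying $i^*$, so every step of the composition argument must be rerun while carrying the restriction $i^*$ along. That is in effect a fresh proof of the norm-composition compatibility in the fractional setting, and it is precisely what the paper is designed to sidestep. The paper instead proves part~(1) by an embedding trick: Lemma~\ref{lemma:embed} produces a $T$-presentable $P$-semiadditive $\Dd$ receiving a fully faithful $P$-semiadditive $F\colon\Cc\hookrightarrow\Dd$; in $\Dd$ one has the honest norm $\Nm_f$ and Lemma~\ref{lemma:Adams-T-complete} applies directly by citing Hopkins--Lurie; Proposition~\ref{prop:nm-lambda-preserved} and Lemma~\ref{lemma:BC-inv} then show that $F$ intertwines the fractional norm on $\Cc$ with the honest one on $\Dd$, and the conclusion is transferred back by full faithfulness. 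The cost of the paper's route is the technical Proposition~\ref{prop:nm-lambda-preserved}, which is a self-contained mate calculation \emph{without} any $i^*$'s floating around; the benefit is that one never has to touch the composition formula in the fractional world. Your route is plausible but, as written, leaves the central step unproved, and there is no indication it would end up shorter than the paper's embedding argument.
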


	While the proof will be given below after some preparations, let us already note the following consequence in the stable setting:

	\begin{corollary}\label{cor:Adams-stable-compact}
		Let $\Cc$ be $P$-stable and $P$-presentable, and let $f\colon A\to B$ be a map in $T$ whose diagonal belongs to $\ul{\mathbb F}^P_T$. Then $\Nm_f^{\fr}\colon f^{\fr}_!X\to f_*X$ is an equivalence for every \emph{compact} $f$-free object $X$.
		\begin{proof}
			By Corollary~\ref{cor:BorelObjectsGenerated}, any $X\in\Cc(A)_{\free{f}}$ can be written as $\colim_{k\in K} p_{k!}X_k$ for some category $K$, maps $p_k\colon A_k\to A$ in $P$ with $fp_k$ in $P$, and objects $X_k\in\Cc(A_k)$. Filtering $K$ by finite categories, we can hence write $X$ as a filtered colimit of objects of the form $\colim_{k\in K} p_{k!}X_k$ with $p_k, X_k$ as above and such that $K$ is in addition \emph{finite}. If $X$ is now also compact, it is therefore a retract of such a finite colimit.

			By the theorem, $\Nm_f^{\fr}\colon f_!(p_{k!}X_k)\to f_*(p_{k!}X_k)$ is an equivalence for every $k$. The claim follows as $f_!$ and $f_*$ preserve retracts and \emph{finite} colimits (by stability).
		\end{proof}
	\end{corollary}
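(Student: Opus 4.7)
The plan is to reduce the corollary to part~\ref{item:ag-generators} of Theorem~\ref{thm:adams-gen}, which handles objects in the essential image of $p_!$ for suitable $p\colon C\to A$. The strategy is to use Corollary~\ref{cor:BorelObjectsGenerated} to write an arbitrary $f$-free object as a colimit of such generators, then exploit compactness to cut this down to a retract of a \emph{finite} colimit of generators, and finally invoke $P$-stability to propagate the Adams equivalence from the generators to such a retract.

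More concretely, I would first apply Corollary~\ref{cor:BorelObjectsGenerated} to the $f$-free object $X$ to obtain an expression $X\simeq \colim_{k\in K} p_{k!}Y_k$ over some small indexing category $K$, where each $p_k\colon A_k\to A$ lies in $P$ with $fp_k$ also in $P$, and $Y_k\in\Cc(A_k)$. Next, I would filter $K$ by its finite subcategories, writing $K\simeq\colim_\alpha K_\alpha$; rearranging the double colimit presents $X$ as a filtered colimit of the finite partial colimits $Z_\alpha\coloneqq\colim_{k\in K_\alpha}p_{k!}Y_k$. If $X$ is compact in $\Cc(A)$, the identity of $X$ factors through some $Z_\alpha$, exhibiting $X$ as a retract of a finite colimit of objects of the form $p_{k!}Y_k$.

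It then remains to check that the natural transformation $\Nm_f^{\fr}$ is closed under finite colimits and under retracts, since part~\ref{item:ag-generators} of Theorem~\ref{thm:adams-gen} already supplies the equivalence on each generator $p_{k!}Y_k$. Closure under colimits on the left is automatic: $f_!^{\fr}$ is a partially defined left adjoint (Lemma~\ref{lem:PartialLeftAdjoint}), and $\Cc(A)_{\free f}$ is closed under colimits in $\Cc(A)$ by Corollary~\ref{cor:BorelObjectsGenerated}, so $f_!^{\fr}$ commutes with all colimits that exist in its domain. The nontrivial point, and what I expect to be the crux of the proof, is that $f_*$ preserves \emph{finite} colimits without any cocontinuity hypothesis: here $P$-stability enters decisively, since in a stable category finite colimits coincide with finite limits, and $f_*$ as a right adjoint preserves all limits. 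Retracts are preserved by any functor. This is precisely why stability allows us to trade the global cocontinuity hypothesis on $f_*$ in Theorem~\ref{thm:adams-gen} for a local compactness assumption on $X$.
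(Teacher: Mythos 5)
Your proposal is correct and follows essentially the same route as the paper: decompose the $f$-free object via Corollary~\ref{cor:BorelObjectsGenerated}, use compactness to realize $X$ as a retract of a finite colimit of generators $p_{k!}Y_k$, apply part~\ref{item:ag-generators} of Theorem~\ref{thm:adams-gen} on the generators, and conclude since $f_!^{\fr}$ and $f_*$ preserve retracts and finite colimits, the latter by stability. Your added remarks (why the partial left adjoint $f_!^{\fr}$ preserves colimits existing in the colimit-closed subcategory $\Cc(A)_{\free f}$, and that stability converts finite colimits into limits preserved by the right adjoint $f_*$) just spell out what the paper summarizes in its final sentence.
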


	We now turn to the proof of Theorem~\ref{thm:adams-gen}. We begin with a $T$-presentable version of the first statement:

	\begin{lemma}\label{lemma:Adams-T-complete}
		Let $\Cc$ be $T$-presentable and $P$-semiadditive, let $p\colon C\to A$ be a map in $P$, and let $f\colon A\to B$ be a map in $T$ such that also $fp$ belongs to $P$. Then ${\Nm}_f\colon f_!X=f^{\fr}_!X\to f_*X$ is an equivalence whenever $X \in \Cc(A)$ lies in the essential image of $p_!\colon \Cc(C) \to \Cc(A)$.
		\begin{proof}
         	Writing $X = p_!Y$ for some $Y\in\Cc(C)$, \cite{hopkins2013ambidexterity}*{Remark~4.2.4${}^\op$} shows that
			\begin{equation*}
				(fp)_!Y\simeq f_!p_!Y\xrightarrow{\Nm_f} f_*p_!Y\xrightarrow{\Nm_p} f_*p_*Y\simeq (fp)_*Y
			\end{equation*}
			agrees with ${\Nm}_{fp}$. As both $\Nm_{fp}$ and $\Nm_p$ are equivalences by $P$-semiadditivity, the claim follows by $2$-out-of-$3$.
		\end{proof}
	\end{lemma}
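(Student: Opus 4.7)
The plan is to reduce everything to the already-established $P$-semiadditivity of $\Cc$, exploiting that both $p$ and $fp$ lie in $P$ by assumption. The $T$-presentability hypothesis is exactly what lets us sidestep any partial-adjoint technicalities: the left adjoint $f_!$ is globally defined and the right adjoints satisfy full Beck--Chevalley, so the construction of $\Nm_f$ from the zig-zag \eqref{eq:Nm-zig-zag} produces a bona fide natural transformation $f_!\to f_*$ (coinciding with $\Nm_f^{\fr}$ on $\Cc(A)_{\free f}$).

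Writing $X = p_!Y$ for some $Y\in\Cc(C)$, I would identify $f_!X \simeq (fp)_!Y$ and $(fp)_*Y \simeq f_*p_*Y$ via the composition equivalences for adjoints. The crucial input is the \emph{compositionality} of the norm construction: under these identifications, $\Nm_{fp}(Y)$ should factor as the whiskered composite
\[
(fp)_!Y \;\xrightarrow{\Nm_f(p_!Y)}\; f_*p_!Y \;\xrightarrow{f_*\Nm_p(Y)}\; (fp)_*Y.
\]
This is a standard fact of parametrized ambidexterity and I would simply cite it from Hopkins--Lurie rather than re-derive it by unwinding the definition of the norm in terms of diagonals and counits.

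With this factorization in hand, the conclusion is immediate from two-out-of-three: the total composite $\Nm_{fp}(Y)$ is an equivalence since $fp\in P$ and $\Cc$ is $P$-semiadditive, and the second factor $f_*\Nm_p(Y)$ is an equivalence since $p\in P$ for the same reason. Hence the first factor, which is precisely $\Nm_f$ evaluated at $X = p_!Y$, must also be an equivalence.

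The only nontrivial ingredient is the compositionality statement for norms, which I expect to be the main obstacle to write down cleanly; however, it is a purely formal categorical feature of the ambidexterity framework and can be outsourced to the reference rather than re-proved. Note also that the argument fails without $T$-presentability: for a merely $P$-presentable $\Cc$, the map $\Nm_f$ exists only after restriction to $E\mathcal F_f$, and identifications like $f_!p_!\simeq (fp)_!$ require more care since $f_!$ is only partially defined --- this is presumably why the general case of Theorem~\ref{thm:adams-gen} needs further work beyond this lemma.
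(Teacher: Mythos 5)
Your proposal is correct and follows essentially the same route as the paper's proof: write $X = p_!Y$, invoke compositionality of the norm construction (the paper cites \cite{hopkins2013ambidexterity}*{Remark~4.2.4${}^\op$} for precisely this factorization of $\Nm_{fp}$ as $f_*\Nm_p \circ \Nm_f$), and conclude by $2$-out-of-$3$ using $P$-semiadditivity for $\Nm_p$ and $\Nm_{fp}$. Your remarks on why $T$-presentability is needed and why the general case requires the embedding trick also accurately anticipate the structure of the paper's argument.
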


	We will now deduce the theorem by reducing to the above lemma via an embedding trick.

	\begin{lemma}\label{lemma:embed}
		Let $\Cc$ be a small $P$-semiadditive $T$-category. Then there exists a $T$-presentable $P$-semiadditive $T$-category $\Dd$ together with a fully faithful $P$-semiadditive functor $\Cc\to\Dd$.
		\begin{proof}
			The Yoneda embedding $y\colon\Cc\to\ul\Fun_T(\Cc^\op,\ul\Spc_T)$ of \cite{martini2021yoneda} is fully faithful and preserves finite $P$-products, see \cite{martini2021yoneda}*{Corollary~4.7.16} and \cite{martiniwolf2021limits}*{Proposition~4.4.8}. We now take $P$-commutative monoids in the sense of \cite{CLL_Global}*{Definition~4.8.1} on both sides, yielding a $P$-semiadditive functor
			\begin{equation}\label{eq:emb-cmon}
			\ul\CMon^P(y)\colon\ul\CMon^P(\Cc)\to\ul\CMon^P\big(\ul\Fun_T(\Cc^\op,\ul\Spc_T)\big)
			\end{equation}
			of $P$-semiadditive $T$-categories. As $\ul\CMon^P(-)$ is defined as a full subcategory of a functor category, $\ul\CMon^P(y)$ is still fully faithful by \cite{martini2021yoneda}*{Proposition~3.8.4}.

			Now \cite{CLL_Global}*{Proposition~4.8.3} shows that the source of $(\ref{eq:emb-cmon})$ is equivalent to just $\Cc$ itself while Proposition~4.6.17 of \emph{op.\ cit.}~shows that its target is $T$-presentable. The composite $\Cc\simeq\ul\CMon^P(\Cc)\to\ul\CMon^P\big(\ul\Fun_T(\Cc^\op,\ul\Spc_T)\big)$ therefore has the required properties.
		\end{proof}
	\end{lemma}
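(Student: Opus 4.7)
The goal is to embed an arbitrary small $P$-semiadditive $T$-category into one that is moreover $T$-presentable, while preserving the $P$-semiadditive structure and full faithfulness. My plan is to combine the parametrized Yoneda embedding with the universal construction of $P$-commutative monoids developed in \cite{CLL_Global}.

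First, I would invoke the parametrized Yoneda embedding $y\colon \Cc \to \ul{\Fun}_T(\Cc^\op, \ul{\Spc}_T)$ of \cite{martini2021yoneda}. By \cite{martini2021yoneda}*{Corollary~4.7.16} this is fully faithful, and by \cite{martiniwolf2021limits}*{Proposition~4.4.8} it preserves all (existing) $T$-limits, so in particular finite $P$-products. Under the equivalence between preserving $P$-products and $P$-coproducts for functors of $P$-semiadditive categories \cite{CLL_Global}*{Proposition~4.6.14}, this shows $y$ is $P$-semiadditive---modulo the caveat that the target is \emph{not yet} $P$-semiadditive, so one needs to phrase this as preservation of $P$-products only.

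The target $\ul{\Fun}_T(\Cc^\op, \ul{\Spc}_T)$ is $T$-presentable but not $P$-semiadditive, so I would correct this by applying the parametrized $P$-commutative monoid functor $\ul{\CMon}^P(-)$ from \cite{CLL_Global}*{Definition~4.8.1}. Functoriality yields
\[
    \ul{\CMon}^P(y) \colon \ul{\CMon}^P(\Cc) \to \ul{\CMon}^P\bigl(\ul{\Fun}_T(\Cc^\op, \ul{\Spc}_T)\bigr).
\]
Because $\Cc$ is already $P$-semiadditive, the source is equivalent to $\Cc$ by \cite{CLL_Global}*{Proposition~4.8.3}. The target is a $T$-presentable $P$-semiadditive $T$-category: $P$-semiadditivity holds by construction (it is always a feature of $\ul{\CMon}^P$), and $T$-presentability follows from the fact that $P$-commutative monoid objects in a $T$-presentable category are again $T$-presentable, i.e.\ \cite{CLL_Global}*{Proposition~4.6.17}. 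Taking $\Dd$ to be this target then yields the desired $P$-semiadditive $T$-presentable $T$-category.

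The only remaining point is full faithfulness of the composite $\Cc \simeq \ul{\CMon}^P(\Cc) \to \ul{\CMon}^P(\ul{\Fun}_T(\Cc^\op, \ul{\Spc}_T))$. This is not really an obstacle, since $\ul{\CMon}^P$ is constructed as a full parametrized subcategory of a functor category out of a fixed indexing category: fully faithful functors of $T$-categories are sent to fully faithful functors by postcomposition (e.g.\ \cite{martini2021yoneda}*{Proposition~3.8.4}), and $y$ is fully faithful by construction. The main conceptual input is thus really the pair of results in \cite{CLL_Global} identifying $\ul{\CMon}^P$ both as a fixed-point construction on the $P$-semiadditive side and as a construction preserving $T$-presentability on the underlying side; granted those, the argument slots together cleanly.
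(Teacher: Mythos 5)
Your proposal is correct and follows essentially the same route as the paper's proof: parametrized Yoneda embedding (with the same references for full faithfulness and preservation of $P$-products), followed by applying $\ul\CMon^P(-)$ to both sides, identifying the source with $\Cc$ via \cite{CLL_Global}*{Proposition~4.8.3}, establishing $T$-presentability of the target via Proposition~4.6.17 of \emph{op.\ cit.}, and full faithfulness via \cite{martini2021yoneda}*{Proposition~3.8.4}. The only cosmetic difference is your explicit remark about phrasing preservation of $P$-products before the target is known to be $P$-semiadditive, which the paper handles implicitly.
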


	To apply this, we need to understand how the right adjoint $f_*$ and our newly constructed norm map interact with this embedding:

	\begin{lemma}\label{lemma:BC-inv}
		Let $\Cc,\Dd$ be $T$-categories with finite $P$-products and let $F\colon\Cc\to\Dd$ preserve finite $P$-products. Let $f\colon B\to C$ be a map in $T$ such that both $f^*\colon\Cc(C)\to\Cc(B)$ and $f^*\colon\Dd(C)\to\Dd(B)$ admit right adjoints $f_*$, and let $p\colon A\to B$ be a map in $P$ such that also $fp$ belongs to $P$.

		Then the Beck-Chevalley map $Ff_*X\to f_*FX$ is an equivalence whenever $X \in \Cc(A)$ lies in the essential image of $p_*\colon \Cc(C) \to \Cc(A)$.
		\begin{proof}
			We may assume without loss of generality that $X=p_*Y$ with $Y\in\Cc(A)$. By the compatibility of Beck-Chevalley maps with composition (Lemma~\ref{lemma:BC-compose}${}^\op$), the composite $Ff_*p_*Y \to f_*Fp_*Y \to f_*p_*FY$ of Beck-Chevalley maps then agrees with the Beck-Chevalley map $Ff_*p_*Y\to f_*p_*FY$, which we can further identify up to conjugation by the obvious equivalences with the Beck-Chevalley map $F(fp)_*Y\to (fp)_*F$ (see Corollary~\ref{cor:BC-htpy-inv}${}^\op$). As $p$ and $fp$ are maps in $P$ and $F$ preserves finite $P$-products, the claim now simply follows by $2$-out-of-$3$.
		\end{proof}
	\end{lemma}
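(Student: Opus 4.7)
The plan is to reduce to the case $X \simeq p_* Y$ for some $Y \in \Cc(A)$ using the essential image hypothesis, and then to exploit that both $p$ and the composite $fp$ lie in $P$. Since $F$ is assumed to preserve finite $P$-products, the Beck--Chevalley transformations $F p_* \to p_* F$ and $F(fp)_* \to (fp)_* F$ are already equivalences, and so the remaining task is to relate the Beck--Chevalley map $Ff_*X \to f_*FX$ to these two known equivalences.

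The crucial step is to invoke the compatibility of Beck--Chevalley transformations with composition of adjoints. Applied to the horizontal pasting of the squares associated to $F$ against $f_*$ and against $p_*$, this should identify the composite
\[ Ff_*p_*Y \xrightarrow{\BC_*} f_*Fp_*Y \xrightarrow{\BC_*} f_*p_*FY \]
with the single Beck--Chevalley map $F(fp)_*Y \to (fp)_*FY$ arising from the identification $f_* p_* \simeq (fp)_*$. Granting this, the composite is an equivalence by the preservation of $(fp)_*$, the second arrow is an equivalence by the preservation of $p_*$, and two-out-of-three forces the first arrow---which is precisely the natural transformation $Ff_*X \to f_*FX$ evaluated at $X = p_*Y$---to be an equivalence as well.

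I expect the only real subtlety to lie in making the stacked-squares compatibility of Beck--Chevalley transformations rigorous in the parametrized $\infty$-categorical setting, since once that piece of mate calculus is recorded the argument becomes purely formal. This is the sort of bookkeeping presumably addressed in Appendix~\ref{sec:BC}, and I would simply cite the relevant lemma rather than reprove it here.
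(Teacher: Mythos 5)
Your proposal is correct and follows essentially the same route as the paper's proof: reduce to $X = p_*Y$, use compatibility of Beck--Chevalley transformations with composition to identify the composite $Ff_*p_*Y \to f_*Fp_*Y \to f_*p_*FY$ with the Beck--Chevalley map for $(fp)_*$, and conclude by $2$-out-of-$3$ using that $F$ preserves finite $P$-products. The only cosmetic difference is that the paper splits the identification into two steps (composition of squares, then conjugation by the equivalence $f_*p_*\simeq (fp)_*$), citing Lemma~\ref{lemma:BC-compose}${}^\op$ and Corollary~\ref{cor:BC-htpy-inv}${}^\op$ separately, whereas you merge them.
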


	\begin{proposition}\label{prop:nm-lambda-preserved}
		Let $F\colon\Cc\to\Dd$ be a $P$-semiadditive functor of $P$-semiadditive $T$-categories. Let $f\colon A\to B$ be a map in $T$ such that the diagonal $\Delta\colon A\to A\times_BA$ lies in $\ul{\mathbb F}^P_T$ and such that all the requisite adjoints exist to define $\smash{\Nmadjdual}^{\fr}_f$ for $\Cc$ and $\Dd$. Then the diagrams
		\begin{equation*}
			\begin{tikzcd}
				Ff^*f_* \arrow[d, "\simeq"']\arrow[r, "F(\textup{BC}_*)"] &[1em] F\pr_{1*}\pr_2^*\arrow[d, "\textup{BC}_*"]\\
				f^*Ff_*\arrow[d, "\textup{BC}_*"'] & \pr_{1*}F\pr_2^*\arrow[d, "\simeq"]\\
				f^*f_*F\arrow[r, "\textup{BC}_*"'] &  \pr_{1*}\pr_2^*F
			\end{tikzcd}
			\qquad\text{and}\qquad
			\begin{tikzcd}
				F\arrow[r, "F(N)"]\arrow[d, "N"'] & F\pr_{1*}\pr_2^*\arrow[d,"\textup{BC}_*"]\\
				\pr_{1*}\pr_2^*F \arrow[r, "\simeq"'] &\pr_{1*}F\pr_2^*
			\end{tikzcd}
		\end{equation*}
		commute up to homotopy.
		\begin{proof}
			For the first diagram we observe that by the compatibility of Beck-Chevalley maps with composition (Lemma~\ref{lemma:BC-compose}${}^\op$) the subcomposites $Ff^*f_*\to \pr_{1*}F\pr_{2^*}$ and $f^*Ff_*\to \pr_{1*}\pr_2^*F$ are given by the Beck-Chevalley transformations associated to the total rectangles in the two diagrams
			\begin{equation*}
				\begin{tikzcd}[cramped]
					\Cc(B)\arrow[r, "f^*"]\arrow[d, "f^*"'] &[-.5em] \Cc (A)\arrow[d, "\pr_2^*"{description}] \arrow[r, "F"] &[-.5em] \Dd(A)\arrow[d, "\pr_2^*"]\\
					\Cc(A)\arrow[r, "\pr_2^*"'] & \Cc(A\times_BA) \arrow[r, "F"'] & \Dd(A\times_BA)
				\end{tikzcd}
				\qquad\quad
				\begin{tikzcd}[cramped]
					\Cc(B)\arrow[r, "F"]\arrow[d, "f^*"'] &[-.5em] \Dd (B)\arrow[d, "f^*"{description}] \arrow[r, "f^*"] &[-.5em] \Dd(A)\arrow[d, "\pr_2^*"]\\
					\Cc(A)\arrow[r, "F"'] & \Dd(A) \arrow[r, "\pr_2^*"'] & \Dd(A\times_BA)\rlap.
				\end{tikzcd}
			\end{equation*}
			As the total rectangles only differ up to the naturality equivalences $Ff^*\simeq f^*F$ and $F\pr_2^*\simeq\pr_2^*F$, the first statement now follows from Corollary~\ref{cor:BC-htpy-inv-vert}${}^\op$.

			For the second statement we consider the diagram
			\begin{equation*}\hskip-8.94pt\hfuzz=9pt
				\begin{tikzcd}[cramped]
					F\arrow[r, "\simeq"]\arrow[dd, "\simeq"']\arrow[ddr,phantom,"(*)"{description,xshift=-9pt}] & F\pr_{1*}\Delta_*\Delta^*\pr_{2}^*\arrow[d, "\text{BC}_*"]\arrow[r, "\Nm_\Delta^{-1}"] &[1em] F\pr_{1*}\Delta_!\Delta^*\pr_{2}^*\arrow[r, "\epsilon"]\arrow[d, "\text{BC}_*"] & F\pr_{1*}\pr_2^*\arrow[d, "\text{BC}_*"] \\
					& \pr_{1*}F\Delta_*\Delta^*\pr_2^*\arrow[r, "\Nm_\Delta^{-1}"{description}]\arrow[d, "\text{BC}_*"]\arrow[dr,phantom, "(\dagger)"{description, yshift=-1pt}] & \pr_{1*}F\Delta_!\Delta^*\pr_2^*\arrow[r, "\epsilon"] & \pr_{1*}F\pr_2^*\\
					\pr_{1*}\Delta_*\Delta^*\pr_2^*F\arrow[r, "\simeq"]&\pr_{1*}\Delta_* F \Delta^*\pr_2^*\arrow[r, "\Nm_\Delta^{-1}"'] & \pr_{1*}\Delta_!F\Delta^*\pr_2^*\arrow[u, "\text{BC}_!"']\arrow[r, "\simeq"'] & \pr_{1*}\Delta_!\Delta^*F\pr_2^*\arrow[u, "\epsilon"']\arrow[ul,phantom, "(\ddag)"{description}]
				\end{tikzcd}
			\end{equation*}
			Here the unmarked squares commute by naturality, while the subdiagram $(\ddag)$ commutes by Lemma~\ref{lemma:BC-unit-counit}. The square $(\dagger)$ commutes before inverting the horizontal equivalences by \cite{CLL_Global}*{Lemma~4.4.8${}^\op$}; since also the vertical maps are equivalences by the semiadditivity assumption on $F$, we conclude that also $(\dagger)$ commutes. To see that also the final remaining subdiagram $(*)$ commutes, we first note that by another application of Lemma~\ref{lemma:BC-compose}${}^\op$ the right hand vertical composite $F\pr_{1*}\Delta_*\Delta^*\pr_2^*\to \pr_{1*}\Delta_*F\Delta^*\pr_2^*$ is induced by the Beck-Chevalley map $F\pr_{1*}\Delta_*\to \pr_{1*}\Delta_*F$ for the adjunction $\Delta^*\pr_{1}^*\dashv\pr_{1*}\Delta_*$. By Corollary~\ref{cor:BC-htpy-inv}${}^\op$ we can therefore rewrite the top right composite as $F\simeq \pr_{1*}\Delta_*F\simeq  \pr_{1*}\Delta_*F\Delta^*\pr_2^*$. As this further agees with the bottom left composite by naturality, this completes the proof that $(*)$ and hence the whole diagram commutes.

			The top row spells out the definition of $F(N)\colon F\to F\pr_{1*}\pr_2^*$; it therefore remains to show that the lower composite $F\to \pr_{1*}F\pr_2^*$ agrees with the composite $F\to\pr_{1*}\pr_2^*F\simeq\pr_{1*}F\pr_2^*$. For this we contemplate the diagram
			\begin{equation*}
				\begin{tikzcd}
					F\arrow[d,"\simeq"']\\
					\pr_{1*}\Delta_*\Delta^*\pr_2^*F\arrow[d, "\Nm_\Delta^{-1}"'] \arrow[r,"\simeq"] & \pr_ {1*}\Delta_*F\Delta^*\pr_2^*\arrow[d, "\Nm_\Delta^{-1}"]\\
					\pr_{1*}\Delta_!\Delta^*\pr_2^*F\arrow[r,"\simeq"]\arrow[d,"\epsilon"'] & \pr_{1*}\Delta_!F\Delta^*\pr_2^*\arrow[r,"\simeq"] &
					\pr_{1*}\Delta_!\Delta^*F\pr_{2}^*\arrow[d,"\epsilon"]\\
					\pr_{1*}\pr_{2}^*F\arrow[rr,"\simeq"'] && \pr_{1*}F\pr_2^*
				\end{tikzcd}
			\end{equation*}
			where all unlabelled equivalences come from the compatibility of $F$ with restrictions. By the coherence conditions for the latter, the composite of the middle row is then just induced by the equivalence $\pr_2^*F\simeq F\pr_2^*$; with this established, all squares simply commute by naturality. The top right path $F\to\pr_{1*}F\pr_2^*$ literally agrees with the bottom composite in the previous diagram; on the other hand, the left hand column spells out the definition of the map $N\colon F\to\pr_{1*}\pr_2^*F$. Thus, this diagram witnesses the desired identity, completing the proof of the proposition.
		\end{proof}
	\end{proposition}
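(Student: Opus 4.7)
The plan is to reduce both commutativity statements to formal coherence properties of Beck-Chevalley transformations (developed in Appendix~\ref{sec:BC}) together with the naturality of the norm $\Nm_\Delta$ under $P$-semiadditive functors, which is \cite{CLL_Global}*{Lemma~4.4.8${}^\op$}. For the first diagram, I would realise both the top-right sub-composite $Ff^*f_*\to F\pr_{1*}\pr_2^*\to\pr_{1*}F\pr_2^*$ and the lower-left sub-composite $f^*Ff_*\to f^*f_*F\to\pr_{1*}\pr_2^*F$ as single Beck-Chevalley transformations for pastings of two adjoint squares. Using the compatibility of Beck-Chevalley maps with horizontal composition (Lemma~\ref{lemma:BC-compose}${}^\op$), the first sub-composite is the total Beck-Chevalley map for stacking the naturality square of $F$ along $\pr_2^*$ to the right of the pullback square defining $\pr_1,\pr_2$; the second is the total Beck-Chevalley map for stacking the naturality square of $F$ along $f^*$ to the left of the same pullback. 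These two pastings differ only by the structural equivalences $Ff^*\simeq f^*F$ and $F\pr_2^*\simeq\pr_2^*F$ appearing at the source and target respectively, so Corollary~\ref{cor:BC-htpy-inv-vert}${}^\op$ identifies the two Beck-Chevalley maps, yielding commutativity of the first diagram.

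For the second diagram I would first expand $N$ as the three-step zig-zag
\[
\id \simeq \pr_{1*}\Delta_*\Delta^*\pr_1^* \xrightarrow{\Nm_\Delta^{-1}} \pr_{1*}\Delta_!\Delta^*\pr_1^* \xrightarrow{\epsilon} \pr_{1*}\pr_2^*,
\]
and assemble a grid whose top row applies $F$ after $N$ and then Beck-Chevalley to land in $\pr_{1*}F\pr_2^*$, whose bottom row applies $N$ after $F$ and then the structural equivalence $\pr_{1*}\pr_2^*F\simeq\pr_{1*}F\pr_2^*$, and whose middle row records intermediate states in which $F$ has been moved past $\pr_{1*}\Delta_*$ or $\pr_{1*}\Delta_!$ via a Beck-Chevalley transformation. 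The squares involving $\Nm_\Delta^{-1}$ then commute because $\Nm_\Delta$ itself is natural with respect to $P$-semiadditive functors by \cite{CLL_Global}*{Lemma~4.4.8${}^\op$}; passing to inverses is legitimate because the relevant vertical Beck-Chevalley maps $F\pr_{1*}\Delta_*\to\pr_{1*}\Delta_*F$ and $F\pr_{1*}\Delta_!\to\pr_{1*}\Delta_!F$ are equivalences by $P$-semiadditivity of $F$. The squares involving the counit $\epsilon$ of $\Delta_!\dashv\Delta^*$ commute by the compatibility of Beck-Chevalley maps with counits, namely Lemma~\ref{lemma:BC-unit-counit}, combined with naturality. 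Finally, the square comparing the two copies of the initial identification $\id\simeq\pr_{1*}\Delta_*\Delta^*\pr_1^*$ across $F$ commutes because both composites realise the Beck-Chevalley transformation of the unit of the composite adjunction $\Delta^*\pr_1^*\dashv\pr_{1*}\Delta_*$ against the naturality square for $F$, and Corollary~\ref{cor:BC-htpy-inv}${}^\op$ identifies these two presentations.

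The main obstacle I foresee is the bookkeeping around the composite adjunction $\Delta^*\pr_1^*\dashv\pr_{1*}\Delta_*$ and its interplay with $\Nm_\Delta$: one must carefully decompose each pasted Beck-Chevalley map into its $\Delta$- and $\pr_1$-components via Lemma~\ref{lemma:BC-compose}${}^\op$, and must ensure that the mate calculus is deployed in precisely the direction that makes the naturality of $\Nm_\Delta$ (stated for the map itself, not its inverse) usable only after one has established that the relevant vertical Beck-Chevalley maps are equivalences.
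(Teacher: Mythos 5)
Your proposal takes essentially the same route as the paper's proof: for the first diagram you identify both subcomposites as total Beck-Chevalley maps of pasted squares via Lemma~\ref{lemma:BC-compose}${}^\op$ and compare them with Corollary~\ref{cor:BC-htpy-inv-vert}${}^\op$, and for the second you build the same grid resting on the same three ingredients---naturality of $\Nm_\Delta$ under $P$-semiadditive functors (Lemma~4.4.8${}^\op$ of \cite{CLL_Global}) together with invertibility of the relevant vertical Beck-Chevalley maps, Lemma~\ref{lemma:BC-unit-counit} for the counit squares, and Lemma~\ref{lemma:BC-compose}${}^\op$ with Corollary~\ref{cor:BC-htpy-inv}${}^\op$ for the square at the unit of $\Delta^*\pr_1^*\dashv\pr_{1*}\Delta_*$. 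The only point the paper makes explicit that your sketch leaves implicit is the final coherence check that the lower composite of the grid really equals $N$ applied to $F$ followed by the structural equivalence $\pr_{1*}\pr_2^*F\simeq\pr_{1*}F\pr_2^*$ (using the coherence of $F$'s compatibility with restrictions), a routine verification that fits within your plan.
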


	\begin{proof}[Proof of Theorem~\ref{thm:adams-gen}]
		We will prove the first statement, the second one will then follow from this via Corollary~\ref{cor:BorelObjectsGenerated}, similarly to the proof of Corollary~\ref{cor:Adams-stable-compact}.

		For this, let $i$ again denote the monomorphism $E\mathcal F_f\to A$. The claim then translates to demanding that the composite
		\begin{equation}\label{eq:norm-as-unit}
			\Hom(f_*X,Y)\xrightarrow{i^*f^*}\Hom(i^*f^*f_*X,i^*f^*Y)\xrightarrow{\blank\circ\smash{\overline\Nm}^{\fr}_f}\Hom(i^*X,i^*f^*Y)
		\end{equation}
		be an equivalence for every $X\in\essim(p_!)$ and $Y\in\Cc(B)$; similarly, Lemma~\ref{lemma:Adams-T-complete} translates to saying that the analogously defined map is an equivalence in every $P$-semiadditive $T$-presentable $\Dd$.

		Passing to a larger universe, we may assume that $\Cc$ is small. We now employ Lemma~\ref{lemma:embed} to obtain a $T$-presentable $P$-semiadditive $\Dd$ together with a fully faithful $P$-semiadditive functor $\Cc\to\Dd$, and we consider the diagram
		\begin{equation*}\hskip-35.18pt\hfuzz=35.2pt
			\begin{tikzcd}[cramped]
				[f_*X,Y]\arrow[r, "i^*f^*"]\arrow[d, "F"'] &\arrow[d, "F"][i^*f^*f_*X,i^*f^*Y]\arrow[r, "\blank\circ i^*(\text{BC}_*)^{-1}"]&[2.5em] \arrow[d, "F"][i^*\pr_{1*}\pr_2^*X,i^*f^*Y] \arrow[r, "\blank\circ N"]& \arrow[d,"F"] [i^*X,i^*f^*Y]\\
				{[Ff_*X,FY]}\arrow[r, "i^*f^*"]& {[i^*Ff^*f_*X,i^*f^*FY]} \arrow[r, "(\blank\circ \text{BC}_*)^{-1}"]\arrow[dr,phantom,"(*)"{description}]  &
				{[i^*F\pr_{1*}\pr_2^*X,i^*f^*FY]} \arrow[dr,phantom,"(*)"{description}]\arrow[r, "\blank\circ N"] & {[i^*FX,i^*f^*FY]}\\
				{[f_*FX,FY]}\arrow[u, "\text{BC}_*", "\sim"']\arrow[r, "i^*f^*"'] & {[i^*f^*f_*FX, i^*f^*FY]}\arrow[u, "\text{BC}_*"]\arrow[r, "\blank\circ i^*(\text{BC}_*)^{-1}\,"'] & {[i^*\pr_{1*}\pr_2^*FX, i^*f^*FY]}\arrow[u, "\text{BC}_*"']\arrow[r, "\blank\circ N"'] & {[i^*FX,i^*f^*FY]}\arrow[u,equal]
			\end{tikzcd}
		\end{equation*}
		where we have written $[\,{,}\,]$ for $\Hom$ and have suppressed the naturality constraints of $F$ for simplicity. Here the top row spells out $(\ref{eq:norm-as-unit})$ while the bottom row spells out the analogously defined map for $\Dd$.

		The two rectangles marked $(*)$ commute by Proposition~\ref{prop:nm-lambda-preserved} while all other rectangles commute by naturality. By $P$-semiadditivity, $F(X)$ lies in the essential image of $p_!\colon\Dd(C)\to\Dd(A)$, so Lemma~\ref{lemma:Adams-T-complete} shows that the bottom composite $[f_*FX,FY]\to[i^*FX,i^*f^*FY]$ is an equivalence.

		On the other hand, Lemma~\ref{lemma:BC-inv} together with $P$-semiadditivity shows that the lower left vertical map $\BC_*\colon [f_*FX,FY]\to[Ff_*X,FY]$ is an equivalence. As $F$ is fully faithful, also all top vertical maps are equivalences, so the claim now follows by $2$-out-of-$3$.
	\end{proof}

	\section{Adams isomorphisms in equivariant mathematics}
    \label{sec:Adams_Iso_Equivariant}
	Our main interest of the abstract Adams isomorphism from \Cref{thm:adams-gen} is in the case where $T = \Glo$ and $P = \Orb$. In this section, we will spell out the resulting Adams isomorphism in this setting, and provide a range of examples.

	\begin{definition}[$N$-free $G$-objects]
		\label{def:NBorelObjects}
		Let $N$ be a normal subgroup of a finite group $G$. Specializing \Cref{ex:PFamilyFf} to the quotient map $f\colon G\to G/N$, we get $E\mathcal F_N\coloneqq E\mathcal F_f\in\ul\Spc_{\Orb}(G)$, which we can equivalently view as the $G$-space satisfying
		\[
		(\EF_N)^H = \begin{cases} 1 & \text{if }H\cap N=e \\ \emptyset & \text{otherwise}.
		\end{cases}
		\]
		For any equivariantly presentable global category $\Cc$, we then define the subcategory $\Cc(G)_{\Nfree} \subseteq \Cc(G)$ of \textit{$N$-free $G$-objects} as the full subcategory of $(f\colon G\to G/N)$-free objects in the sense of \Cref{def:FBorelObjects}, i.e.~the subcategory of those objects $X \in \Cc(G)$ for which the map $X \otimes_G \EF_N\to X$ is an equivalence.
	\end{definition}

	We then obtain the following specialization of the abstract Adams isomorphism:

	\begin{theorem}[Abstract Adams isomorphism for global categories]
		\label{thm:GlobalFormalAdamsIso}
		Let $\Cc\hskip0pt minus .5pt\colon\hskip0pt minus 1pt \Glo\catop\hskip0pt minus 1pt \to \Cat$ be an equivariantly presentable equivariantly semiadditive global category and let $N \trianglelefteqslant G$ be a normal subgroup of a finite group $G$. Assume that the $N$-fixed point functor $(-)^N\colon \Cc(G) \to \Cc(G/N)$ preserves (non-parametrized) colimits. Then there is a natural equivalence
		\[
		X/N \iso X^N
		\]
		in $\Cc(G/N)$ for every $N$-free $G$-object $X \in \Cc(G)_{\Nfree}$.
	\end{theorem}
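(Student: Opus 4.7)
The plan is to deduce the statement by direct specialization of the abstract Adams isomorphism \Cref{thm:adams-gen} to $T=\Glo$, $P=\Orb$, and the quotient homomorphism $q\colon G\to G/N$, viewed as a morphism in $\Glo$. Under this specialization, all data in the conclusion of \Cref{thm:adams-gen} translates into the equivariant language of the theorem: the restriction $q^*\colon \Cc(G/N)\to\Cc(G)$ is the inflation functor, its right adjoint $q_*$ is by definition the $N$-fixed point functor $(-)^N$, and by uniqueness of adjoints the partial left adjoint $q_!^{\fr}$ supplied by \Cref{lem:PartialLeftAdjoint} is the $N$-orbit functor $-/N$.

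Before applying the theorem, I would check that the $\Orb$-family $\Ff_q\subseteq\Orb_{/G}$ of \Cref{ex:PFamilyFf} agrees with the family $\Ff_N$ of \Cref{def:NBorelObjects}. For an injection $\phi\colon H\hookrightarrow G$, the composite $q\phi$ lies in $\Orb$ iff $q\phi$ is injective, iff $\phi^{-1}(N)=e$, iff $\phi(H)\cap N=e$. Thus under the identification $\Orb_{/G}\simeq\Orb_G$ the two families coincide, so $q$-free objects coincide with $N$-free objects, and the cocontinuity hypothesis on $(-)^N$ is exactly the hypothesis on $q_*$ in \Cref{thm:adams-gen}(2).

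The only remaining piece requiring genuine verification, and therefore the main thing to nail down, is that the diagonal $\Delta\colon G\to G\times_{G/N}G$ lies in $\ul{\mathbb F}^{\Orb}_{\Glo}$. The pullback is computed inside the $(2,1)$-category $\mathbb F_{\Glo}$ of finite groupoids, where the homotopy pullback of $BG\to B(G/N)\leftarrow BG$ is easily seen to be connected with automorphism group the honest fiber product
\[
    K=\{(g_1,g_2)\in G\times G\mid g_1N=g_2N\},
\]
and the diagonal $\Delta$ corresponds to the inclusion $G\hookrightarrow K$, $g\mapsto(g,g)$. This is an injective homomorphism, i.e., a single morphism of $\Orb$, and hence \emph{a fortiori} lies in $\ul{\mathbb F}^{\Orb}_{\Glo}$.

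With all identifications and hypotheses in hand, part~(2) of \Cref{thm:adams-gen} immediately yields the desired natural equivalence $X/N\iso X^N$ on $N$-free objects; no further work beyond the general theorem of Section~\ref{sec:adams-iso-gen} is required.
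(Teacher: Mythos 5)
Your proposal is correct and follows essentially the same route as the paper: specialize \Cref{thm:adams-gen} to the quotient morphism $q\colon G\to G/N$ and verify that the diagonal lies in $\Orb$. The only stylistic difference is that you compute the homotopy pullback $G\times_{G/N}G$ explicitly as the finite group $K=\{(g_1,g_2)\mid g_1N=g_2N\}$, whereas the paper observes more briefly that $\Delta$ admits a retraction (one of the projections) and is therefore injective.
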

	\begin{proof}
		This is an instance of \Cref{thm:adams-gen}, applied to the quotient morphism $f\colon G \to G/N$. The diagonal of $f$ is the map $G \to G \times_{G/N} G$, which is injective as it admits a retraction, and thus lies in $\Orb$.
	\end{proof}

	In the remainder of this section, we discuss a wide range of examples of this theorem.

	\begin{example}[Equivariant homotopy theory]
		\label{ex:EquivHtpyTheory}
		Let $\Cc=\ul\mySp$ be the global category of genuine equivariant spectra from Example~\ref{ex:equiv-spectra}. For any finite group $H$, the suspension spectra of transitive $H$-sets form a set of compact generators for $\mySp_H$, see \cite{hausmann-equivariant}*{Proposition~4.9(3)}. Given a quotient map $q\colon G \to G/N$, we immediately see that the inflation functor $q^*\colon \mySp_{G/N} \to \mySp_G$ sends these compact generators to compact objects, so that its right adjoint $(-)^N$ preserves colimits.

		Thus, we may apply the above theorem to $\ul\mySp$. The resulting equivalence $X/N \simeq X^N$ for $N$-free genuine $G$-spectra is precisely the classical Adams isomorphism.
	\end{example}

\begin{example}[Global equivariant homotopy theory] \label{ex:GlobalEquivHtpyTheory}
	Similarly, consider the global category $\ul\mySp^\text{gl}$ of global spectra (Example~\ref{ex:global-spectra}). Given any quotient map $q\colon G\to G/N$, the functor $(-)^N=q_*\colon\mySp^\text{gl}_G\to\mySp^\text{gl}_{G/N}$ is cocontinuous as a special case of Theorem~\ref{thm:universal-cocontinuous} in the appendix. Let us also give a direct argument for this:

	For each $H$, the objects $\phi_!\mathbb S$ for $\phi$ running through all homomorphisms into $H$ form a set of compact generators of $\mySp^\text{gl}_H$, see \cite{CLL_Global}*{Theorem~7.1.12}. Taking $H=G/N$, the Beck-Chevalley condition shows that $q^*\phi_!\mathbb S$ is equivalent to $(G\times_{G/N}\phi)_!\mathbb S$, hence in particular again compact; as before, we then conclude that the right adjoint $(-)^N$ preserves colimits.

	As a consequence of the theorem, we therefore get equivalences of $G/N$-global spectra $X/N\simeq X^N$ for any $N$-free $G$-global spectrum $X$, i.e.~those $X$ such that $\Phi^\phi X=0$ whenever $\textup{im}(\phi)\cap N\not=e$, see \Cref{ex:torsion-global}. In the special case $G=N$ an alternative approach to the Adams isomorphism for $G$-global spectra, based on the usual proof of the equivariant Adams isomorphism, appears in \cite{tigilauri}.
\end{example}

\begin{warn}
	As the global category $\ul\mySp^{\gl}$ is even globally presentable, the restriction $q^*\colon\Cc(G/N)\to\Cc(G)$ has an honest left adjoint ${-}/N$, and the norm construction provides a comparison map $X/N\to X^N$ for every $G$-global spectrum $X$, not just the $N$-free ones. Beware, however, that this map is typically far from being an equivalence. As a concrete example, let $G=N=C_2$ and $X=\mathbb S$ be the $C_2$-global sphere. Then $X/C_2\simeq\Sigma^\infty_+(*/C_2)=\Sigma^\infty_+ B_\text{gl}C_2$ is the suspension spectrum of the global classifying space of $C_2$, so its underlying non-equivariant spectrum is the suspension spectrum of the usual classifying space $BC_2$. On the other hand, the underlying spectrum of $X^{C_2}$ is given by the categorical $C_2$-fixed points of $X$, so the tom Dieck splitting identifies it with $\mathbb S\vee \Sigma^\infty_+BC_2$. Thus, already the zeroth non-equivariant homotopy groups of $X/C_2$ and $X^{C_2}$ differ.
\end{warn}

	\begin{example}\label{ex:gamma-Adams}
		We further recall the global categories $\ul\GammaS_*^\text{spc}$ and $\ul\GammaS_*^\text{gl,\,spc}$ of equivariant and global special $\Gamma$-spaces, respectively (Example~\ref{ex:gamma}). Given any quotient map $q\colon G\to G/N$, Theorem~\ref{thm:universal-cocontinuous} shows that the right adjoints $q_*=(-)^N$ are cocontinuous, so the theorem provides us with Adams isomorphisms for $N$-free $G$-equivariant and $G$-global special $\Gamma$-spaces. In light of \cite{ostermayr}*{Theorem~5.9} and \cite{g-global}*{Theorem~3.4.21}, these can be viewed as a `non-group completed' versions of the Adams isomorphisms discussed in the previous two examples, and in particular provide a further refinement of them in the connective case.
	\end{example}

	\begin{example}[Mackey functors]
		\label{ex:MackeyFunctors}
		Let $\Ee$ be a semiadditive category, and define a global category $\ul{\mathrm{Mack}}_{\Ee}$ by $\ul{\mathrm{Mack}}_{\Ee}(G) = \Fun^{\times}(\Span(\Fin_G),\Ee)$, the category of $G$-Mackey functors in $\Ee$. We will show that $\ul{\mathrm{Mack}}_{\Ee}$ admits abstract Adams isomorphisms.

		Note that for an arbitrary finite groupoid $A$, viewed as a presheaf on $\Glo$, one has
        \[
        \ul{\mathrm{Mack}}_{\Ee}(A) = \Fun^{\times}(\Span(\Fin_A),\Ee).
        \]
          One can show that for any fully faithful functor $\alpha\colon A\rightarrow B$ between finite groupoids, the induction-restriction adjunction $\alpha_! \colon \Fin_A\rightleftarrows \Fin_B \noloc \alpha^*$ satisfies the assumptions of \cite{BachmannHoyois2021Norms}*{Corollary C.21}, and so we obtain a pair of functors
		\[
		\alpha_!  \colon \Span(\Fin_A)\rightleftarrows \Span(\Fin_B) \noloc \alpha^*
		\]
		such that $\alpha^*$ is both left and right adjoint to $\alpha_!$. The (co)unit of the adjunction is induced by the (co)unit of the original adjunction, and so the fact that these functors satisfy the appropriate base change can be deduced from the case of $\ul{\mathbb{F}}_{\Glo}^{\Orb}$. Applying \cite{CLL_Global}*{Remark 4.3.12}, we conclude that $\Span(\ul{\mathbb{F}}_{\Glo}^{\Orb})$ is equivariantly semiadditive, and by 2-functoriality of $\Fun^{\times}(-,\Ee)$ we deduce the same for $\ul{\mathrm{Mack}}_{\Ee}$. If $\Ee$ is furthermore presentable, then $\mathrm{Mack}_{\Ee}$ is fiberwise presentable by \cite{HTT}*{Proposition 5.5.8.10(1)} and so also equivariantly presentable.

		It remains to show that the fixed point functors $(-)^N\colon \mathrm{Mack}_{\Ee}(G) \to \mathrm{Mack}_{\Ee}(G/N)$ preserve colimits. This is clear for sifted colimits, as these are computed pointwise in $\mathrm{Mack}_{\Ee}(G)$ \cite{HTT}*{Proposition 5.5.8.10(4)} and $(-)^N$ is given by precomposition. Because finite coproducts in $\mathrm{Mack}_{\Ee}(G)$ are also products, it follows that $(-)^N$ commutes with all colimits. We conclude that $\ul{\mathrm{Mack}}_{\Ee}$ admits abstract Adams isomorphisms.
	\end{example}

	\begin{example}[Representation theory]
		\label{ex:RepTheory}
		Let $\Ee$ be a presentable semiadditive category. Then one may consider the global category $\Cc(G)\coloneqq\Fun(BG,\Ee)$, which is equivariantly presentable and equivariantly semiadditive by Proposition 4.3.3 and Proposition 4.4.9 of \cite{hopkins2013ambidexterity}, respectively. For example:
		\begin{itemize}
			\item For $\Ee = \mathrm{Vect}_k$ the category of $k$-vector spaces over a field $k$, $\Cc(G)$ is the category $\mathrm{Rep}_k(G)$ of $G$-representations over $k$;
			\item For $\Ee = \Dd(R)$ the derived category of a ring $R$, $\Cc(G)$ is the derived category $\Dd(R[G])$ of the group ring $R[G]$.
		\end{itemize}
		In this case, we do not in general have an abstract Adams isomorphism as the fixed point functors do not need to preserve colimits: as a concrete example, the functor $(-)^{C_2}\colon \Fun(BC_2,\mathrm{Vect}_{\mathbb{F}_2}) \to \mathrm{Vect}_{\mathbb{F}_2}$ does not preserve the exact sequence $\mathbb{F}_2[C_2] \to \mathbb{F}_2 \to 0$. The following lemma gives a complete characterization for when $\Cc$ has abstract Adams isomorphisms, in terms of the notion of \textit{1-semiadditivity} from \cite{hopkins2013ambidexterity}*{Definition~4.4.2}:
	\end{example}

	\begin{lemma}
		Let $\Ee$ be a semiadditive category. The global category $\Cc(G) \coloneqq \Fun(BG,\Ee)$ admits abstract Adams isomorphisms if and only if $\Ee$ is 1-semiadditive.
	\end{lemma}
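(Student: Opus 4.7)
The plan is to deduce both directions from the key observation that for the Borel global category $\Cc(G) = \Fun(BG, \Ee)$, the fixed-point functor $q_* = (-)^N$ along a quotient $q\colon G \to G/N$ is computed pointwise by $\lim_{BN}\colon \Fun(BN, \Ee) \to \Ee$ (since $Bq\colon BG \to B(G/N)$ is a fibration with fibre $BN$, and colimits in both functor categories are pointwise). In particular, cocontinuity of $(-)^N$ for every normal subgroup reduces to cocontinuity of $\lim_{BN}\colon \Fun(BN,\Ee)\to\Ee$ for every finite group $N$.

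For the \emph{if} direction, if $\Ee$ is $1$-semiadditive then, by Hopkins--Lurie's definition, the norm $\colim_{BN} \to \lim_{BN}$ is an equivalence of endofunctors of $\Fun(BN, \Ee)$ for every finite $N$, so in particular $\lim_{BN}$ is cocontinuous (being equivalent to a left adjoint). Hence $(-)^N$ preserves colimits for every $N \trianglelefteqslant G$ and \Cref{thm:GlobalFormalAdamsIso} applies. For the converse, assume that $(-)^N$ preserves colimits for every $N \trianglelefteqslant G$; fixing a finite group $N$ and specialising to $G = N$ yields cocontinuity of $\lim_{BN}$. The norm $\Nm_{BN}\colon \colim_{BN} \to \lim_{BN}$ is therefore a natural transformation between two cocontinuous functors, so it suffices to check it is an equivalence on a colimit-generating family. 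Since $\mathrm{ind}_e^N\colon \Ee \to \Fun(BN, \Ee)$ is left adjoint to the conservative restriction functor, the standard monadic resolution exhibits every object of $\Fun(BN, \Ee)$ as a colimit of free objects $\mathrm{ind}_e^N Y$.

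On such a free object one computes $\colim_{BN}(\mathrm{ind}_e^N Y) \simeq Y$ as a composition of left adjoints and $\lim_{BN}(\mathrm{coind}_e^N Y) \simeq Y$ as a composition of right adjoints; combined with the equivalence $\mathrm{ind}_e^N Y \simeq \mathrm{coind}_e^N Y$ coming from $0$-semiadditivity of $\Ee$, the norm composition formula applied to the factorisation $e \hookrightarrow BN \to \ast$ forces $\Nm_{BN}$ on $\mathrm{ind}_e^N Y$ to agree with the identity $\id_Y$, hence to be an equivalence. This proves that $\Ee$ is ambidextrous at every $BN$ and therefore $1$-semiadditive. The main subtlety I anticipate is correctly matching the norm $\Nm_f^{\fr}$ constructed in \Cref{sec:adams-iso-gen} with the Hopkins--Lurie norm used in the definition of $1$-semiadditivity, which should follow from the remark after the construction of $\Nm_f^{\fr}$ since $\Fun(B{-}, \Ee)$ is $\Glo$-presentable whenever $\Ee$ is presentable.
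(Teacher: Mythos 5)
Your overall plan is sound: identify $(-)^N$ pointwise with $\lim_{BN}$, observe that cocontinuity of the latter is (by Hopkins--Lurie) equivalent to $\Ee$ being $1$-semiadditive, and invoke Theorem~\ref{thm:GlobalFormalAdamsIso}. The route you take differs from the paper's in one respect: where the paper cites \cite{hopkins2013ambidexterity}*{Proposition~4.3.9} for the equivalence between cocontinuity of the right adjoints and $1$-semiadditivity, you re-derive the hard half directly by reducing to the generators $\operatorname{ind}_e^N Y$ via the norm-composition identity along $e\to BN\to *$. That is a legitimate and somewhat more self-contained variant — essentially the argument of Lemma~\ref{lemma:Adams-T-complete} transplanted into the Hopkins--Lurie setting — so there is nothing wrong with it as a proof strategy.

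There is, however, a real gap. Your ``only if'' direction begins by \emph{assuming} that $(-)^N\colon\Cc(G)\to\Cc(G/N)$ preserves colimits for every normal $N\trianglelefteqslant G$, but that is not the literal hypothesis. ``Admits abstract Adams isomorphisms'' means that the norm map $\Nm_f^{\fr}\colon f_!^{\fr}\to f_*$ is an equivalence on the $N$-free objects, i.e.\ the \emph{conclusion} of Theorem~\ref{thm:GlobalFormalAdamsIso}, not its cocontinuity \emph{hypothesis}. To pass from the former to the latter you need the observation — supplied in the paper via \cite{hopkins2013ambidexterity}*{Lemma~4.3.8} — that $\Fun(BG,\Ee)$ is generated under colimits by objects induced from the trivial group, and hence that \emph{every} object of $\Cc(G)$ is $N$-free. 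Only then is the partial left adjoint $f_!^{\fr}$ a genuine left adjoint $f_!$ defined on all of $\Cc(G)$, so that an Adams isomorphism $f_!\simeq f_*$ forces $f_*=(-)^N$ to be cocontinuous. Without this step you are proving a weaker statement (namely that cocontinuity of the fixed points is equivalent to $1$-semiadditivity) rather than the lemma as stated. As a smaller remark, the compatibility of $\Nm_f^{\fr}$ with the Hopkins--Lurie norm that you flag at the end does require $\Glo$-presentability and hence presentability of $\Ee$; this is implicitly carried over from the ambient Example~\ref{ex:RepTheory}, but is worth making explicit since the lemma's stated hypothesis only mentions semiadditivity.
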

	\begin{proof}
		Since the category $\Fun(BG,\Ee)$ is generated under colimits by objects induced from the trivial group, \cite{hopkins2013ambidexterity}*{Lemma~4.3.8}, it follows that every object $X \in \Cc(G)$ is $N$-free for $N \leqslant G$. In particular, $\Cc$ admits abstract Adams isomorphisms if and only if the $N$-fixed point functors $(-)^N = (-)^{hN}\colon \Cc(G) \to \Cc(G/N)$ preserve colimits. Since any morphism of groups factors as a quotient map followed by an injection, this is equivalent to the condition that $f_*$ preserves colimits for any morphism $f\colon H \to G$ of finite groups. But by \cite{hopkins2013ambidexterity}*{Proposition~4.3.9}, this is equivalent to the condition that $\Ee$ is 1-semiadditive.
	\end{proof}

	\begin{example}[Equivariant Kasparov categories]
		\label{ex:Kasparov}
		For a finite group $G$, we may consider the \textit{$G$-equivariant Kasparov category} $\KK^G = \Ind(\KK^G_{\sep})$ from \cite{BEL2023Kasparov}. We claim that these categories assemble into an equivariantly presentable and equivariantly semiadditive global category $\ul{\KK}$, which we will call the \textit{global Kasparov category}. We will further see that $\ul{\KK}$ satisfies the conditions of \Cref{thm:GlobalFormalAdamsIso}, thus providing formal Adams isomorphisms for equivariant Kasparov categories.

        We construct $\ul{\KK}$ using the following four steps:
        \begin{enumerate}[(1)]
            \item We start by considering the global category $\ul{C^*\mathrm{Alg}}^{\mathrm{nu}}_{\mathrm{sep}} \colon \Glo\catop \to \Cat$ given by the assignment $G \mapsto \Fun(BG,C^*\mathrm{Alg}^{\mathrm{nu}}_{\mathrm{sep}})$, where $C^*\mathrm{Alg}^{\mathrm{nu}}_{\mathrm{sep}}$ denotes the 1-category of separated non-unital $C^*$-algebras.
            \item For every finite group $G$, there is a class of morphisms in $\Fun(BG,C^*\mathrm{Alg}^{\mathrm{nu}}_{\mathrm{sep}})$ called the \textit{$\mathrm{kk}_0^G$-equivalences} by \emph{op.~cit.}. By \cite[Theorem~1.22]{BEL2023Kasparov}, the restriction functor
            \[
            \mathrm{Res}^G_H \colon \Fun(BG,C^*\mathrm{Alg}^{\mathrm{nu}}_{\mathrm{sep}}) \to \Fun(BH,C^*\mathrm{Alg}^{\mathrm{nu}}_{\mathrm{sep}})
            \]
            for a group homomorphism $H \to G$ sends $\mathrm{kk}_0^G$-equivalences to $\mathrm{kk}_0^H$-equivalences. In particular, $\ul{C^*\mathrm{Alg}}^{\mathrm{nu}}_{\mathrm{sep}}$ refines to a functor $\ul{C^*\mathrm{Alg}}^{\mathrm{nu}}_{\mathrm{sep}}\colon \Glo\catop \to \RelCat$, where $\RelCat$ denotes the category of \textit{relative categories}.
            \item Composing $\ul{C^*\mathrm{Alg}}^{\mathrm{nu}}_{\mathrm{sep}}$ with the Dwyer-Kan localization functor $L\colon \RelCat \to \Cat$ gives a global category $\ul{\KK}_{\sep}\colon \Glo\catop \to \Cat$. By definition its value at $G$ is the category $\KK^G_{\sep}$ of \cite[Definition~1.2]{BEL2023Kasparov}.
            \item Finally, we define the global category $\ul{\KK}$ as the composite
            \[
                \ul{\KK} \colon \Glo\catop \xrightarrow{\ul{\KK}_{\sep}} \Cat \xrightarrow{\Ind} \PrL.
            \]
            It is given on objects by $G \mapsto \KK^G$ and on morphisms by sending a group homomorphism $H \to G$ to the restriction functor $\mathrm{Res}^G_H\colon \KK^G \to \KK^H$ constructed in \cite[Theorem~1.22]{BEL2023Kasparov}.
        \end{enumerate}

	We will now show that $\ul{\KK}$ is both equivariantly presentable and equivariantly semiadditive. Fiberwise semiadditivity is clear, as $\ul{\KK}$ is even fiberwise stable by Theorem 1.3 of \emph{op.~cit}. Given an inclusion $H \leqslant G$ of finite groups, the adjunction $\mathrm{Res}^G_H \dashv \Ind^G_H$ at the $C^*$-algebra level\footnote{We follow \cite{BEL2023Kasparov} in denoting the right adjoint of restriction by $\Ind^G_H$ in this case. We emphasize that, at the level of $C^*$-algebras, the functor $\Ind^G_H$ is only a \textit{right} adjoint to restriction, not a left adjoint.} descends by Theorem 1.22 of \emph{op.~cit.} to the level of equivariant Kasparov categories. Since the Beck-Chevalley conditions are satisfied at the level of $C^*$-algebras, they descend to the level of equivariant Kasparov categories, showing that the global Kasparov category is equivariantly complete. Furthermore, Theorem 1.23(1) of \emph{op.~cit.} shows that the functor $\Ind^G_H\colon \KK^H \to \KK^G$ is also a \textit{left} adjoint to $\mathrm{Res}^G_H$, as exhibited by an explicit unit transformation $\id \to \mathrm{Res}^G_H\Ind^G_H$ constructed at the level of $C^*$-algebras. One can deduce from \cite[Lemma~4.3.11]{CLL_Global} that this map agrees with the dual adjoint norm map $\Nmadjdual\colon \id \to \mathrm{Res}^G_H \Ind^G_H$ constructed in \cite[Construction~4.3.8]{CLL_Global}, and using fiberwise semiadditivity of $\ul{\KK}$ it then follows from \cite[Lemma~4.5.4]{CLL_Global} that $\ul{\KK}$ is even equivariantly semiadditive. Finally, equivariant presentability of $\ul{\KK}$ is a consequence of fiberwise presentability and equivariant semiadditivity.

    Since the inflation functor $\KK^{G/N} \to \KK^G$, given by restriction along the quotient map $G \to G/N$, preserves compact objects by construction of $\ul{\KK}$, its right adjoint $(-)^N\colon \KK^G \to \KK^{G/N}$ preserves colimits; it thus follows from \Cref{thm:GlobalFormalAdamsIso} that $\ul{\KK}$ admits abstract Adams isomorphisms.
	\end{example}

	\begin{remark}
   	In the previous example, the abstract Adams isomorphism is not optimal: it is proven in \cite[Theorem~1.23]{BEL2023Kasparov} that the left and right adjoint to the inflation functor $\KK \to \KK^G$ agree on \textit{all} objects in $\KK^G$, not just the $G$-free ones, and this is expected to hold more generally for the inflation functor $\KK^{G/N} \to \KK^G$ for $N\trianglelefteqslant G$.
	\end{remark}

        \appendix

        \section{Cocontinuity for the universal examples} \label{sec:cocont}

        Consider an orbital category $T$ and an atomic orbital subcategory $P \subseteq T$. In \cite{CLL_Clefts}, the authors constructed the universal example of a $P$-semiadditive (resp.\ $P$-stable) $P$-presentable $T$-category, denoted by $\ul\CMon^P_{P\triangleright T}$ (resp.\ $\ul\Sp^P_{P\triangleright T}$). Similarly, \cite{CLL_Global} constructs the universal examples $\ul\CMon^P_T$ and $\ul\Sp^P_{T}$ of $T$-presentable $P$-semiadditive resp.\ $P$-stable $T$-categories.

		The goal of this appendix is to show that the cocontinuity condition of \Cref{thm:adams-gen} is always satisfied for these universal $T$-categories:

	\begin{theorem}\label{thm:universal-cocontinuous}
		Let $f\colon A\to B$ be any map in $T$. Then each of the functors
		\begin{align*}
			f_*\colon\ul\CMon^P_T(A)&\to\ul\CMon^P_T(B)\\
			f_*\colon\ul\CMon^P_{P\triangleright T}(A)&\to\ul\CMon^P_{P\triangleright T}(B)\\
			f_*\colon\ul\Sp^P_T(A)&\to\ul\Sp^P_T(B)\\
			f_*\colon\ul\Sp^P_{P\triangleright T}(A)&\to\ul\Sp^P_{P\triangleright T}(B)
		\end{align*}
		is cocontinuous.
	\end{theorem}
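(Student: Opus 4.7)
The plan is to show that the left adjoint $f^*$ preserves compact objects in each of the four cases; once this is established, cocontinuity of $f_*$ will follow formally, using compact generation together with the fact that $f_*$ automatically preserves the finite colimits available in a fiberwise semiadditive or stable setting (as a right adjoint, it preserves finite products, and these coincide with finite coproducts in the semiadditive target, with stability then producing all finite colimits in the stable case).

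As a first reduction, I would pass from the stable cases to the semiadditive ones. The stable universal examples $\ul\Sp^P_T$ and $\ul\Sp^P_{P\triangleright T}$ are obtained from $\ul\CMon^P_T$ and $\ul\CMon^P_{P\triangleright T}$ by a parametrized stabilization whose unit $\Sigma^{\infty}$ takes compact generators to compact generators and commutes with the restriction functors $f^*$; granting the semiadditive cases, the stable statements then follow.

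For the two remaining semiadditive cases, the next step is to pin down an explicit set of compact generators at each level $A$. By the universal properties characterizing these $T$-categories, the generators at $A$ should be the free objects $p_!\,\unit_C$, where $(p\colon C\to A)$ ranges over $T_{/A}$ in the $T$-presentable case $\ul\CMon^P_T$ and over $P_{/A}$ in the $P$-presentable case $\ul\CMon^P_{P\triangleright T}$. It will then suffice to check that $f^*$ sends each such generator to a compact object. In the $T$-presentable case this is immediate from the Beck--Chevalley isomorphism $f^*p_!\simeq p'_!(f')^*$ for the pullback of $p$ along $f$, which is available because both maps lie in $T$. In the $P$-presentable case, where $f\in T$ but $p\in P$, atomic orbitality guarantees that the pullback of $p$ along $f$ decomposes as a finite disjoint union of maps in $P$, so one expects $f^*p_!\,\unit_C\simeq \bigsqcup_i (p_i)_!\,\unit_{C_i}$: a finite coproduct of generators, hence compact.

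The main obstacle is the Beck--Chevalley identity in the $P$-presentable case: mixed base change between restriction along a $T$-map and induction along a $P$-map is not part of the abstract $P$-presentability package (cf.\ Remark~\ref{rk:basechange}), so it must be established directly for the universal examples. I would attack this by unwinding the construction of $\ul\CMon^P_{P\triangleright T}$ from \cite{CLL_Clefts} as a suitable full subcategory of a functor category on a $P$-span category, where both $f^*$ and $p_!$ admit explicit pointwise descriptions and the compatibility can be read off by inspection; an analogous argument using the $T$-presentable model from \cite{CLL_Global} handles $\ul\CMon^P_T$ (although there the identity also holds abstractly by $T$-presentability).
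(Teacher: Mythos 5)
Your strategy has a genuine gap in the two semiadditive (non-stable) cases. If $f^*$ preserves compact objects between compactly generated categories, the formal consequence is that $f_*$ preserves \emph{filtered} colimits. Combining this with preservation of finite coproducts does \emph{not} yield preservation of all small colimits in a merely semiadditive setting: geometric realizations (and, more generally, sifted colimits) are not generated by finite coproducts together with filtered colimits. This combination suffices only in the stable case, where finite colimits plus filtered colimits do generate everything. So your reduction of the stable cases to the semiadditive ones is fine in principle, but the base case, $\ul\CMon^P_T$ and $\ul\CMon^P_{P\triangleright T}$, is exactly where the argument as written does not close.

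The repair is to replace ``compact'' by ``compact projective'' (tiny) throughout: the generators $p_!\unit_C$ you propose are in fact compact projective, and if $f^*$ sends a set of compact projective generators to compact projectives then $f_*$ preserves sifted colimits, which together with finite coproducts (from semiadditivity) does give full cocontinuity. This is essentially what the paper does, though in a different guise: Proposition~\ref{prop:T-spc-sifted} proves directly that $f_*$ preserves sifted colimits on $\ul\Fun_T(I,\ul\Spc_T)$, by identifying $\ul\Spc_T(A)\simeq\Fun^\times(\ul{\mathbb F}_T(A)^\op,\Spc)$ and observing that $f_*$ becomes restriction along a finite-coproduct-preserving functor (so sifted colimits are computed pointwise); Lemma~\ref{lemma:cmon-sifted-closed} then checks that $\ul\CMon^P_T(A)$ is closed under sifted colimits in the ambient functor category. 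In effect the paper works with the projective generation implicitly, at the level of the Lawvere-style model, rather than via an abstract compactness argument.

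Two further points of contrast worth noting. First, for $\ul\CMon^P_{P\triangleright T}$ you propose to establish the mixed $T$/$P$ base change $f^*p_!\simeq\bigsqcup_i (p_i)_!$ by unwinding the explicit $P$-span model; this is doable but is additional work. The paper avoids it entirely by embedding $\ul\CMon^P_{P\triangleright T}\hookrightarrow\ul\CMon^P_T$ via the fully faithful left adjoint $\iota_!$ (which comes with the identity $f_*\iota^*\simeq\iota^*f_*$) and transporting cocontinuity along that embedding. Second, for the stable cases the paper does not use $\Sigma^\infty$ and compact generators at all: it observes that $\ul\Sp^P_T(A)=\mySp\otimes\ul\CMon^P_T(A)$ in $\PrL$ and uses $2$-functoriality of $\mySp\otimes-$ to transport the internal adjunction $f^*\dashv f_*$, so that $f_*=\mySp\otimes f_*$ is automatically cocontinuous. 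Your $\Sigma^\infty$-route requires knowing that stabilization preserves compact projective generators and commutes with restrictions, which is true but again needs to be justified.
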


	As recalled in Section~\ref{sec:recollections}, for $\Orb\subseteq\Glo$ these universal examples can be explicitly described via global or equivariant $\Gamma$-spaces and global or equivariant spectra, respectively, and this provides the cocontinuity used in Examples~\ref{ex:GlobalEquivHtpyTheory} and~\ref{ex:gamma-Adams}.

	For the proof of the theorem, we recall that \cite{CLL_Global}*{Definition~4.8.1} constructs $\ul\CMon^P_T$ as a certain subcategory of the internal hom $\ul\Fun_T(\ul{\mathbb F}^P_{T,*},\ul\Spc_T)$, where $\ul{\mathbb F}^P_{T,*}$ denotes the $T$-category of pointed objects in $\ul{\mathbb F}^P_T$. We therefore begin with a statement about general $T$-categories of copresheaves:

	\begin{proposition}\label{prop:T-spc-sifted}
		Let $I$ be any $T$-category and let $f\colon A\to B$ lie in $\ul{\mathbb F}_T$. Then $f_*\colon\ul\Fun_T(I,\ul\Spc_T)(A)\to\ul\Fun_T(I,\ul\Spc_T)(B)$ preserves sifted colimits.
		\begin{proof}
			We will prove the proposition by treating successively more general cases.

			\smallskip
			\textit{Step 1.} We first treat the special case that $I$ is terminal and the presheaves $A$ and $B$ are representable, so that $f$ is a map in $T$. Identifying $\ul\Spc_T(A)=\PSh(T)_{/A}\simeq\PSh(T_{/A})$ with $\Fun^{\times}(\ul{\mathbb F}_{T}(A)^\op,\Spc)$ and $\ul\Spc_T(B)\simeq\Fun^\times(\ul{\mathbb F}_T(B)^\op,\Spc)$, the structure map $f^*\colon\ul\Spc_T(B)\to\ul\Spc_T(A)$ is given by restriction along the opposite of $f_!\colon\ul{\mathbb F}_T(A)\to\ul{\mathbb F}_T(B), (X\to A)\mapsto (X\to A\to B)$. However, $f_!$ has a right adjoint $f^*$ (given by pulling back along $f$) by orbitality of $T$, and this preserves finite coproducts as the corresponding map of presheaf categories is even a left adjoint. Thus, under the chosen identifications $f_*\colon\ul\Spc_T(A)\to\ul\Spc_T(B)$ is simply given by restricting along the opposite of $f^*\colon \ul{\mathbb F}_T(B)\to\ul{\mathbb F}_T(A)$.

			We now observe that $\Fun^{\times}(\ul{\mathbb F}_{T}(A)^\op,\Spc)\subseteq\PSh(\ul{\mathbb F}_T(A))$ is closed under sifted colimits as sifted colimits in spaces commute with finite products, and similarly for $\Fun^{\times}(\ul{\mathbb F}_{T}(B)^\op,\Spc)$. Thus, sifted colimits on both sides can be computed pointwise and the claim follows immediately.

			\smallskip
			\textit{Step 2.} Next, we consider the case that $I$ is terminal and $B$ is representable, so that $f$ decomposes as a finite coproduct $(f_i)\colon\coprod_{i=1}^nA_i\to B$ with $A_i$ representable. Then $f_*$ is simply given as the product of the individual $f_{i*}$, so the claim follows from the previous step, using again that sifted colimits commute with products.

			\smallskip
			\textit{Step 3.} Now we can treat the case that $I$ is terminal and $f\colon A\to B$ is a general map in $\ul{\mathbb F}_T$. Writing $B$ as a colimit of representables, we see that the restrictions $t^*\colon\ul\Spc_T(B)\to\ul\Spc_T(C)$ for all maps $t\colon C\to B$ with $C\in T$ are jointly conservative. As they are moreover left adjoints, it will be enough to show that $t^*f_*$ preserves sifted colimits for each such $t$. For this we consider the pullback diagram
			\begin{equation*}
				\begin{tikzcd}
					D\arrow[d, "g"']\arrow[r, "u"]\arrow[dr,phantom,"\lrcorner"{very near start}] & A\arrow[d, "f"]\\
					C\arrow[r, "t"'] & B\rlap.
				\end{tikzcd}
			\end{equation*}
			By the Beck-Chevalley condition for the $T$-complete category $\ul\Spc_T$, we get $t^*f_*\simeq g_*u^*$. On the other hand, $g$ is a map in $\ul{\mathbb F}_T$, so the previous step shows that $g_*$ preserves sifted colimits. As $u^*$ is a left adjoint, the claim follows.

			\smallskip
			\textit{Step 4.} Now assume $I$ is a $T$-presheaf. Then $f^*\colon\ul\Fun_T(I,\Cc)(B)\to\ul\Fun_T(I,\Cc)(A)$ agrees up to the equivalences from \cite{CLL_Global}*{Corollary~2.2.9} with the restriction $(f\times I)^*\colon\Cc(B\times I)\to\Cc(A\times I)$. As $f\times I$ is a pullback of $f$, it again lies in $\ul{\mathbb F}_T$, so $(f\times I)_*$ preserves sifted colimits by the previous step.

			\smallskip
			\textit{Step 5.} Now we treat the general case. For each $T$-functor $t\colon J\to I$, the functor $t^*\colon\ul\Fun_T(I,\Cc)\to\ul\Fun_T(J,\Cc)$ preserves both $T$-limits and colimits by \cite{martiniwolf2021limits}*{Proposition~4.3.1}. In particular, the previous step shows that \[t^*f_*\simeq f_*t^*\colon\ul\Fun_T(I,\Cc)(A)\to\ul\Fun_T(J,\Cc)(B)\] preserves sifted colimits whenever $J$ is a $T$-presheaf. As the functors $t^*$ for $t$ running through all maps from $T$-presheaves to $I$ are jointly conservative by \cite{martini2021yoneda}*{Corollary~4.7.17}, the claim follows.
		\end{proof}
	\end{proposition}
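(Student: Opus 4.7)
The plan is to proceed via a cascade of reductions, starting from a very concrete base case and progressively widening both the morphism $f$ and the indexing $T$-category $I$. The base case—when $I$ is terminal and $f$ is a morphism of \emph{representables}—carries the essential homotopical content: sifted colimits commute with finite products in $\Spc$, and $\ul\Spc_T(X)$ sits as the finite-product-preserving presheaves on $\ul{\mathbb F}_T(X)^\op$.

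More precisely, in the base case I would identify $\ul\Spc_T(A)\simeq\Fun^\times(\ul{\mathbb F}_T(A)^\op,\Spc)$ and likewise for $B$, and observe that the structure map $f^*$ corresponds to restriction along the opposite of $f_!\colon\ul{\mathbb F}_T(A)\to\ul{\mathbb F}_T(B),\,(X\to A)\mapsto(X\to A\to B)$. Orbitality of $T$ provides a right adjoint $f^*\colon \ul{\mathbb F}_T(B)\to\ul{\mathbb F}_T(A)$ (pullback along $f$) which preserves finite coproducts, because the induced functor on presheaf categories is itself a left adjoint. Dualizing, $f_*$ becomes restriction along this finite-coproduct-preserving functor; since finite-product-preserving presheaves are closed under sifted colimits and such colimits are computed pointwise there, $f_*$ preserves them.

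The next two steps extend this to a general $f$ in $\ul{\mathbb F}_T$ while keeping $I$ terminal. First, when $B$ is representable, $f$ decomposes as $(f_i)_i\colon\bigsqcup_i A_i\to B$ with $A_i\in T$, and $f_*$ is the product of the $f_{i*}$; sifted colimits commute with finite products, so this reduces to the base case. Then, for $B$ an arbitrary object of $\ul{\mathbb F}_T$, I would write $B$ as a colimit of representables and use that the restrictions $t^*\colon\ul\Spc_T(B)\to\ul\Spc_T(C)$ along maps $t\colon C\to B$ with $C\in T$ are jointly conservative left adjoints. It therefore suffices to show that $t^*f_*$ preserves sifted colimits for each such $t$; by Beck-Chevalley in the $T$-complete category $\ul\Spc_T$, this identifies with $g_*u^*$ for the pullback of $f$ along $t$, and $g$ again lies in $\ul{\mathbb F}_T$ over a representable, so the previous step applies.

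Finally, to handle general $I$, I would first treat the case that $I$ is a $T$-presheaf, where the cotensor identification $\ul\Fun_T(I,\ul\Spc_T)(X)\simeq\ul\Spc_T(X\times I)$ turns $f_*$ into pushforward along $f\times I$; being a pullback of $f$, this map still lies in $\ul{\mathbb F}_T$, and the previous step applies. For an arbitrary $T$-category $I$, I would invoke that restrictions $t^*\colon\ul\Fun_T(I,\ul\Spc_T)\to\ul\Fun_T(J,\ul\Spc_T)$ along maps $t\colon J\to I$ from $T$-presheaves form a jointly conservative family that preserves both $T$-limits and $T$-colimits, so sifted-colimit preservation transports back from the presheaf case. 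The main obstacle I expect is really the base case: squeezing out of orbitality a concrete description of $f_*$ as restriction along a finite-coproduct-preserving functor; once that is secured, every remaining step is essentially formal bookkeeping with conservativity and Beck-Chevalley.
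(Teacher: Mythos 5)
Your proposal is correct and follows essentially the same five-step reduction as the paper: the same base case identifying $f_*$ as restriction along the finite-coproduct-preserving pullback functor on $\ul{\mathbb F}_T$, the same coproduct decomposition and Beck-Chevalley/conservativity arguments for general $f$ in $\ul{\mathbb F}_T$, and the same passage from terminal $I$ to presheaves and then to arbitrary $T$-categories via jointly conservative restrictions.
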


	In order to apply this to $\ul\CMon^P_T$ we will use:

	\begin{lemma}\label{lemma:cmon-sifted-closed}
		Let $A\in\PSh(T)$. Then the full subcategory \[\ul\CMon^P_T(A)\subseteq\ul\Fun_T(\ul{\mathbb F}^P_{T,*},\ul\Spc_T)(A)\] is closed under sifted colimits.
		\begin{proof}
			Decomposing $A$ into representables and using that restriction functors are cocontinuous, we may assume that $A$ itself is representable.

			We write $\smallint\ul{\mathbb F}^P_{T,*}\times\ul A$ for the cocartesian unstraightening of the functor $T^\op\to\Cat,\allowbreak B\mapsto\ul{\mathbb F}^P_{T,*}(B)\times\Hom(B, A)$, and we will as usual denote objects in this unstraightening by triples $(B,X_+,f)$ with $B\in T^\op$, $X_+\in\ul{\mathbb F}^P_{T,*}(B)$, and $f\colon B\to A$. Then \cite{CLL_Global}*{Remark~2.2.14} provides an equivalence
			\begin{equation*}
				\ul\Fun_T(\ul{\mathbb F}^P_{T,*},\ul\Spc_T)(A)\simeq\Fun(\smallint \ul{\mathbb F}^P_{T,*}\times\ul A,\Spc)
			\end{equation*}
			and Remark 4.9.9 of \emph{op.\ cit.}\ shows that $\ul\CMon^P_T(A)$ corresponds under this equivalence to the full subcategory of all $F\colon\smallint \ul{\mathbb F}^P_T\times\ul A \to\Spc$ satisfying the following conditions:
			\begin{enumerate}
				\item For every $f\colon B\to A$ in $T$ the restriction of $F$ to the non-full subcategory $\ul{\mathbb F}^P_T(B)\times\{f\}$ is semiadditive in the usual, non-parametrized sense.
				\item For every $p\colon B\to A$ in $P$ and $f\colon A\to A'$ in $T$ a certain natural \emph{Segal map} $F(A,B_+,f)\to F(B,B_+,fp)$ is an equivalence.
			\end{enumerate}
			Obviously, the first property is stable under sifted colimits, while the second property is even stable under arbitrary colimits.
		\end{proof}
	\end{lemma}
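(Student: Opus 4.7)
The plan is to reduce the claim to a statement about ordinary (non-parametrized) functor categories and then verify that the defining conditions of the subcategory are preserved by sifted colimits one at a time.

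First, I would reduce to the case where $A\in T$ is representable. Any $A\in\PSh(T)$ is canonically the colimit of the representables mapping to it; since restrictions along $A\to A'$ in the ambient category $\ul\Fun_T(\ul{\mathbb F}^P_{T,*},\ul\Spc_T)$ are left adjoints (hence preserve sifted colimits) and the inclusion $\ul\CMon^P_T\hookrightarrow\ul\Fun_T(\ul{\mathbb F}^P_{T,*},\ul\Spc_T)$ is a parametrized subcategory compatible with restrictions, it suffices to check closure fiberwise at representables.

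Next, I would unpack the subcategory $\ul\CMon^P_T(A)$ concretely. Using the standard identification of sections of an internal hom with ordinary functors out of an unstraightening (along the lines of \cite{CLL_Global}*{Remark~2.2.14}), one rewrites $\ul\Fun_T(\ul{\mathbb F}^P_{T,*},\ul\Spc_T)(A)$ as a functor category $\Fun(E,\Spc)$ for an appropriate auxiliary $1$-category $E$. Under this identification, the subcategory $\ul\CMon^P_T(A)$ is characterized by two kinds of pointwise conditions: a \emph{semiadditivity} condition asking that certain fibered products agree with fibered coproducts via an explicit comparison, and a \emph{Segal} condition asking that a certain natural map of values of $F$ is an equivalence.

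Finally, I would verify that each of these conditions is stable under sifted colimits of diagrams in $\Fun(E,\Spc)$. Colimits in $\Fun(E,\Spc)$ are computed pointwise, so both conditions reduce to statements about sifted colimits of diagrams in $\Spc$. The Segal conditions demand that a comparison map is an equivalence; since any colimit of equivalences is an equivalence, such conditions are preserved by \emph{all} (not just sifted) colimits. The semiadditivity condition is a statement about finite products; since sifted colimits in $\Spc$ commute with finite products, the condition is preserved by sifted colimits.

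The only subtle point I anticipate is the clean identification of $\ul\CMon^P_T(A)$ with the expected pointwise conditions on $\Fun(E,\Spc)$; once one invokes the appropriate unstraightening of the internal hom and the description of $P$-commutative monoids from \cite{CLL_Global}, everything else is a formal check. No further parametrized higher-categorical machinery beyond this unstraightening is needed for the closure under sifted colimits itself.
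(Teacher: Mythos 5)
Your proposal matches the paper's argument essentially step for step: reduce to $A$ representable via cocontinuity of restrictions, rewrite the internal hom as an ordinary functor category via unstraightening (the paper cites \cite{CLL_Global}*{Remark~2.2.14} and \cite{CLL_Global}*{Remark~4.9.9}), and then observe that the Segal conditions are stable under all colimits while the fiberwise semiadditivity condition is stable under sifted colimits because sifted colimits in $\Spc$ commute with finite products. The argument is correct and the identifications you flag as the only subtle points are precisely the ones the paper outsources to the cited remarks.
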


	\begin{remark}
		In fact, both the above lemma as well as its proof do not need any orbitality assumption on $T$.
	\end{remark}

	\begin{proof}[Proof of Theorem~\ref{thm:universal-cocontinuous}]
		For $\ul\CMon^P_T$, we observe that as $f_*$ is a right adjoint, it in particular preserves finite products. Since both source and target are semiadditive, we conclude that $f_*$ also preserves finite \emph{co}products, so it only remains to show that it also preserves sifted colimits. The latter is however immediate by combining Proposition~\ref{prop:T-spc-sifted} with Lemma~\ref{lemma:cmon-sifted-closed}, finishing the argument for $\ul\CMon^P_T$.

		For $\ul\CMon^P_{P\triangleright T}$, we recall from \cite{CLL_Clefts}*{Theorems~6.18 and 6.19} that there exists a fully faithful $T$-functor $\iota_!\colon\ul\CMon^P_{P\triangleright T}\to\ul\CMon^P_T$, such that $\iota_!\colon\ul\CMon^P_{P\triangleright T}(X)\to\ul\CMon^P_T(X)$ sits in a sequence of adjoint functors $\iota_!\dashv\iota^*\dashv\iota_*$ for every $X\in T$. The total mate of the structure equivalence $\iota_!f^*\simeq f^*\iota_!$ then provides an equivalence $f_*\iota^*\simeq \iota^*f_*$. Combining this with full faithfulness of $\iota_!$, we then conclude that $f_*\colon\ul\CMon^P_{P\triangleright T}(A)\to\ul\CMon^P_{P\triangleright T}(B)$ agrees with the composite
		\begin{equation*}
			\ul\CMon^P_{P\triangleright T}(A)\xrightarrow{\;\iota_!\;}\ul\CMon^P_T(A)\xrightarrow{\;f_*\;}\ul\CMon^P_T(B)\xrightarrow{\;\iota^*\;}\ul\CMon^P_T(B),
		\end{equation*}
		each of which is cocontinuous.

		Next, we recall \cite{CLL_Clefts}*{Definition~8.9} that $\ul\Sp^P_T$ is obtained from $\ul\CMon^P_T$ by pointwise taking the tensor product in $\PrL$ with the category $\mySp$ of spectra. By the above, $f^*\colon\ul\CMon^P_T(B)\rightleftarrows\ul\CMon^P_T(A)\noloc f_*$ is an internal adjunction in $\Pr^\text{L}$. By $2$-functoriality of $\mySp\otimes\blank$, we can therefore identify $f_*\colon\ul\Sp^P_T(B)\to\ul\Sp^P_T(A)$ with the cocontinuous functor $\mySp\otimes f_*\colon {\mySp}\otimes{\ul\CMon^P_T}(A)\to{\mySp}\otimes{\ul\CMon^P_T}(B)$.

		The argument for $\ul\Sp^P_{P\triangleright T}$ is analogous.
	\end{proof}

        \section{Comparison with Sanders' approach}
        \label{sec:Comparison_Sanders}
        In \cite{Sanders2019Adams}*{Theorem~2.9}, Sanders provides an alternative treatment of formal Adams isomorphisms. In this appendix, we discuss the relationship between his approach and ours.

        Sanders' theorem is formulated in the context of \textit{geometric functors} $f^*\colon \Dd \to \Cc$ between \textit{rigidly-compactly generated tensor-triangulated categories}. In the language of higher category theory, this corresponds to the condition that $\Dd$ and $\Cc$ are compactly generated stable symmetric monoidal $\infty$-categories whose compact objects are precisely the dualizable objects, and $f^*$ is a symmetric monoidal colimit-preserving functor. This implies that $f^*$ admits a right adjoint $f_*$ satisfying the projection formula, which in turn admits a further right adjoint $f^!$.

        Consider now a full subcategory $\Bb \subseteq \Cc$, and suppose that the inclusion $i_!\colon \Bb \hookrightarrow \Cc$ fits in an adjoint triple $i_! \dashv i^* \dashv i_*$, so that $\Bb$ is in particular closed under colimits in $\Cc$. In total we have
        \[
        \begin{tikzcd}
        	\Bb \ar[rr, hookrightarrow, shift left = 5.25, "i_!"] \ar[rr,leftarrow, shift left = 1.75, "i^*"{description}] \ar[rr,hookrightarrow, shift right = 1.75, "i_*"{description}] \ar[rr,leftarrow, dashed, shift right = 5.25, "\times"{description}]
        	&& \Cc \ar[rr, dashed, shift left = 5.25, "\times"{description}] \ar[rr,leftarrow, shift left = 1.75, "f^*"{description}] \ar[rr,shift right = 1.75, "f_*"{description}] \ar[rr,leftarrow, shift right = 5.25, "f^!"']
        	&& \Dd,
        \end{tikzcd}
        \]
        where the dotted maps indicate that the adjoints do not necessarily exist. Sanders proves the following theorem:

        \begin{theorem}[{\cite{Sanders2019Adams}*{Theorem~2.9}}]
        \label{thm:Sanders}
            Consider the previous situation, and assume further that $\Bb$ is generated under colimits by the thick tensor ideal
            \[
            \Ii \coloneqq \{b \in \Bb \cap \Cc^c \mid \text{$f_*(c \otimes b) \in \Dd^c$ for all $c \in \Cc^c$}\},
            \]
            where $\Cc^c \subseteq \Cc$ and $\Dd^c \subseteq \Dd$ denote the subcategories of compact (equivalently: dualizable) objects. Define $\omega_f \coloneqq f^!\unit_{\Dd}$. Then:
        \begin{enumerate}[(a)]
            \item The composite $i^*f^!\colon \Dd \to \Bb$ has a right adjoint;
            \item There is a canonical equivalence $i^*(\psi)\colon i^*(\omega_f \otimes f^*(-)) \iso i^* f^!$;
            \item There is a canonical equivalence $i^*(\psi^\mathrm{ad})\colon i^* f^* \iso i^*(\iHom(\omega_f, f^!(-)))$;
            \item The composite $i^*f^*\colon \Dd \to \Bb$ admits a left adjoint, which we will denote by
            \[
            (fi)_! \colon \Bb \to \Dd,
            \]
            and there is a canonical equivalence
            \[
            \varpi \colon (fi)_! \iso f_*(i_!(-) \otimes \omega_f),
            \]
            which we call the \emph{Wirthm\"uller isomorphism} associated to $f^*$ and $\Bb$.
        \end{enumerate}
        \end{theorem}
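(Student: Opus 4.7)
The plan is to prove parts (a)--(d) by first establishing the key equivalence (b) on a set of generators of $\Bb$, and then deducing the remaining claims formally. One starts by constructing the comparison map $\psi\colon \omega_f \otimes f^*(-) \to f^!(-)$: the composite $f_*(\omega_f \otimes f^*d) \simeq f_*\omega_f \otimes d \to d$, in which the equivalence is the projection formula and the arrow is the counit of $f_* \dashv f^!$, adjoints to $\psi_d$. Via the adjunction $(-)\otimes\omega_f \dashv \iHom(\omega_f,-)$, $\psi$ is equivalent to a natural transformation $\psi^{\mathrm{ad}}\colon f^*(-) \to \iHom(\omega_f, f^!(-))$.

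The core technical step is (b), namely that $i^*(\psi)$ is an equivalence of functors $\Dd \to \Bb$. By the adjunction $i_! \dashv i^*$, this reduces to showing that the induced map $\maps_\Cc(i_!x, \omega_f \otimes f^*d) \to \maps_\Cc(i_!x, f^!d)$ is an equivalence for every $x \in \Bb$ and $d\in\Dd$. Since $\Bb$ is generated under colimits by $\Ii$ and since $\maps_\Cc(i_!(-), y)$ turns colimits into limits, this further reduces to the case $x = b \in \Ii$. The right-hand side then equals $\maps_\Dd(f_*b, d)$ by the adjunction $f_* \dashv f^!$. For the left-hand side, dualizability of $b$ together with $\unit_\Cc \simeq f^*\unit_\Dd$ and the projection formula yield
\[
\maps_\Cc(b, \omega_f \otimes f^*d) \simeq \maps_\Dd(\unit, f_*(b^\vee \otimes \omega_f) \otimes d).
\]
The decisive identification is $f_*(b^\vee \otimes \omega_f) \simeq (f_*b)^\vee$, which follows from the formula $f_*\iHom(x, f^!y) \simeq \iHom(f_*x, y)$ (the adjoint of the projection formula, verified by a standard Yoneda argument) applied with $y = \unit_\Dd$. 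The defining property of $\Ii$ forces $f_*b$ to be compact, hence dualizable in the rigidly-compactly generated category $\Dd$, so that $\maps_\Dd(\unit, (f_*b)^\vee \otimes d) \simeq \maps_\Dd(f_*b, d)$, matching the right-hand side. A naturality check confirms that the resulting equivalence is induced by $\psi_d$.

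With (b) established, the remaining parts are formal. For (a), the functor $i^*(\omega_f \otimes f^*(-))$ is manifestly cocontinuous, so (b) implies that $i^*f^!$ is cocontinuous and hence admits a right adjoint in the presentable setting. Part (c) follows either by an analogous mapping-space computation on the generators $b \in \Ii$, or by passing to adjoints of the equivalence in (b). For (d), we use (c) to compute
\[
\maps_\Bb(x, i^*f^*d) \simeq \maps_\Cc(i_!x, \iHom(\omega_f, f^!d)) \simeq \maps_\Cc(i_!x \otimes \omega_f, f^!d) \simeq \maps_\Dd\bigl(f_*(i_!x \otimes \omega_f), d\bigr)
\]
naturally in $d$, which exhibits $(fi)_! \coloneqq f_*(i_!(-) \otimes \omega_f)\colon \Bb \to \Dd$ as the left adjoint of $i^*f^*$ and simultaneously provides the Wirthm\"uller isomorphism $\varpi$.

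The main obstacle will be establishing the identity $f_*(b^\vee \otimes \omega_f) \simeq (f_*b)^\vee$ together with the dualizability of $f_*b$ for $b \in \Ii$: both crucially rely on the rigidly-compactly generated hypothesis and on the specific definition of $\Ii$, which was designed precisely so that $f_*b$ is compact whenever $b \in \Ii$.
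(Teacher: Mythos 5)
Your proposal is correct but takes a genuinely different route from the one the paper summarizes (following Sanders). You attack part (b) head-on by a mapping-space computation on the generators $b \in \Ii$, reducing everything to the duality identity $f_*(b^\vee \otimes \omega_f) \simeq (f_*b)^\vee$ (obtained from $f_*\underline{\Hom}(-,f^!(-)) \simeq \underline{\Hom}(f_*(-),-)$ at $y=\unit$) together with the compactness-hence-dualizability of $f_*b$ supplied by the definition of $\Ii$; you then derive (a) from (b), and obtain (c)--(d) by further mapping-space manipulations. The paper instead proves (a) \emph{first} (cocontinuity of $i^*f^!$ because the left adjoint $f_*i_!$ sends the compact generators $\Ii$ to compacts), then deduces (b) from a general principle — a cocontinuous lax $\Dd$-linear right adjoint of a $\Dd$-linear functor is strictly $\Dd$-linear when $\Dd$ is rigidly compactly generated — and constructs the left adjoint in (d) objectwise as $(fi)_!(b) \simeq f_*(b^\vee)^\vee$. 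Your route is more elementary and makes the role of the defining property of $\Ii$ very transparent; the paper's route bypasses having to identify a concrete equivalence of mapping spaces with the canonical map $\psi$, which is exactly where yours is least rigorous. Indeed, the sentence ``A naturality check confirms that the resulting equivalence is induced by $\psi_d$'' underplays what the paper explicitly flags as the hardest step (``quite intricate,'' pointing to Sanders' Proposition~2.31): the equivalence you produce via dualizability and adjunctions is built from a different chain than the one defining $\psi$, so one needs a genuine compatibility argument, not merely naturality in $d$. The same caveat applies to your claim that (c) ``follows by passing to adjoints of (b)'': $i^*$ does not commute with $\underline{\Hom}(\omega_f,-)$ in general, so this requires its own argument (which, as you also note, can be done on generators). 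Modulo fleshing out those coherence checks, the proposal is sound.
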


        In order to be able to compare the Wirthm\"uller isomorphism $(fi)_! \simeq f_*(i_!(-) \otimes \omega_f)$ of Sanders to our abstract Adams isomorphism, let us briefly summarize the proof of \Cref{thm:Sanders}:
        \begin{enumerate}
        	\item Part (a) follows from the fact that left adjoint $f_* i_!\colon \Bb \to \Cc$ of $i^* f^!$ sends the generating set of compact objects $\Ii \subseteq \Bb$ to compact objects in $\Dd$, by definition of $\Ii$.
        	\item Part (b) is equivalent to the statement that the lax $\Dd$-linear functor $i^* f^!$ is in fact $\Dd$-linear; since it preserves colimits by (a) and its left adjoint $f_* i_!$ is $\Dd$-linear, this is an instance of a general fact: for $\Dd$ rigidly compactly generated, the right adjoint $G\colon \Nn \to \Mm$ of a $\Dd$-linear left adjoint $F\colon \Mm \to \Nn$ between $\Dd$-linear categories is again $\Dd$-linear as soon as $G$ preserves colimits.
        	\item  To construct the left adjoint in part (d), it suffices to do so objectwise on the generators $b \in \Ii \subseteq \Bb$. There one can show that we have $b^{\vee} \in \Ii$, and a computation shows that the desired object $(f i)_!(b)$ is given by $f_*(b^{\vee})^{\vee}$.
        	\item Finally, checking that the canonical comparison maps in (c) and (d) are equivalences is quite intricate, see \cite{Sanders2019Adams}*{Proposition~2.31}. However, when there is an equivalence $i^*\omega_f \simeq \unit_{\Bb}$ (as will be the case in the context we are interested in) the equivalences in (c) and (d) are direct consequences of the equivalence of (b).
        \end{enumerate}

        We now have the following comparison result:
        \begin{proposition}
        \label{prop:Comparison_Sanders}
            Let $\Cc$ be a symmetric monoidal $T$-category, that is, a functor $\Cc\colon T\catop \to \CMon(\Cat)$. Assume that $\Cc(C)$ is rigidly compactly generated for all $C \in T$. Let $P \subseteq T$ be an atomic orbital subcategory and assume that $\Cc$ is $P$-presentable and $P$-semiadditive, and that the tensor product $-\otimes- \colon \Cc\times \Cc\rightarrow \Cc$ preserves $P$-colimits in each variable, in the sense of \cite{martiniwolf2022presentable}*{Definition 8.1.1}. Let $f\colon A \to B$ be a morphism in $T$ whose diagonal is in $\ulfinPsets$. Then:
            \begin{enumerate}
            	\item The functor $f_*\colon \Cc(A)\rightarrow \Cc(B)$ preserves colimits, and so \Cref{thm:adams-gen} provides an abstract Adams isomorphism $\Nm_f^{\fr}\colon f^{\fr}_!\iso f_*$ on $f$-free objects.
                \item The functor $f^*\colon \Cc(B) \to \Cc(A)$, equipped with the subcategory
                \[
                \Bb \coloneqq \Cc(A)_{\free{f}} \subseteq \Cc(A)
                \]
                of $f$-free objects, satisfies all the assumptions of \Cref{thm:Sanders}, thus giving a Wirthm\"uller isomorphism
                \[
                    \varpi\colon (f i)_! \iso f_*(i_!(-) \otimes \omega_f).
                \]
                \item There exists a preferred equivalence $i^*\omega_f \simeq \unit_{\Cc(\EF_f)}$ such that the composite
                \[
                    (f i)_! \xrightarrow{\varpi} f_*(i_!(-) \otimes \omega_f) \simeq f_*i_!(- \otimes i^*\omega_f) \simeq f_*i_!(-)
                \]
                agrees with the norm map $\Nm_f\colon (f i)_! = f^{\fr}_! \iso f_*i_!(-)$.
            \end{enumerate}
        \end{proposition}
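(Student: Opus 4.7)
\emph{Parts (1) and (2).} For part (1), since $\Cc$ is symmetric monoidal, $f^*$ is symmetric monoidal and cocontinuous between rigidly compactly generated categories, hence preserves dualizable objects; by rigidity these coincide with compact objects, so $f^*$ preserves compacts and $f_*$ preserves colimits, making \Cref{thm:adams-gen} applicable. For part (2), the adjoint triple $i_!\dashv i^*\dashv i_*$ is provided by \Cref{lem:BorelObjectsOverBF}. It remains to verify that $\Bb=\Cc(A)_{\free f}$ is generated under colimits by $\Ii$. By \Cref{cor:BorelObjectsGenerated}, every $f$-free object is a colimit of objects $p_!Y$ for $(p\colon C\to A)\in\Ff_f$ and $Y\in\Cc(C)^c$; each such $p_!Y$ is compact in $\Cc(A)$, and for any $c\in\Cc(A)^c$ the projection formula together with $P$-semiadditivity give
\[
	f_*(c\otimes p_!Y)\simeq f_*p_!(p^*c\otimes Y)\simeq (fp)_*(p^*c\otimes Y),
\]
which is compact since $fp\in P$ implies $(fp)_*\simeq(fp)_!$ preserves compacts. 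Hence $p_!Y\in\Ii$ and $\Ii$ generates $\Bb$ under colimits.

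\emph{Part (3), construction of the identification.} Combining the Adams equivalence from part (1) with $i_!$ produces a natural equivalence $(fi)_!=f^{\fr}_!i_!\iso f_*i_!$ of cocontinuous functors $\Cc(\EF_f)\to\Cc(B)$. Taking right adjoints yields a natural equivalence $i^*f^*\iso i^*f^!$ of functors $\Cc(B)\to\Cc(\EF_f)$, whose evaluation at $\unit_B$ produces
\[
	\unit_{\Cc(\EF_f)}\simeq i^*f^*\unit_B \iso i^*f^!\unit_B = i^*\omega_f,
\]
using symmetric monoidality of $f^*$ and $i^*$. This is the preferred identification $i^*\omega_f\simeq\unit_{\Cc(\EF_f)}$.

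\emph{Compatibility with Sanders' map.} The expected main obstacle is showing that Sanders' map $\varpi$, simplified to a natural transformation $(fi)_!\to f_*i_!$ using the above identification together with the projection formula $i_!(-\otimes i^*(-))\simeq i_!(-)\otimes(-)$, agrees with $\Nm_f$. The projection formula for $i_!$ will be derived from the projection formulas for the maps $p\in\Ff_f\subseteq P$ (which hold by hypothesis) upon writing $\EF_f$ as a $P$-colimit of representables via \Cref{rk:EF-colim}. For the comparison itself, the plan is to pass to mates: both $\varpi$ and $\Nm_f$ adjoint to natural equivalences $i^*f^*\iso i^*f^!$, and $\Nm_f$ adjoints by construction to the very equivalence used above to define our identification of $i^*\omega_f$. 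Using that both sides are canonical in an appropriate $\Cc(B)$-linear sense (for Sanders' side this is essentially his step (b), for the Adams side this follows from the projection formula for $f_*$), and invoking the mate calculus from Appendix~\ref{sec:BC} to track compatibilities, the two mates coincide and hence so do the original transformations. The delicate bookkeeping between Sanders' construction of $\varpi$ via $\psi\colon\omega_f\otimes f^*(-)\to f^!(-)$ and our construction of $\Nm_f$ as a mate of the Beck-Chevalley zig-zag of \eqref{eq:Nm-zig-zag} is where the real work lies.
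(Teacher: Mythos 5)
Parts (1) and (2) follow essentially the same route as the paper: part (1) is the standard fact that a geometric functor between rigidly compactly generated categories has a right adjoint preserving colimits; for part (2) you correctly invoke \Cref{lem:BorelObjectsOverBF} for the adjoint triple and \Cref{cor:BorelObjectsGenerated} for the generators, and your chain of equivalences $f_*(c\otimes p_!Y)\simeq(fp)_*(p^*c\otimes Y)\simeq(fp)_!(p^*c\otimes Y)$ via the projection formula and the norm equivalences is the same computation the paper performs. (You leave implicit that $p^*c\otimes Y$ is compact, which requires noting that $p^*$ preserves compacts because $p_*\simeq p_!$ is cocontinuous, and that compacts are closed under tensor since they coincide with dualizables; the paper spells this out.) Your construction of the identification $i^*\omega_f\simeq\unit_{\Cc(\EF_f)}$ in part (3) by passing to the right adjoint of $\Nm_f^{\fr}$ and evaluating at the unit is also exactly what the paper does.

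The genuine gap is the compatibility verification in part (3). You correctly locate the key point ($\Cc(B)$-linearity of the comparison on both sides) and the relevant tools (mate calculus, projection formulas), but you explicitly stop at ``the delicate bookkeeping \dots\ is where the real work lies'' without doing that work. The paper's argument proceeds in a specific reduction chain that is not just ``the projection formula for $f_*$'': after passing to right adjoints and reducing the desired diagram to $\Cc(B)$-linearity of $(\Nm_f)^{\mathrm{ad}}\colon i^*f^*\to i^*f^!$, one passes to left adjoints again to reduce to $\Cc(B)$-linearity of $\Nm_f^{\fr}$, then by another adjunction to $\Cc(B)$-linearity of $\smash{\Nmadjdual}^{\fr}_f\colon i^*\to i^*f^*f_*$. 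This is then an assertion that each map in the zig-zag \eqref{eq:Nm-zig-zag} (the counit $\epsilon$, the Beck-Chevalley map $\BC_*$, and the norm $\Nm_\Delta$) is compatible with the projection formula equivalences. The first two are direct; the content is $\Nm_\Delta$, and more generally $\Nm_p$ for $p\in\ulfinPsets$, which is handled by unwinding the definition of $\Nmadjdual_p$ via its own zig-zag, reducing finally to $\Nm_{\Delta_p}$ for the diagonal $\Delta_p$, which is a disjoint summand inclusion; there one invokes the uniqueness statement of \cite{CLL_Global}*{Lemma~4.1.4}. Without this reduction chain the proof of part (3) is incomplete.
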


        In part (3), we have identified $\Cc(A)_{\free{f}}$ with $\Cc(\EF_f)$ in light of \Cref{lem:BorelObjectsOverBF}.

        \begin{proof}
            Part (1) follows immediately from the fact that $f^*$ is by assumption a geometric functor between rigidly compactly generated categories. Next we show part (2). For this we further note that the inclusion $i_!\colon \Bb = \Cc(A)_{\free{f}} \hookrightarrow \Cc(A)$ admits a right adjoint $i^*$ which admits a further right adjoint $i_*$ by \Cref{lem:BorelObjectsOverBF}. It remains to check that $\Bb$ is generated under colimits by the subcategory
            \[
            \Ii \coloneqq \{b \in \Bb \cap \Cc(A)^c \mid \text{$f_*(c \otimes b) \in \Cc(B)^c$ for all $c \in \Cc(A)^c$}\}.
            \]
            By \Cref{cor:BorelObjectsGenerated}, $\Bb$ is generated under colimits by the objects of the form $i_!X$ for $i\colon C \to A$ in $\Ff \subseteq P_{/A}$ and $X \in \Cc(C)$. Since $\Cc(C)$ is compactly generated, we may in fact restrict to compact objects $X \in \Cc(C)$. It thus remains to show that $i_!X$ is contained in $\Ii$ for every compact object $X \in \Cc(C)$. In other words, we have to show that for every $c \in \Cc(A)^c$, the object $f_*(c \otimes i_!X)$ is again compact. But this is a consequence of the equivalences
            \[
                f_*(c \otimes i_!X) \simeq f_*i_!(i^*c \otimes X) \overset{\Nm_i}{\simeq} f_*i_*(i^*c \otimes X) \overset{\Nm_{fi}}{\simeq} (fi)_!(i^*c \otimes X),
            \]
            using the fact that the functors $i^*$ and $(fi)_!$ preserves compact objects (as their right adjoints preserve colimits) and the fact that compact objects in $\Cc(C)$ are closed under tensor product (as they agree with the dualizable objects). This finishes the proof of part (2).

            For part (3), consider the abstract Adams isomorphism $\Nm_f\colon f_!^{\fr} \iso f_*i_!(-)$ from \Cref{thm:adams-gen}. Passing to right adjoints gives an equivalence $(\Nm_f)^{\mathrm{ad}}\colon i^*f^* \iso i^*f^!$, and evaluating at $\unit_{\Cc(B)}$ provides an equivalence
            \[
                (\Nm_f)^{\mathrm{ad}} \unit_{\Cc(B)} \colon \unit_{\Cc(\EF_f)} \simeq i^*f^*\unit_{\Cc(B)} \iso i^*f^!\unit_{\Cc(B)} = i^*\omega_f.
            \]
            It remains to show that under this equivalence, the norm map $\Nm_f$ agrees with the composite
            \[
                (f i)_!(-) \xrightarrow{\varpi} f_*(i_!(-) \otimes \omega_f) \simeq f_*i_!( - \otimes i^*\omega_f),
            \]
            where the last equivalence is the projection formula for $i_!$. Equivalently, by passing to right adjoints, it will suffice to show that the equivalence $i^*f^* \iso i^*f^!$ agrees under this equivalence with the map
            \[
                i^*f^*(-) \xrightarrow{i^*(\psi^{\mathrm{ad}})} i^*(\iHom(\omega_f, f^!(-)) \simeq \iHom(i^*\omega_f, i^*f^!(-)),
            \]
            where the last equivalence is the closed monoidal structure on $i^*$, obtained by adjunction from the projection formula for $i_!$. By the definition of the maps $\psi$ and $\psi^{\mathrm{ad}}$ in \cite[Remark~2.15]{Sanders2019Adams}, the latter composite is adjoint to the map
            \[
                i^*\omega_f \otimes i^*f^*(-) \simeq i^*(\omega_f \otimes f^*(-)) \xrightarrow{i(\psi)} i^*f^!(-),
            \]
            and hence it will suffice to show that the following diagram commutes:
            \[
            \begin{tikzcd}
                \unit_{\Cc(\EF_f)} \otimes i^*f^*(-) \ar{rr}{\simeq} \dar[swap]{(\Nm_f)^{\mathrm{ad}} \unit_{\Cc(B)} \otimes \id} && i^*f^*(-) \dar{(\Nm_f)^{\mathrm{ad}}} \\
                i^*\omega_f \otimes i^*f^*(-) \rar{\simeq} & i^*(\omega_f \otimes f^*(-)) \rar{i(\psi)} & i^*f^!(-).
            \end{tikzcd}
            \]
            Observe that this is a special case of the statement that the equivalence \[(\Nm_f)^{\mathrm{ad}}\colon i^*f^* \iso i^*f^!\] is $\Cc(B)$-linear, in the sense that for all objects $X, Y \in \Cc(B)$ the following diagram commutes:
            \[
            \begin{tikzcd}
                i^*f^*(X) \otimes i^*f^*(Y) \ar{rr}{\simeq} \dar[swap]{(\Nm_f)^{\mathrm{ad}}X \otimes \id} && i^*f^*(X \otimes Y) \dar{(\Nm_f)^{\mathrm{ad}}} \\
                i^*f^!(X) \otimes i^*f^*(Y) \rar{\simeq} & i^*(f^!(X) \otimes f^*(Y)) \rar{\simeq} & i^*f^!(X \otimes Y).
            \end{tikzcd}
            \]
            By passing to left adjoints, this is equivalent to the norm map $\Nm_f^{\fr}\colon f_!^{\fr} \iso f_*i_!$ being $\Cc(B)$-linear, which in turn is equivalent to the map $\smash{\Nmadjdual}^{\fr}_f\colon i^* \to i^*f^*f_*$ being $\Cc(B)$-linear, i.e.~for all $X \in \Cc(A)$ and $Y \in \Cc(B)$ the following diagram commutes:
            \[
            \begin{tikzcd}
                i^*(X) \otimes i^*f^*(Y) \ar{rr}{\simeq} \dar[swap]{\smash{\Nmadjdual}^{\fr}_f \otimes \id} && i^*(X \otimes f^*Y) \dar{\smash{\Nmadjdual}^{\fr}_f} \\
                i^*f^*f_*(X) \otimes i^*f^*(Y) \rar{\simeq} & i^*f^*(f_*(X) \otimes Y) \rar{\simeq} & i^*f^*f_*(X \otimes f^*Y).
            \end{tikzcd}
            \]
            This statement is a somewhat tedious exercise in calculus of mates: one needs to show that all the Beck-Chevalley transformations used in the definition $\smash{\Nmadjdual}^{\fr}_f$ are compatible with the projection formula equivalences. We will only sketch the proof. Looking at the zig-zag \eqref{eq:Nm-zig-zag} on page \pageref{eq:Nm-zig-zag}, one observes that that the maps $\epsilon$ and $\BC_*$ are compatible with the projection formula equivalences, and it thus remains to show the map $\Nm_{\Delta}$ is also compatible. More generally, we will argue that for every morphism $p\colon A \to B$ in $\ulfinPsets$ the norm equivalence $\Nm_{p}\colon p_! \iso p_*$ is compatible with the projection formula equivalences. By adjunction, it again suffices to show this for the map $\Nmadjdual_p \colon \id \to p^*p_*$ from \cite[Construction~4.3.8]{CLL_Global}, which is defined using the analogous zig-zag \eqref{eq:Nm-zig-zag}. We thus obtain a further reduction to the statement for $\Nm_{\Delta_p}\colon (\Delta_p)_! \iso (\Delta_p)_*$. Since $\Delta_p$ is a disjoint summand inclusion in $\PSh(T)$, the statement here follows from the uniqueness statement from \cite[Lemma~4.1.4]{CLL_Global}.
        \end{proof}

		\begin{remark}
		In the language of \cite{Sanders2019Adams}, we have in particular shown that the category $\Cc(A)_{\free{f}}$ of $f$-free objects is contained in the \emph{compactness locus} of the functor $f^*$. In general the compactness locus may be larger, for example in the case of the inflation functor $\mySp_{G/N}\rightarrow \mySp_{G}$, see Theorem 6.2 of \textit{op.~cit.}
		\end{remark}

        \begin{remark}
            In \cite[Theorem~1.7]{BDS2016Grothendieck}, Balmer, Dell'Ambrogio, and Sanders prove a formal Wirthm\"uller isomorphism in the context of rigidly-compactly generated tensor-triangulated categories: given a geometric functor $f^*\colon \Dd \to \Cc$ whose right adjoint $f_*\colon \Dd \to \Cc$ preserves compact objects, the functor $f^*$ admits a left adjoint $f_!$ and there is an \textit{ur-Wirthm\"uller isomorphism}
            \[
                \varpi\colon f_!(-) \iso f_*(\omega_f \otimes -).
            \]
            Since the theorem by Sanders mentioned above reduces to this theorem by taking $\Bb = \Cc$, it follows that \Cref{prop:Comparison_Sanders} provides an equivalence between the ur-Wirthm\"uller isomorphism from \emph{op.~cit.} and our norm equivalences:
        \end{remark}

        \begin{corollary}
            In the situation of \Cref{prop:Comparison_Sanders}, assume the map $f\colon A \to B$  is a morphism in $P$. Then we have $\Bb = \Cc(A)$, and there exists a preferred equivalence $\omega_f \simeq \unit_{\Cc(A)}$ such that the composite
            \[
                f_!(-) \xrightarrow{\varpi} f_*(\omega_f \otimes -) \simeq f_*
            \]
            agrees with the norm map $\Nm_f\colon f_! \to f_*$.
        \end{corollary}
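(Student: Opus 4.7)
The proof is a direct specialization of Proposition~\ref{prop:Comparison_Sanders} to the case $f \in P$. First I would observe that when $f \in P$, every morphism $p\colon C \to A$ in $P$ satisfies $fp \in P$ (as $P$ is a subcategory of $T$), so that the family $\Ff_f = P_{/A}$ is maximal. Consequently $\EF_f = A$, the canonical map $i\colon \EF_f \hookrightarrow A$ is an equivalence, and $\Bb = \Cc(A)_{\free{f}} = \Cc(A)$, recovering the content of Example~\ref{ex:P-free}.

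Next, I would verify the remaining hypothesis of Proposition~\ref{prop:Comparison_Sanders}: since $f$ lies in $P$ and $P$ is atomic orbital, the diagonal $\Delta\colon A \to A \times_B A$ is a disjoint summand inclusion in $\PSh(T)$ and therefore belongs to $\ulfinPsets$. All hypotheses of the proposition are thus satisfied, and part (3) provides a preferred equivalence $i^*\omega_f \simeq \unit_{\Cc(\EF_f)}$. Under $i = \id$, this immediately yields the promised equivalence $\omega_f \simeq \unit_{\Cc(A)}$.

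Finally, to check that the composite $f_!(-) \xrightarrow{\varpi} f_*(\omega_f \otimes -) \simeq f_*$ recovers the norm map $\Nm_f$, I would invoke the corresponding statement in Proposition~\ref{prop:Comparison_Sanders}(3), asserting that the composite
\[
    (fi)_!(-) \xrightarrow{\varpi} f_*(i_!(-) \otimes \omega_f) \simeq f_*i_!(- \otimes i^*\omega_f) \simeq f_*i_!(-)
\]
agrees with $\Nm_f\colon (fi)_! = f_!^{\fr} \iso f_*i_!(-)$. Setting $i = \id$, so that $i_! = \id$ and $f_!^{\fr} = f_!$, this composite specializes precisely to the one in the corollary. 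I do not expect any substantial obstacle here: once the identifications $\EF_f = A$, $\Bb = \Cc(A)$, and $i = \id$ have been made, the argument is a routine unwinding of Proposition~\ref{prop:Comparison_Sanders} in this special case.
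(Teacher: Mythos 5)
Your proof is correct and follows exactly the same approach as the paper's, which simply invokes Proposition~\ref{prop:Comparison_Sanders} together with the observation from Example~\ref{ex:P-free} that every object of $\Cc(A)$ is $f$-free when $f\in P$. You spell out the intermediate identifications ($\Ff_f = P_{/A}$, $\EF_f = A$, $i = \id$) in more detail, and your verification that the diagonal of $f$ lies in $\ulfinPsets$ is redundant since this is already part of the assumed ``situation'' of the proposition, but nothing is wrong.
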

        \begin{proof}
            This is immediate from \Cref{prop:Comparison_Sanders}, using the observation that every object in $\Cc(A)$ is $f$-free (Example~\ref{ex:P-free}).
        \end{proof}

		\begin{remark}\label{rk:global-not-rigid}
			One can show that the usual smash product of spectra makes $\ul\mySp^\text{gl}$ into a symmetric monoidal global category such that the tensor product even preserves $\Glo$-colimits in each variable. However, in $\mySp^\text{gl}_G$ the dualizable objects do \emph{not} coincide with the compacts, so that the global Adams isomorphism is not an instance of Sanders' framework:

			If $p\colon G\to e$ denotes the unique homomorphism, then $p^*\Sigma^\infty_+ B_\text{gl}C_2\simeq p^*(\mathbb S/C_2)$ is compact (Example~\ref{ex:GlobalEquivHtpyTheory}), but its underlying non-equivariant spectrum is $\Sigma^\infty_+BC_2$, which is not compact and hence not dualizable. As the forgetful functor from $G$-global spectra to spectra is symmetric monoidal, it preserves dualizable objects, and accordingly the $G$-global spectrum $p^*\Sigma^\infty_+ B_\text{gl}C_2$ is not dualizable either.
		\end{remark}

	\section{The calculus of mates}\label{sec:BC}
	In this final appendix we recall some general identities involving Beck-Chevalley transformations (or \emph{mates}).

	Throughout, we work in a fixed ambient $(2,2)$-category $\textsf{U}$, which we for simplicity assume to be strict; the example we will apply these results to in the main text is the strict $(2,2)$-category of ($\infty$-)categories, functors, and homotopy classes of natural transformations. Recall once more that for a square
	\begin{equation}\label{diag:to-mate}
		\begin{tikzcd}
			A\arrow[r, "u^*"]\arrow[dr, Rightarrow, "\sigma"{description}] & B\\
			C\arrow[u, "f^*"]\arrow[r, "v^*"'] & D\arrow[u, "g^*"']
		\end{tikzcd}
	\end{equation}
	in which the two vertical maps admit left adjoints $f_!,g_!$, the Beck-Chevalley transformation $\BC_! = \BC_!(\sigma)\colon g_!u^*\to v^*f_!$ is defined as the composite
	\begin{equation*}
		g_!u^*\xrightarrow{\eta}g_!u^*f^*f_!\xrightarrow{\sigma}g_!g^*v^*f_!\xrightarrow{\epsilon} v^*f_!.
	\end{equation*}
	It will be convenient for us to write this composite in a more diagrammatic fashion as the \emph{pasting}
	\begin{equation*}
		\begin{tikzcd}[column sep=3em, row sep=2.5em]
			& A\arrow[r, "u^*"]\arrow[dr, Rightarrow, "\sigma"{description}] & B\arrow[r, "g_!"] & D\rlap.\\
			A\arrow[ur, bend left=15pt,"\id", ""{name=Y,marking}]\arrow[from=Y,to=r,Rightarrow,"\eta"{description}] \arrow[r, "f_!"'] & C\arrow[u, "f^*"{description}]\arrow[r, "v^*"'] & D\arrow[u, "g^*"{description}]\arrow[ur, "\id"', bend right=15pt, ""{name=X,marking,xshift=-4pt,yshift=4pt}]
				\arrow[from=u,to=X,Rightarrow,"\epsilon"{description}]
		\end{tikzcd}
	\end{equation*}
	We point out that any such pasting diagram indeed has an unambiguous composite (i.e.~the resulting natural transformation is independent of the order in which we paste the individual $2$-cells); see \cite{power-pasting} for a precise statement and proof.

	\begin{remark}
		If $u^*$ and $v^*$ admit \emph{right} adjoints $u_*,v_*$, there is a dually defined Beck-Chevalley transformation $\BC_*\colon f^*v_*\to u_*g^*$. Below we will restrict to the case of the map $\BC_!$, the other case being formally dual.
	\end{remark}

	\begin{lemma}\label{lemma:BC-unit-counit}
		Consider a square $(\ref{diag:to-mate})$ such that $f^*$ and $g^*$ admit left adjoints $f_!$ and $g_!$, respectively. Then the following diagrams of $2$-cells commute:
		\begin{equation}\label{diag:cancel-unit-counit}
			\begin{tikzcd}
				g_!u^*f^*\arrow[d, "\BC_!"']\arrow[r, "\sigma"] & g_!g^*v^*\arrow[d,"\epsilon"]\\
				v^*f_!f^*\arrow[r, "\epsilon"'] & v^*
			\end{tikzcd}
			\qquad
			\begin{tikzcd}
				u^*\arrow[r,"\eta"]\arrow[d,"\eta"'] & g^*g_!u^*\arrow[d, "\BC_!"]\\
				u^*f^*f_!\arrow[r,"\sigma"'] & g^*v^*f_!\rlap.
			\end{tikzcd}
		\end{equation}
		\begin{proof}
			We will prove the first statement (which is the only one used in the main text); the proof of the second statement is similar.

			Plugging in the definition of $\BC_!$, the lower left composite is given by the pasting
			\begin{equation}\label{diag:big-pasting}
				\begin{tikzcd}[column sep=3em, row sep=2.5em, execute at end picture={
						\fill[color=black, opacity=.075] ($(L.south west)-(4pt,0)$) rectangle ($(U.north east)-(2pt,0)$);}]
					& |[alias=U]| A\arrow[r, "u^*"]\arrow[dr, Rightarrow, "\sigma"{description}] & B\arrow[r, "g_!"] & D\rlap.\\
					A\arrow[ur, bend left=15pt,"\id", ""{name=Y,marking}]\arrow[from=Y,to=r,Rightarrow,"\eta"{description}] \arrow[r, "f_!"{description}] & C\arrow[u, "f^*"{description}]\arrow[r, "v^*"'] & D\arrow[u, "g^*"{description}]\arrow[ur, "\id"', bend right=15pt, ""{name=X,marking,xshift=-2pt,yshift=4pt}]
						\arrow[from=u,to=X,Rightarrow,"\epsilon"{description}]\\
					|[alias=L]| C\arrow[u, "f^*"]\arrow[ur, bend right=15pt, "\id"', ""{marking,name=Z,xshift=-2pt,yshift=4pt}]
						\arrow[from=u, to=Z, Rightarrow, "\epsilon"{description}]
				\end{tikzcd}
			\end{equation}
			By the triangle identity for the unit and counit of the adjunction $f_!\dashv f^*$, the pasting of the shaded subdiagram is the identity transformation $f^*\Rightarrow f^*$. Thus, the pasting $(\ref{diag:big-pasting})$ simplifies to the pasting of the unshaded portion on the right, which precisely gives the other composite in the diagram $(\ref{diag:cancel-unit-counit})$.
		\end{proof}
	\end{lemma}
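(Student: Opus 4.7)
The plan is to verify both squares by the standard string-diagram/pasting calculation for mates, using the triangle identities as the only nontrivial input. Since the two squares are formally dual (one cancels a unit of the adjunction $f_!\dashv f^*$ against a counit, the other cancels a unit-counit pair for $g_!\dashv g^*$), I will treat the first in detail and indicate that the second is obtained by the obvious dualisation.

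For the first square, I would unfold the definition of $\BC_!$ as the pasting of the three 2-cells $\eta_{f}$, $\sigma$, and $\epsilon_{g}$, and then adjoin the additional counit $\epsilon_{f}\colon f_!f^*\Rightarrow\id$ on the right. Diagrammatically this produces a pasting in which, along the left column, a unit $\eta_{f}\colon\id\Rightarrow f^*f_!$ is immediately followed (after a whiskering by $f^*$) by a counit $\epsilon_{f}\colon f_!f^*\Rightarrow\id$ -- precisely the configuration eliminated by the triangle identity for the adjunction $f_!\dashv f^*$. Collapsing this pair leaves exactly the pasting that represents the other composite $\sigma$ followed by $\epsilon_{g}$, proving commutativity.

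For the second square I would dually rewrite $\BC_!$ as its pasting and precompose with $\eta_{f}\colon\id\Rightarrow f^*f_!$. This time a unit-counit pair for the adjunction $g_!\dashv g^*$ appears in the middle column of the pasting (the counit $\epsilon_{g}$ at the top of $\BC_!$ meets the added unit $\eta_{g}\colon\id\Rightarrow g^*g_!$ on the left of the target), and the corresponding triangle identity for $g_!\dashv g^*$ collapses it, leaving the pasting of $\eta_{f}$ with $\sigma$.

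The only conceptual point that requires care is invoking the well-definedness of 2-dimensional pastings so that the rearrangements above are actually legitimate; for this I will cite \cite{power-pasting}, as the excerpt already does. After that, the argument is just two applications of a triangle identity, and I do not anticipate any genuine obstacle.
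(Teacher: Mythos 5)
Your argument for the first square is precisely the paper's: unfold $\BC_!$ as the pasting of $\eta_f$, $\sigma$, $\epsilon_g$, adjoin $\epsilon_f$ on the right, and cancel the resulting $\eta_f$/$\epsilon_f$ pair by the triangle identity for $f_!\dashv f^*$, leaving the pasting of $\sigma$ with $\epsilon_g$. The paper likewise only treats the first square in detail and declares the second ``similar,'' so you have reproduced its proof essentially verbatim. One small slip in your sketch of the second square: you say you would ``precompose with $\eta_f$,'' but the outer cell you need to adjoin there is $\eta_g\colon\id\Rightarrow g^*g_!$ (as your own parenthetical then correctly describes); after cancelling $\eta_g$ against $\epsilon_g$ via the triangle identity for $g_!\dashv g^*$, the remaining pasting is $\eta_f$ followed by $\sigma$, as you say.
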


	Finally, let us explain in which way Beck-Chevalley maps compose. All of the following statements are instances of \cite{kelly-street}*{Proposition~2.2}, and they are explicitly verified in \emph{loc.~cit.}~using similar pasting arguments to the one presented above.

	We begin with \emph{horizontal compositions} of Beck-Chevalley transformations:

	\begin{lemma}\label{lemma:BC-compose}
		Assume that all the vertical maps in the diagram
		\begin{equation*}
			\begin{tikzcd}
				A\arrow[r, "u^*"]\arrow[dr, Rightarrow, "\sigma"{description}] & B\arrow[r, "w^*"]\arrow[dr, Rightarrow, "\tau"{description}] & E\\
				C\arrow[u, "f^*"]\arrow[r, "v^*"'] & D\arrow[u, "g^*"{description}]\arrow[r, "x^*"'] & F\arrow[u, "h^*"']
			\end{tikzcd}
		\end{equation*}
		admit left adjoints. Then the composite
		\begin{equation*}
			\begin{tikzcd}
				h_!w^*u^*\arrow[r, "\BC_!(\tau)"] &[1em] x^*g_!u^*\arrow[r, "\BC_!(\sigma)"] &[1em] x^*v^*f_!
			\end{tikzcd}
		\end{equation*}
		of the Beck-Chevalley maps for the two little squares agrees with the Beck-Chevalley map of the total rectangle.\qed
	\end{lemma}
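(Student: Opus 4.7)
The plan is to mimic the pasting-diagram argument used in the proof of \Cref{lemma:BC-unit-counit}: unfold both the composite and the total Beck-Chevalley transformation as pastings of the basic $2$-cells $\sigma$, $\tau$, and the units and counits of the relevant adjunctions, and then identify the two using the triangle identity for $g_!\dashv g^*$.

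First I would write down the Beck-Chevalley map of the total rectangle as the pasting
\begin{equation*}
	\begin{tikzcd}[column sep=2.5em, row sep=2.25em]
		& A\arrow[r, "u^*"]\arrow[dr, Rightarrow, "\sigma"{description}] & B\arrow[r, "w^*"]\arrow[dr, Rightarrow, "\tau"{description}] & E\arrow[r, "h_!"] & F\\
		A\arrow[ur, bend left=15pt, "\id", ""{name=Y, marking}]\arrow[r, "f_!"'] \arrow[from=Y, to=r, Rightarrow, "\eta_f"{description}] & C\arrow[u, "f^*"{description}]\arrow[r, "v^*"'] & D\arrow[u, "g^*"{description}]\arrow[r, "x^*"'] & F\arrow[u, "h^*"{description}]\arrow[ur, bend right=15pt, "\id"', ""{name=X, marking, xshift=-4pt, yshift=4pt}]\arrow[from=u, to=X, Rightarrow, "\epsilon_h"{description}]
	\end{tikzcd}
\end{equation*}
giving the four-step composite $h_!w^*u^*\xrightarrow{\eta_f} h_!w^*u^*f^*f_!\xrightarrow{\sigma} h_!w^*g^*v^*f_!\xrightarrow{\tau} h_!h^*x^*v^*f_!\xrightarrow{\epsilon_h} x^*v^*f_!$. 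On the other hand, $\BC_!(\sigma)\circ\BC_!(\tau)$ unfolds to a six-step pasting that looks identical except that between the $\tau$ cell and the $\sigma$ cell we insert the extra pair
\[
	\cdots\xrightarrow{\epsilon_h}x^*g_!u^*\xrightarrow{\eta_{g}}x^*g_!g^*g_!u^*\text{ (then apply $\sigma$ with $g_!u^*$ inserted)}\xrightarrow{\epsilon_g}\cdots,
\]
i.e.\ the $g_!\dashv g^*$ unit--counit pair lands on the $g^*$ that appears between $\tau$ and $\sigma$.

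The key step is then exactly the same move used in \Cref{lemma:BC-unit-counit}: by the triangle identity for $g_!\dashv g^*$, the composite $g^*\xrightarrow{\eta_g}g^*g_!g^*\xrightarrow{\epsilon_g}g^*$ is the identity (and dually on $g_!$), so the shaded subregion containing the inserted $(\eta_g,\epsilon_g)$-pair in the six-step pasting collapses, leaving precisely the four-step pasting representing $\BC_!(\tau\circ\sigma)$. Since all of the remaining $2$-cells appear in the same order with the same labelling functors surrounding them, the coherence of pasting in a strict $(2,2)$-category (in the sense of \cite{power-pasting}, as invoked after \eqref{diag:to-mate}) guarantees the two pastings agree as $2$-cells.

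The only potential obstacle is bookkeeping: one must be careful that the $\eta_g$ inserted after $\epsilon_h$ and the $\epsilon_g$ inserted before the final $\epsilon_h$ are genuinely pasted on the $g^*$ wedged between $\tau$ and $\sigma$, so that the triangle identity applies along the correct string. This is purely a diagrammatic exercise; no new categorical input is needed beyond the triangle identities and pasting coherence.
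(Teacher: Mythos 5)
Your high-level plan — unfold both sides as pastings, line them up, and cancel an inserted unit--counit pair with a triangle identity — is the right idea, and it is in the spirit of the paper's proof of Lemma~\ref{lemma:BC-unit-counit}. (The paper itself does not give a direct argument for this lemma; it simply cites \cite[Proposition~2.2]{kelly-street}.) However, the specific unfolding you wrote down is wrong, and the wrongness matters because it changes which triangle identity is supposed to apply and where.

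Unwinding the definitions, $\BC_!(\sigma)\circ\BC_!(\tau)$ (with the correct whiskerings) is the six-step composite
\[
h_!w^*u^* \xrightarrow{\eta_g} h_!w^*g^*g_!u^* \xrightarrow{\tau} h_!h^*x^*g_!u^* \xrightarrow{\epsilon_h} x^*g_!u^* \xrightarrow{\eta_f} x^*g_!u^*f^*f_! \xrightarrow{\sigma} x^*g_!g^*v^*f_! \xrightarrow{\epsilon_g} x^*v^*f_!.
\]
Your formula inserts $\eta_g$ out of $x^*g_!u^*$, with target $x^*g_!g^*g_!u^*$; the step there is in fact $\eta_f$, with target $x^*g_!u^*f^*f_!$, and $\sigma$ cannot be applied at $x^*g_!g^*g_!u^*$ at all since there is no $f^*$ string present. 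You have also misplaced $\eta_g$: it is the \emph{first} step of the composite, not something inserted after $\epsilon_h$.

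Beyond this bookkeeping error, "pasting coherence" does not by itself close the argument. The two pastings you are comparing have \emph{different} underlying shapes: the six-step composite contains an internal $g_!$ edge (and its unit/counit triangles) which the four-step composite $\BC_!$ of the total rectangle does not. Coherence of pasting guarantees a well-defined composite for a \emph{single} shape; to compare the two you genuinely need an identity relating them. The correct move is: use the interchange law to reorder the six 2-cells into the form $\eta_f,\ \sigma,\ \eta_g,\ \epsilon_g,\ \tau,\ \epsilon_h$ (this is legitimate because $\eta_f$ and $\sigma$ act on the $f^*$/$u^*$ portion of the string, disjoint from $\eta_g,\tau,\epsilon_h$; and $\epsilon_g$ acts on the $g_!$ produced by $\eta_g$ together with the $g^*$ produced by $\sigma$, disjoint from the $w^*g^*$ that $\tau$ consumes). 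After this reordering, the middle pair is exactly the whiskered snake
\[
g^* \xrightarrow{\eta_g g^*} g^*g_!g^* \xrightarrow{g^*\epsilon_g} g^*,
\]
which is the identity by the triangle identity for $g_!\dashv g^*$, and what remains is $\eta_f,\ \sigma,\ \tau,\ \epsilon_h$, i.e.\ $\BC_!$ of the total rectangle. Note that the $g^*$ inserted by $\eta_g$ is the one that $\tau$ eats, while the $g_!$ from $\eta_g$ meets the $g^*$ from $\sigma$ in $\epsilon_g$; your description has these roles confused, which is why your insertion formula fails to typecheck.

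So the idea is sound and recoverable, but as written the proof contains a genuine error in the unfolding and an overreach in appealing to pasting coherence as a substitute for the interchange-law bookkeeping.
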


	Using this, we can establish one half of a `homotopy invariance' property of Beck-Chevalley transformations:

	\begin{corollary}\label{cor:BC-htpy-inv}
		Assume once more we are given a diagram $(\ref{diag:to-mate})$ whose vertical maps admit left adjoints, and assume we are further given equivalences of $1$-morphisms $f^{\prime*}\simeq f^*$ and $g^{\prime*}\simeq g^*$ (in particular, $f^{\prime*}$ and $g^{\prime*}$ again admit left adjoints). Then the Beck-Chevalley transformations fit into a commutative diagram
		\begin{equation*}
			\begin{tikzcd}
				g_!u^*\arrow[d, "\simeq"']\arrow[r, "\BC_!"] & v^*f_!\arrow[d, "\simeq"]\\
				g'_!u^*\arrow[r, "\BC_!"'] & v^*f'_!
			\end{tikzcd}
		\end{equation*}
		where the vertical equivalences are induced by the given equivalences between the right adjoints.
		\begin{proof}
			Applying the previous lemma iteratively to
			\begin{equation*}
				\begin{tikzcd}
					A\arrow[r, "\id"]\arrow[dr, Rightarrow,"\simeq"{description}] & A\arrow[r, "u^*"]\arrow[dr, Rightarrow, "\sigma"{description}] & B\arrow[r, "\id"]\arrow[dr,"\simeq"{description}, Rightarrow] & B\\
					C\arrow[u, "f^{\prime*}"]\arrow[r, "\id"'] & C\arrow[u, "f^*"]\arrow[r, "v^*"'] & D\arrow[u, "g^*"']\arrow[r,"\id"'] & D\arrow[u, "g^{\prime*}"']
				\end{tikzcd}
			\end{equation*}
			immediately yields the claim.
		\end{proof}
	\end{corollary}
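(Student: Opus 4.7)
My approach will be to reduce to Lemma~\ref{lemma:BC-compose} by a horizontal pasting trick. The idea is to embed the primed square into a pasting of three horizontally adjacent squares: a trivial square on the left whose $2$-cell is the given equivalence $f^{\prime*}\simeq f^*$, the original square $\sigma$ in the middle, and a trivial square on the right whose $2$-cell is the equivalence $g^*\simeq g^{\prime*}$. The total rectangle of this pasting has top row $u^*$, bottom row $v^*$, left column $f^{\prime*}$, right column $g^{\prime*}$, and its pasted $2$-cell equals (up to the canonical identifications absorbing the identity $1$-cells) the $2$-cell used to define $\BC_!$ for the primed square.

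The first step is a short preparatory observation: for a trivial square in which both horizontal $1$-cells are $\id$ and whose $2$-cell is an equivalence, the Beck--Chevalley transformation coincides, after collapsing the canonical identifications with identity $1$-cells, with the induced equivalence between the two left adjoints. This falls out directly from the definition, since the intervening unit and counit in the zig-zag for $\BC_!$ are simply the unit and counit of a single adjunction composed with the equivalence $2$-cell.

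For the second step I would apply Lemma~\ref{lemma:BC-compose} twice to identify the Beck--Chevalley map of the total rectangle with the three-fold composite
\[
g^{\prime}_{!}\,u^*\xrightarrow{\;\simeq\;} g_{!}\,u^*\xrightarrow{\;\BC_!(\sigma)\;} v^*\,f_{!}\xrightarrow{\;\simeq\;} v^*\,f^{\prime}_{!},
\]
where the two outer equivalences are the induced equivalences on left adjoints, thanks to the first step. Since the Beck--Chevalley map of the total rectangle is (by construction of the induced $2$-cell on the primed square) exactly $\BC_!$ for the primed square, reading this equality as the equality of the two composites around the square in the statement yields the claimed commutativity.

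The only potentially fiddly point is bookkeeping: one must keep track of the directions of the induced equivalences between left adjoints and verify that the canonical identifications between a $1$-morphism and its composite with an identity are handled coherently. This is however purely routine once the trivial-square computation in the first step is carried out carefully, so I do not expect any real obstacle.
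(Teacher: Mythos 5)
Your proof is correct and matches the paper's argument exactly: the paper likewise pads the square with a trivial identity square on each side carrying the given equivalences, then applies Lemma~\ref{lemma:BC-compose} twice. The preparatory observation you spell out (that $\BC_!$ of a trivial square with an equivalence $2$-cell is the induced equivalence of left adjoints) is precisely what the paper's terse "immediately yields the claim" leaves implicit.
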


	We can also compose Beck-Chevalley transformations vertically:

	\begin{lemma}
		Assume that all vertical maps in the diagram
		\begin{equation*}
			\begin{tikzcd}
				A\arrow[r, "u^*"]\arrow[dr, Rightarrow, "\sigma"{description}] & B\\
				C\arrow[u, "f^*"]\arrow[dr, Rightarrow, "\tau"{description}]\arrow[r, "v^*"{description}] & D\arrow[u, "g^*"']\\
				E\arrow[r, "w^*"']\arrow[u, "h^*"] & F\arrow[u, "k^*"']
			\end{tikzcd}
		\end{equation*}
		admit left adjoints. Then the composite
		\begin{equation*}
			\begin{tikzcd}
				k_!g_!u^*\arrow[r, "\BC_!(\sigma)"] &[1em] k_!v^*f_!\arrow[r, "\BC_!(\tau)"] &[1em] w^*h_!f_!
			\end{tikzcd}
		\end{equation*}
		agrees with the Beck-Chevalley map for the total rectangle, when we take the left adjoints of $f^*h^*$ and $g^*k^*$ to be $h_!f_!$ and $k_!g_!$, respectively, with the induced unit and counit.\qed
	\end{lemma}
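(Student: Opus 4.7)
The plan is to expand both composites as explicit pasting diagrams in $\textsf{U}$ and to identify them using the interchange law of the $2$-category; no triangle identity is actually required here.

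The starting point is to substitute the decompositions $\eta_{h_!f_!}=(f^*\eta_hf_!)\circ\eta_f$ and $\epsilon_{h_!f_!}=\epsilon_h\circ(h_!\epsilon_fh^*)$ of the unit and counit of the composite adjunction $h_!f_!\dashv f^*h^*$ (and similarly for $k_!g_!\dashv g^*k^*$) into the definition of $\BC_!(\sigma*\tau)$. This exhibits the Beck-Chevalley map of the total rectangle as a pasting of the six $2$-cells $\eta_f$, $\eta_h$, $\sigma$, $\tau$, $\epsilon_g$, $\epsilon_k$ applied in this order with appropriate whiskerings. Expanding the two factors of the iterated composite analogously, one finds the same six $2$-cells, but reordered to $\eta_f$, $\sigma$, $\epsilon_g$, $\eta_h$, $\tau$, $\epsilon_k$: the factor $\BC_!(\sigma)$ contributes the prefix $\eta_f,\sigma,\epsilon_g$ and $\BC_!(\tau)$ contributes the suffix $\eta_h,\tau,\epsilon_k$.

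The main step is then to show that one can pass from the first order to the second by three adjacent swaps, each of which is an instance of the interchange law: first swap $\eta_h$ with $\sigma$, then $\tau$ with $\epsilon_g$ in the resulting expression, and finally $\eta_h$ with $\epsilon_g$. In each case the two $2$-cells are whiskered along disjoint strands of the pasting---roughly the ``$f,h$-side'' versus the ``$g,k$-side'' of the middle row, or the left versus the right end of the expression---so that the swap holds purely by interchange. The main obstacle I anticipate is simply the bookkeeping needed to certify that each whiskering is of this form; in practice this is routine. Alternatively, since the statement is precisely the functoriality of the mate correspondence with respect to vertical pasting of squares, one may invoke \cite{kelly-street}*{Proposition~2.2} directly, just as was done for the horizontal counterpart in \Cref{lemma:BC-compose}.
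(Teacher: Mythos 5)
Your argument is correct. The paper itself offers no proof body for this lemma: immediately before stating it, the authors declare that all the remaining statements in the appendix ``are instances of \cite{kelly-street}*{Proposition~2.2}, and they are explicitly verified in \emph{loc.~cit.}'', and the lemma is then stated with a bare \textup{\qedsymbol}. So your fallback --- invoking \cite{kelly-street}*{Proposition~2.2} directly, in parallel with Lemma~\ref{lemma:BC-compose} --- reproduces the paper's route exactly. Your main argument is a genuine (and slightly more informative) direct verification. Unwinding the definitions, the mate of the vertically pasted square is
\begin{equation*}
k_!g_!u^*\xrightarrow{\eta_f}k_!g_!u^*f^*f_!\xrightarrow{\eta_h}k_!g_!u^*f^*h^*h_!f_!\xrightarrow{\sigma}k_!g_!g^*v^*h^*h_!f_!\xrightarrow{\tau}k_!g_!g^*k^*w^*h_!f_!\xrightarrow{\epsilon_g}k_!k^*w^*h_!f_!\xrightarrow{\epsilon_k}w^*h_!f_!,
\end{equation*}
while $\BC_!(\tau)\circ\BC_!(\sigma)$ is the same six $2$-cells in the order $\eta_f,\sigma,\epsilon_g,\eta_h,\tau,\epsilon_k$. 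Your three interchange swaps ($\eta_h\leftrightarrow\sigma$, then $\tau\leftrightarrow\epsilon_g$, then $\eta_h\leftrightarrow\epsilon_g$) are each applied to $2$-cells whiskered along disjoint strands, so each is a legitimate instance of the interchange law, and they carry one ordering to the other. Notably, your observation that no triangle identity is needed here is both correct and a real point of contrast with the \emph{horizontal} composition of Lemma~\ref{lemma:BC-compose}, where the spurious $\eta_g,\epsilon_g$ pair has to be cancelled by a triangle identity. The paper glosses over this distinction by deferring to \cite{kelly-street}; your direct proof makes it visible.
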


	As in Corollary~\ref{cor:BC-htpy-inv} (or by direct inspection), we in particular conclude:

	\begin{corollary}\label{cor:BC-htpy-inv-vert}
		Assume once more we are given a diagram $(\ref{diag:to-mate})$ whose vertical maps admit left adjoints, and assume we are further given equivalences of $1$-morphisms $u^{\prime*}\simeq u^*$ and $v^{\prime*}\simeq v^*$. Then we have a commutative diagram
		\begin{equation*}
			\begin{tikzcd}[baseline=(X.base)]
				g_!u^*\arrow[r, "\BC_!"]\arrow[d, "\simeq"'] & v^*f_!\arrow[d, "\simeq"]\\
				g_!u^{\prime*}\arrow[r, "\BC_!"'] & |[alias=X]| v^{\prime*}f_!\rlap.
			\end{tikzcd}\qednow
		\end{equation*}
	\end{corollary}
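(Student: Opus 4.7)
The plan is to mirror the proof of Corollary~\ref{cor:BC-htpy-inv}, but using vertical composition of squares in place of horizontal composition. Specifically, I would paste two trivial squares---one above and one below the given square $(\ref{diag:to-mate})$---whose left and right $1$-morphisms are identities and whose $2$-cells are the given equivalences $u^* \simeq u^{\prime*}$ and $v^* \simeq v^{\prime*}$. Because the vertical maps in these trivial squares are identities, their associated Beck-Chevalley transformations degenerate (via the triangle identities for the identity adjunctions) to the given equivalences themselves; explicitly, $\BC_!$ of each trivial square is just its $2$-cell.

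Iterated application of the vertical composition lemma stated immediately before the corollary then identifies $\BC_!$ of the full three-square pasting with the composite
\[
	g_! u^{\prime*} \xrightarrow{\;\simeq\;} g_! u^* \xrightarrow{\BC_!(\sigma)} v^* f_! \xrightarrow{\;\simeq\;} v^{\prime*} f_!.
\]
On the other hand, by construction the $2$-cell of the total rectangle is the transport $\sigma'$ of $\sigma$ along the given equivalences; with the implicit convention that $\BC_!$ in the bottom row of the statement refers to $\BC_!(\sigma')$, the displayed composite is literally the bottom-left composite in the claimed square. Precomposing the displayed formula with the inverse equivalence $g_! u^* \xrightarrow{\simeq} g_! u^{\prime*}$ cancels the first arrow and yields the desired commutativity.

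There is no substantive obstacle here---the argument is a direct transcription of the proof of Corollary~\ref{cor:BC-htpy-inv} with the roles of horizontal and vertical pasting interchanged. The only thing to be careful about is bookkeeping of $2$-cell orientations, so that the directions of the BC-induced equivalences match those of the vertical arrows in the stated diagram; this is routine once one notes that for a square with identity sides the definition of $\BC_!$ reduces literally to the $2$-cell.
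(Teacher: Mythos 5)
Your argument is correct and is precisely the argument the paper has in mind: it gives no written-out proof and simply says ``As in Corollary~\ref{cor:BC-htpy-inv} (or by direct inspection),'' meaning exactly the transcription of that proof with the vertical composition lemma in place of the horizontal one, which is what you carry out.
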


	\bibliographystyle{amsalpha}
	\bibliography{reference}
\end{document}